\documentclass[10.5pt,letterpaper]{amsart}

\usepackage[letterpaper,margin=1.2in]{geometry}

\usepackage{amsmath,amssymb,amsthm}
\usepackage{parskip}
\usepackage{enumerate}
\usepackage[retainorgcmds]{IEEEtrantools}
\usepackage{hyperref}
\usepackage{mathtools,pifont}
\usepackage{tikz-cd}
\usepackage{tikz}
\usepackage{mathrsfs}
\usepackage{xcolor}

\newtheorem{theorem}{Theorem}[section]

\newtheorem{lemma}[theorem]{Lemma}
\newtheorem{proposition}[theorem]{Proposition}
\newtheorem{corollary}[theorem]{Corollary}
\newtheorem{fact}[theorem]{Fact}

\newtheorem*{theorem*}{Theorem}

\numberwithin{equation}{section}

\theoremstyle{definition}
\newtheorem{definition}[theorem]{Definition}
\newtheorem{notation}[theorem]{Notation}

\theoremstyle{remark}
\newtheorem{remark}[theorem]{Remark}

\newcommand\R{\mathbb{R}}
\newcommand\C{\mathbb{C}}

\newcommand\N{\mathbb{N}}

\newcommand\E{\mathbb{E}}

\newcommand{\cB}{\mathcal{B}}
\newcommand{\cM}{\mathcal{M}}
\newcommand{\cN}{\mathcal{N}}
\newcommand{\cO}{\mathcal{O}}

\newcommand{\cU}{\mathcal{U}}

\newcommand{\bX}{\mathbf{X}}
\newcommand{\bY}{\mathbf{Y}}
\newcommand{\bZ}{\mathbf{Z}}
\newcommand{\bS}{\mathbf{S}}
\newcommand{\bW}{\mathbf{W}}
\newcommand{\bR}{\mathbf{R}}

\newcommand{\scal}[1]{\left\langle #1 \right\rangle}

\DeclareMathOperator{\Tr}{Tr}
\DeclareMathOperator{\tr}{tr}

\DeclareMathOperator{\sa}{sa}
\DeclareMathOperator{\op}{op}

\DeclarePairedDelimiter{\norm}{\lVert}{\rVert}
\DeclarePairedDelimiter{\ip}{\langle}{\rangle}

\title{An Elementary Proof of the Inequality $\chi \leq \chi^*$ for Conditional Free Entropy}
\author{David Jekel and Jennifer Pi}

\address{Department of Mathematics\\University of California, Irvine, 410 Rowland Hall (Bldg.\# 400),
	Irvine, CA 92697-3875}
\email{jspi@math.uci.edu}
\urladdr{https://sites.uci.edu/jpi314/}

\address{Department of Mathematics\\University of California, San Diego, 9500 Gilman Dr,
	La Jolla, CA 92093}
\email{djekel@ucsd.edu}
\urladdr{https://davidjekel.com/}

\date{}

\begin{document}
	
	\begin{abstract}
		Through the study of large deviations theory for matrix Brownian motion, Biane-Capitaine-Guionnet proved the inequality $\chi(X) \leq \chi^*(X)$ that relates two analogs of entropy in free probability defined by Voiculescu. We give a new proof of $\chi \leq \chi^*$ that is elementary in the sense that it does not rely on stochastic differential equations and large deviation theory.  Moreover, we generalize the result to conditional microstates and non-microstates free entropy.
	\end{abstract}
	
	\maketitle
	
	
	
	\section{Introduction}
	
	Voiculescu developed free probability as a non-commutative analog to classical probability that deals with free products rather than tensor products, and subsequent connections to operator algebras and random matrices have been numerous (see, e.g. \cite[Chapter 4 \& 6]{MingoSpeicher}).  While pursuing the analogy between classical and free probability, Voiculescu developed several candidates for the analog of (continuous) entropy for non-commutative random variables $\bX$ \cite{VoiculescuFE1,VoiculescuFE2,VoiculescuFE5}. The first candidate is \textit{microstates free entropy}, $\chi(\bX)$, which takes its inspiration from Boltzmann's characterization of entropy via the counting of microstates that may make up a single macrostate \cite{Voiculescu2002}. The second candidate, \textit{non-microstates free entropy}, $\chi^*(\bX)$, is developed in direct analog to the classical relationship between entropy and Fisher information \cite{VoiculescuFE5}.  Microstates free entropy has spawned several variants (such as the $1$-bounded entropy of Jung \cite{Jung2007S1B} and Hayes \cite{Hayes2018}) which have had numerous applications to von Neumann algebras, see e.g.\ \cite{CIKE,DykemaFreeEntropy, GePrime, PopaGeThin, geshen, HJK2022,HJNS2021,Shlyakhtenko2002,Shlyakhtenko2015, VoiculescuFE3}.
	
	A major open question in the study of free entropy is the unification problem: are the microstates and non-microstates notions of entropy the same?  The recently announced resolution of the Connes embedding problem (see \cite{Goldbring2021connes, MIP*eqsRE}) implies that there are non-commutative random variables $X$ with $\chi(X) = -\infty$ and $\chi^*(X) > -\infty$, but the unification problem is still largely open in the case of Connes-embeddable non-commutative random variables (i.e.\ those that admit \emph{any} matrix approximations).  Biane-Capitaine-Guionnet \cite{BCG2003} proved in 2003 that the microstates free entropy is always bounded above by the non-microstates free entropy as a consequence of their study of large-deviation principles for matrix Brownian motion.  In 2017, Dabrowski showed that $\chi$ and $\chi^*$ agree for free Gibbs laws with convex potentials that satisfy certain regularity and growth conditions \cite{Dabrowski2017}.  Another proof of this result was given by the first author in \cite{JekelEntropy}, which was more elementary in the sense that it did not rely as much on stochastic differential equations and ultraproducts.
	
	In this paper, we will revisit the $\chi \leq \chi^*$ result of Biane, Capitaine, and Guionnet.  Their work showed a lot more than simply $\chi \leq \chi^*$, since they produced a large deviation upper bound for matrix Brownian motion in the large $N$ limit.  However, as the intuitive sketch by Shlyakhtenko suggests \cite[\S 4.6]{ShlyakhtenkoParkCity} and we will prove, the result $\chi \leq \chi^*$ does not require this machinery.  Rather, in the spirit of \cite{JekelEntropy}, we make the analogy between Fisher information in classical entropy and free Fisher information in free entropy into precise limiting results.  A key observation from \cite[Proposition B.7]{ST2022} connecting free and classical entropy is that microstates free entropy $\chi(\bX)$ can be realized as the supremum of subsequential limits of normalized classical entropy $h^{(n)}(\bX^{(n)})$ of all random matrix models $\bX^{(n)}$ for $\bX$ satisfying certain conditions (this is also reminiscent of \cite{BD2013}).  In this paper, we will give a similar estimate relating free Fisher information and the large-$n$ limit of classical Fisher information, and with the combination of these tools we will show $\chi \leq \chi^*$.
	
	Furthermore, we generalize this to the conditional setting, comparing Voiculescu's non-microstates entropy of $\bX$ conditioned on a subalgebra $\cB$ from \cite{VoiculescuFE5} with Shlyakhtenko's free microstates entropy for $\bX$ conditioned on $\cB$, called $\chi(\bX \mid \cB)$ \cite{Shlyakhtenko2002}.  Similar to the non-conditional case, one key ingredient is that Shlyakhtenko's conditional microstates entropy can analogously be realized as the supremum of subsequential limits of normalized conditional classical entropy $h^{(n)}(\bX^{(n)} \mid \bY^{(n)})$ over certain random matrix models $(\bX^{(n)},\bY^{(n)})$ for $(\bX,\bY)$, where $\bY$ is a tuple of generators for $\cB$ (see Theorem \ref{thm: rand mx conditional entropy 2}).
	
	Our main results are as follows.  We phrase some of our results in terms of ultrafilter versions of $\chi$ in order to highlight the connection between entropy and embeddings into ultraproducts (see \S \ref{sec: conditional microstates}) as well as to avoid having to pass to subsequences. For an $m$-tuple $\bX$ in a tracial von Neumann algebra and a subalgebra $\cB$ generated by a tuple $\bY$, we show that
	\begin{itemize}
		\item The conditional microstates free entropy $\chi^{\cU}(\bX \mid \cB)$ is the supremum of the ultralimits of the normalized classical conditional entropy $h^{(n)}(\bX^{(n)} \mid \bY^{(n)})$ over random matrix models $(\bX^{(n)},\bY^{(n)})$ of $(\bX,\bY)$ satisfying certain growth bounds. (Theorem \ref{thm: rand mx conditional entropy 2})
		\item Shlyakhtenko's conditional entropy $\overline{\chi}(\mathbf{X} \mid \cB)$ is the supremum of $\chi^{\cU}(\mathbf{X} \mid \mathcal{B})$ over choices of ultrafilter $\cU$ (Lemma \ref{lem: Shl-sup}).
		\item Given a sequence of deterministic microstates for $\bY$, the conditional free Fisher information is bounded above by a limit of the classical Fisher information of any sequence of microstates $\bX^{(n)}$ for $\bX$ satisfying some moment boundedness conditions. (Proposition \ref{prop: cond Phi* upper bound})
		\item We prove that $\overline{\chi}(\bX \mid \cB) \leq \chi^*(\bX \mid \cB)$, where $\chi^*(\bX: \cB)$ is Voiculescu's non-microstates free entropy relative to $\cB$. (Theorem \ref{thm: cond main}).
	\end{itemize}
	The paper is organized as follows:
	\begin{itemize}
		\item \S \ref{sec: prelim} reviews background on operator algebras, free probability, and ultraproducts.
		\item \S \ref{sec: classical entropy} explains classical entropy and Fisher information as background and motivation.
		\item \S \ref{sec: conditional microstates} describes several versions of conditional microstates entropy and establishes their relationships with each other and with classical conditional entropy.
		\item \S \ref{sec: conditional non-microstates} reviews Voiculescu's conditional free Fisher information, and gives a characterization of it that does not explicitly reference the free score functions, which will be used for the main proof.
		\item \S \ref{sec: main proof} provides the proof of the main theorm : for any tracial von Neumann algebra $(\cM, \tau)$, any $\mathrm{W}^*$-subalgebra $\cB$, and any $m$-tuple of self-adjoint random variables $\bX$ from $\cM$, we have $\overline{\chi}(\bX \mid \cB) \leq \chi^*(\bX: \cB)$.
	\end{itemize}
	
	\subsection*{Acknowledgements}
	
	DJ was supported by an NSF postdoctoral fellowship (DMS-2002826), the Fields Institute, and Ilijas Farah's NSERC grant.
	
	JP was partially supported by the NSF, grant DMS-2054477.
	
	DJ thanks Dima Shlyakhtenko and Yoann Dabrowski for past discussions about free entropy.  We also thank the referee for numerous comments that improved the correctness and clarity of the paper.
	
	\section{Background on operator algebras and free probability} \label{sec: prelim}
	
	\subsection{Tracial von Neumann algebras}
	
	We assume some familiarity with tracial von Neumann algebras; some standard references for background on these objects are \cite{AP2017, KadisonRingroseI, Sakai1971, TakesakiI}. Concise introductions can be found in \cite{Ioana2023} and \cite[\S 2]{GJNS2022}.  We record some basic definitions and notations in this section for usage in this paper.
	
	First, we recall the notion of $\mathrm{C}^*$-algebras.
	\begin{enumerate}[(1)]
		\item A (unital) $*$-algebra is an algebra $A$ equipped with a conjugate linear involution $*$ such that $(ab)^* = b^* a^*$.
		\item A unital $\mathrm{C}^*$-algebra is a $*$-algebra $A$ equipped with a complete norm $\norm{\cdot}$ such that $\norm{ab} \leq \norm{a} \norm{b}$ and $\norm{a^*a} = \norm{a}^2$ for $a, b \in A$.
	\end{enumerate}
	
	A collection of fundamental results in $\mathrm{C}^*$-algebra theory establishes that $\mathrm{C}^*$-algebras can always be represented as algebras of operators on Hilbert spaces.  If $H$ is a Hilbert space, the algebra of bounded operators $B(H)$ is a $\mathrm{C}^*$-algebra.  Conversely, every unital $\mathrm{C}^*$-algebra can be embedded into $B(H)$ by some unital and isometric $*$-homomorphism $\rho$.  
	
	A \emph{von Neumann algebra} $\cM$ is a unital $\mathrm{C}^*$-algebra represented on some $B(H)$ that is also closed in the strong operator topology (equivalently in the weak operator topology) coming from $B(H)$.
	If $\cM$ has a linear functional $\tau: \cM \rightarrow \C$ which is:
	\begin{itemize}
		\item positive: $\tau(X^*X) \geq 0$ for all $X \in \cM$;
		\item unital: $\tau(1) = 1$;
		\item faithful: $\tau(X^*X) = 0$ if and only if $X = 0$;
		\item normal: $\tau: \cM \rightarrow \C$ is continuous in the weak$-\ast$ toplogy;
		\item tracial: $\tau(XY) = \tau(YX)$ for all $X, Y \in \cM$,
	\end{itemize}
	then we call $\tau$ a trace on $\cM$. We say that $(\cM, \tau)$ is a tracial von Neumann algebra. Also, when the trace is implicit or clear, we often drop $\tau$ from the notation and just refer to $\cM$ as a tracial von Neumann algebra.  We write $\cM_{\sa}$ for the collection of self-adjoint elements of $\cM$.

	For an element $X \in \cM$, we use the $2$-norm from von Neumann algebras: $\norm{X}_2 = \tau(X^*X)^{1/2}$. 
	Note in particular that for a matrix $X \in M_n(\C)$, this means
	\[
	\norm{X}_2 = \tr_n(X^*X)^{1/2} = \left( \frac{1}{n} \sum_{i,j=1}^n |X_{i,j}|^2 \right)^{1/2}.
	\]
	More generally, for $p \in [1,\infty)$ the non-commutative $p$-norm is given by
	\[
	\norm{X}_p = \tau(|X|^p)^{1/p},
	\]
	where $|X| = (X^*X)^{1/2}$ and $|X|^p$ is defined by continuous functional calculus.  We write ${\norm{X}_\infty = \norm{X}_{\op}}$ for the operator norm of $X \in \cM \subseteq \mathcal{B}(H)$.  These non-commutative $p$-norms satisfy an analog of H\"older's inequality; for proof, see \cite[Theorem 6]{Dixmier1953}, \cite[Theorem 2.1.5]{daSilva2018}.
	
	\begin{fact}[Non-commutative H\"older's inequality] \label{fact: Holder}
		Let $\cM$ be a tracial von Neumann algebra and let $X_1$, \dots, $X_k \in \cM$.  Let $p, p_1$, \dots, $p_k \in [1,\infty]$ with $1/p = 1/p_1 + \dots + 1/p_k$.  Then
		\[
		\norm{X_1 \cdots X_k}_p \leq \norm{X_1}_{p_1} \dots \norm{X_k}_{p_k}.
		\]
	\end{fact}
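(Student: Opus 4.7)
The plan is to reduce to the two-variable case $\norm{X_1 X_2}_p \leq \norm{X_1}_{p_1} \norm{X_2}_{p_2}$ (with $1/p = 1/p_1 + 1/p_2$) and then establish that via complex interpolation. The reduction is by induction on $k$: assuming the bound for $k-1$ factors, choose $q \in [1,\infty]$ with $1/q = 1/p_2 + \dots + 1/p_k$ and apply the two-variable inequality to the split $X_1 \cdot (X_2 \cdots X_k)$, then invoke the inductive hypothesis on $X_2 \cdots X_k$.

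For the two-variable case, I would first dispose of the endpoint situations where one of $p_1, p_2$ equals $\infty$. The inequality $\norm{XY}_p \leq \norm{X}_\infty \norm{Y}_p$ follows from the operator inequality $(XY)^*(XY) \leq \norm{X}_\infty^2\, Y^*Y$ together with functional-calculus monotonicity applied to an appropriate power; the symmetric estimate with $\norm{X}_p \norm{Y}_\infty$ follows via an adjoint and trace argument. A uniform alternative is to use the Fack--Kosaki theory of generalized singular values $\mu_t(\cdot)$ and reduce to classical H\"older for the corresponding scalar measures.

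Given the endpoints, I would proceed by complex interpolation. Normalize $\norm{X_1}_{p_1} = \norm{X_2}_{p_2} = 1$, and using the duality $\norm{W}_p = \sup\{|\tau(WZ)| : \norm{Z}_q \leq 1\}$ (where $1/p + 1/q = 1$), reduce to showing $|\tau(X_1 X_2 Z)| \leq 1$ for every such $Z$. Take polar decompositions $X_i = U_i |X_i|$ and $Z = V|Z|$, replace $|X_i|$ and $|Z|$ by $|X_i| + \varepsilon$ and $|Z| + \varepsilon$ to ensure invertibility, and define
\[
F(z) = \tau\bigl(U_1 |X_1|^{p_1 \mu_1(z)} U_2 |X_2|^{p_2 \mu_2(z)} V |Z|^{q \mu_3(z)}\bigr)
\]
on the strip $0 \leq \re z \leq 1$, where $\mu_1, \mu_2, \mu_3$ are affine functions of $z$ chosen so that on each boundary line the integrand factors into pieces whose norms are controlled by the endpoint estimates together with the trace inequality $|\tau(AB)| \leq \norm{A}_1 \norm{B}_\infty$, while at a specific interior point $z_0$ one has $F(z_0) = \tau(X_1 X_2 Z)$. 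The Hadamard three-lines lemma then yields $|F(z_0)| \leq 1$, and letting $\varepsilon \to 0$ concludes the proof.

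The main obstacle is coordinating the exponent functions $\mu_1, \mu_2, \mu_3$ so that three objectives are simultaneously met: the boundary evaluations are bounded by $1$ via the endpoint cases, the interior point $z_0$ reproduces $\tau(X_1 X_2 Z)$, and the constraint $1/p_1 + 1/p_2 + 1/q = 1$ is respected. This amounts to solving a small linear system, after which the analyticity, boundedness, and continuity of $F$ on the strip are routine consequences of holomorphic functional calculus for the invertible positive operators $|X_i| + \varepsilon$ and $|Z| + \varepsilon$.
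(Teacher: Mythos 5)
A preliminary remark on comparison: the paper does not prove this Fact at all — it records it as known and cites Dixmier and da Silva — so there is no in-paper argument to match; your scaffolding (induction to the two-factor case, the endpoint estimate via $(XY)^*(XY) \leq \norm{X}_\infty^2\, Y^*Y$ and trace monotonicity, the attainment direction of the duality $\norm{W}_p = \sup\{|\tau(WZ)| : \norm{Z}_q \leq 1\}$, and analyticity of $z \mapsto (|X|+\varepsilon)^{\mu(z)}$) is the classical route and is fine as far as it goes.

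The genuine gap is in the interpolation bookkeeping, precisely at the step you defer to a ``small linear system.'' After dualizing, you must bound the trilinear form $|\tau(X_1 X_2 Z)|$ with weights $(1/p_1, 1/p_2, 1/q)$ summing to $1$, and in the generic case all three are strictly positive. With a single complex variable and affine exponents, the real parts of $(\mu_1,\mu_2,\mu_3)$ at the interior point $z_0$ are forced to be a convex combination of their values on the two boundary lines $\re z = 0$ and $\re z = 1$. But the only boundary configurations you can control using just the endpoint cases and $|\tau(AB)| \leq \norm{A}_1 \norm{B}_\infty$ are those where one factor carries full weight $1$ and the other two have real part $0$ (so that after $\varepsilon$-regularization they contribute contractions): any boundary configuration with two strictly positive weights already requires the conjugate-exponent H\"older inequality you are in the middle of proving, which is circular. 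Consequently the reachable interior weight triples lie on an edge of the simplex $\{s_1+s_2+s_3=1,\ s_i \geq 0\}$, i.e.\ one exponent must be infinite, whereas the target $(1/p_1,1/p_2,1/q)$ is an interior point. So the linear system you invoke has no admissible solution, and the single-pass three-lines argument fails exactly where the work lies. The standard repair is a two-stage (equivalently, two-variable or iterated) interpolation: first prove the bilinear conjugate-exponent bound $|\tau(AB)| \leq \norm{A}_r \norm{B}_{r'}$ by three lines between the vertex configurations $(1,0)$ and $(0,1)$; then run a second three-lines argument for the trilinear form whose boundary configurations sit on edges of the simplex and are controlled by that bilinear case. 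Alternatively, the Fack--Kosaki route you mention only for the endpoints in fact proves the whole inequality: the submajorization $\int_0^t \mu_s(XY)\,ds \leq \int_0^t \mu_s(X)\mu_s(Y)\,ds$ together with classical H\"older for the singular-value functions gives the general statement directly, provided you are willing to cite that submajorization theorem as a black box.
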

	
	Given $\cM$ a tracial von Neumann algebra, we can form a Hilbert space $L^2(\cM)$ by taking the completion of $\cM$ with respect to the inner product $\scal{x,y}_\tau = \tau(x^*y)$, which gives rise to the norm $\norm{\cdot}_2$. Moreover, there is a $*$-homomorphism $\cM \to B(L^2(\cM))$ which maps $x$ to the operator of left multiplication by $x$.  For further information on this space, we refer the reader to \cite[\S 2.6]{AP2017}.
	
	If $\cM$ is a tracial von Neumann algebra and $\mathbf{X} = (X_i)_{i \in I}$ is a self-adjoint tuple in $\cM$, we denote by $\mathrm{W}^*(\mathbf{X})$ the smallest von Neumann subalgebra of $\cM$ that contains $\mathbf{X}$.  This is called the \emph{von Neumann subalgebra generated by $\mathbf{X}$}.  It is easy to see that $\mathrm{W}^*(\mathbf{X})$ is the closure in the weak-operator topology of the $*$-algebra generated by $\mathbf{X}$.  In fact, every element in $\mathrm{W}^*(\mathbf{X})$ can be approximated by $*$-polynomials in $\mathbf{X}$ in a rather strong sense, a fact we will need to use for Lemma \ref{lemma: cond microstates f.e. embedding}.
	
	Let $\C \scal{x_i : i \in I}$ be the $*$-algebra of non-commutative polynomials in formal self-adjoint variables $x_i$ for $i$ in the index set $I$.  Note that for every tracial von Neumann algebra $\cM$ and self-adjoint $\mathbf{X} = (X_i)_{i \in I}$ in $\cM$, there is a unique $*$-homomorphism $\C \scal{x_i: i \in I} \to \cM$ given by $x_i \mapsto X_i$, which we call \emph{evaluation at $\mathbf{X}$} and denote $p \mapsto p(\mathbf{X})$.
	
	The following lemma is a combination of well-
	known results; for instance, the arguments in \cite[Theorem 2.15]{Shlyakhtenko2002} and \cite[Lemma A.8]{Hayes2018} rely on this result without explaining it in detail.  It is also a key part of the proof of \cite[Theorem 3.30, Proposition 3.32]{JekelCovering}.
	
	\begin{lemma} \label{lem: polynomial approximation}
		Let $\cM$ be a tracial von Neumann algebra, let $\mathbf{R} = (R_i)_{i \in I} \in (0,\infty)^I$, and let $\mathbf{X} = (X_i)_{i \in I}$ a self-adjoint tuple with $\norm{X_i} \leq R_i$.  Let $Z \in \mathrm{W}^*(\mathbf{X})$.  Then for every $\epsilon > 0$, there exists a $p \in \C\ip{x_i: i \in I}$ such that
		\[
		\norm{Z - p(\mathbf{X})}_2 < \epsilon,
		\]
		and for all tracial von Neumann algebras $\cN$ and all $\mathbf{Y}$ with $\norm{Y_i} \leq R_i$, we have
		\[
		\norm{p(\mathbf{Y})}_{\op} \leq \norm{Z}_{\op}.
		\]
	\end{lemma}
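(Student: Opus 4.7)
The plan is to combine a $\|\cdot\|_2$-polynomial approximation with a functional-calculus cutoff that forces the uniform operator-norm bound. First, I use that the polynomial $*$-algebra $\C\langle \bX \rangle$ is $\|\cdot\|_2$-dense in $\mathrm{W}^*(\bX)$---this follows from Kaplansky's theorem together with the fact that SOT convergence on operator-norm-bounded nets implies $\|\cdot\|_2$-convergence in a tracial von Neumann algebra---to find a $*$-polynomial $q$ with $\|q(\bX)-Z\|_2$ as small as desired. Setting $c := \|Z\|_{\op}$, the triangle inequality and submultiplicativity of the operator norm provide a uniform bound $\|q(\bY)\|_{\op} \le M$ for all $\bY$ with $\|Y_i\|\le R_i$, where $M$ depends only on the coefficients of $q$ and on $\bR$; but typically $M \gg c$, so $q$ alone does not meet the conclusion.

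To correct this, I will compose $q$ with a polynomial approximation of a cutoff function. Treating first the self-adjoint case by symmetrizing $q$ if necessary, I consider the monotone $1$-Lipschitz function $f_c(t) := \max(-c, \min(c, t))$. By Stone--Weierstrass there is a polynomial $g \in \R[t]$ with $\|g - f_c\|_{L^\infty[-M,M]} < \delta$; since $\|g\|_{L^\infty[-M,M]} \le c + \delta$, I rescale to $\tilde g := \tfrac{c}{c+\delta} g$, which satisfies $\|\tilde g\|_{L^\infty[-M,M]} \le c$ and $\|\tilde g - f_c\|_{L^\infty[-M,M]} \le 2\delta$. Then $p := \tilde g \circ q$ is a $*$-polynomial in $\bX$, and for any admissible $\bY$ the spectrum of $q(\bY)$ lies in $[-M,M]$, so functional calculus gives $\|p(\bY)\|_{\op} = \|\tilde g(q(\bY))\|_{\op} \le c$. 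The $\|\cdot\|_2$-closeness follows from the triangle inequality and $f_c(Z) = Z$ (since $\|Z\|_{\op} \le c$):
\[
\|p(\bX) - Z\|_2 \le \|(\tilde g - f_c)(q(\bX))\|_2 + \|f_c(q(\bX)) - f_c(Z)\|_2,
\]
bounding the first term by $2\delta$ (via $\|h(A)\|_2 \le \|h\|_{L^\infty(\sigma(A))}$) and the second by $\|q(\bX) - Z\|_2$ via a Lipschitz functional-calculus estimate in $\|\cdot\|_2$.

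The main obstacle is justifying the Lipschitz bound $\|f(A) - f(B)\|_2 \le \mathrm{Lip}(f) \|A-B\|_2$ for self-adjoint $A, B$ in a tracial von Neumann algebra and Lipschitz $f: \R \to \R$. This is a classical fact (proved via double operator integrals and the $L^2$-boundedness of Schur multipliers), and can alternatively be deduced directly for the monotone $1$-Lipschitz $f_c$ using spectral projections. For the general (not necessarily self-adjoint) case, the same strategy works with a polar-decomposition variant: take $p(\bY) := q(\bY)\, g(q(\bY)^* q(\bY))$, where $g$ is a polynomial approximation of $\phi_c(s) := \min(1, c/\sqrt{s})$ on $[0, M^2]$. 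In polar decomposition $p(\bY)$ agrees with $U_\bY \min(|q(\bY)|, c)$ up to a small error, so a rescaling yields $\|p(\bY)\|_{\op} \le c$; meanwhile $\|p(\bX) - Z\|_2 < \epsilon$ follows from $Z \phi_c(Z^*Z) = Z$ combined with Lipschitz bounds on $\phi_c$.
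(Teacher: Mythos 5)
Your route is genuinely different from the paper's. The paper also starts from the Kaplansky density theorem, but it obtains the uniform bound abstractly: it completes $\C\scal{x_i : i \in I}$ in the universal norm $\norm{p}_u = \sup \norm{p(\bY)}_{\op}$ (supremum over all admissible tuples in all tracial von Neumann algebras), and uses that the induced surjective $*$-homomorphism onto $\mathrm{C}^*(\bX)$ admits norm-preserving preimages, so any polynomial close to such a preimage automatically satisfies $\norm{p(\bY)}_{\op} \leq \norm{Z}_{\op}$ in every admissible evaluation. You instead force the bound concretely by post-composing with a polynomial cutoff, at the price of importing the classical fact that Lipschitz functions are $\norm{\cdot}_2$-Lipschitz on self-adjoint elements of a tracial von Neumann algebra, which you correctly identify and which is true. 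Granting that fact, your self-adjoint case is complete: with $q$ symmetrized, $\sigma(q(\bY)) \subseteq [-M,M]$ for every admissible $\bY$, so the rescaled cutoff gives $\norm{p(\bY)}_{\op} \leq \norm{Z}_{\op}$ uniformly, and the quantifiers are in the correct order because $\delta$ is chosen after $M$ while the Lipschitz constant of $f_c$ is $1$, independent of $M$.

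The non-self-adjoint case, however, has a gap in the bookkeeping of constants as written. To get $\norm{p(\bX)-Z}_2 < \epsilon$ from $Z\phi_c(Z^*Z) = Z$, the natural estimate with $q = q(\bX)$ is $\norm{q\phi_c(q^*q) - Z\phi_c(Z^*Z)}_2 \leq \norm{q-Z}_2 + \norm{Z}_{\op}\operatorname{Lip}(\phi_c)\norm{q^*q - Z^*Z}_2$, and $\norm{q^*q - Z^*Z}_2 \leq (\norm{q(\bX)}_{\op} + \norm{Z}_{\op})\norm{q - Z}_2$. The factor $\norm{q(\bX)}_{\op}$ is a priori only bounded by $M$, which depends on $q$, which was chosen after deciding how small $\norm{q(\bX)-Z}_2$ must be --- but that threshold depends on this very factor, so the argument is circular as stated. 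Two repairs are available. (i) Strengthen your first step: take $Z' \in \mathrm{C}^*(\bX)$ with $\norm{Z'}_{\op} \leq \norm{Z}_{\op}$ and $\norm{Z - Z'}_2$ small (Kaplansky), then choose $q$ approximating $Z'$ in \emph{operator} norm (polynomials are operator-norm dense in $\mathrm{C}^*(\bX)$), so that $\norm{q(\bX)}_{\op} \leq \norm{Z}_{\op} + 1$; then every constant in the truncation estimate depends only on $\norm{Z}_{\op}$ and the quantifiers untangle. (ii) Alternatively, avoid the scalar-Lipschitz detour by checking that singular-value truncation $T \mapsto u_T \min(|T|,c)$, where $T = u_T|T|$ is the polar decomposition, is the $\norm{\cdot}_2$-metric projection of $L^2(\cM)$ onto the convex, $\norm{\cdot}_2$-closed set $\{T \in \cM : \norm{T}_{\op} \leq c\}$, hence $1$-Lipschitz in $\norm{\cdot}_2$, which removes the dependence on $\norm{q(\bX)}_{\op}$ altogether. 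With either repair (and taking $p = 0$ in the degenerate case $Z = 0$), your argument closes and gives a correct, more hands-on alternative to the paper's universal-$\mathrm{C}^*$-algebra argument.
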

	
	\begin{proof}
		Let $\mathrm{C}^*(\mathbf{X}) \subseteq \cM$ be the $\mathrm{C}^*$-algebra generated by $\mathbf{X}$, that is, the operator norm closure of the $*$-algebra generated by $\mathbf{X}$. 
		By the Kaplansky density theorem (see for instance \cite[Theorem 5.3.5]{KadisonRingroseI} or \cite{Kaplansky1953}), the ball of radius $\norm{Z}_{\op}$ in $\mathrm{C}^*(\bX)$ generated by $\bX$ is dense in the ball of radius $\norm{Z}_{\op}$ in $\mathrm{W}^*(\bX)$ with respect to the strong operator topology.  Since approximation in the strong operator topology implies approximation in the $2$-norm associated to the trace, it follows that there exists $Z' \in \mathrm{C}^*(\bX)$ such that $\norm{Z'}_{\op} \leq \norm{Z}_{\op}$ and $\norm{Z - Y}_2 < \epsilon / 2$.
		
		Next, we want to approximate $Z'$ by $p(\mathbf{X})$ for some non-commutative polynomial $p$, but the challenge is to guarantee that $\norm{p(\mathbf{Y})}_{\op} \leq \norm{Z}_{\op}$ for $\mathbf{Y}$ in \emph{any} tracial von Neumann algebra $\cM$.  To this end, we will complete the polynomial algebra to a certain ``universal'' $\mathrm{C}^*$-algebra.  For $*$-polynomials $p$ in infinitely many variables $(x_i)_{i \in I}$, let
		\[
		\norm{p}_u = \sup \left\{ \norm{p(\bY)}_{\op}: \cN \text{ tracial von Neumann algebra}, \bY \in \cN_{\sa}^I, \norm{Y_j} \leq R_i \text{ for } i \in I \right\}.
		\]
		This defines a $\mathrm{C}^*$-norm on $\C\ip{x_i, x_i^*: i \in I}$.  Let $A$ be the completion of $\C\ip{x_i, x_i^*: i \in I}$ into a $\mathrm{C}^*$-algebra.  If $\cN$ is a tracial von Neumann algebra and $\bY \in \cN_{\sa}^I$, then $\norm{p(\bY)}_{\op} \leq \norm{p}_u$ by definition, so there is a $*$-homomorphism $\pi: A \to \mathrm{C}^*(\bX)$ mapping $x_i \in A$ to $X_i \in \cM$ for each $i \in I$.  The image of this homomorphism is dense, and therefore it is surjective because the image of a $\mathrm{C}^*$-algebra under a $*$-homomorphism is closed \cite[II.5.1.2]{Blackadar2006}.  Moreover, by \cite[II.5.1.5]{Blackadar2006}, there exists $a \in A$ such that $\pi(a) = Z'$ and $\norm{a}_A = \norm{Z'}_{\op} \leq \norm{Z}_{\op}$.  By definition of $A$, there exists $p \in \C\scal{x_i: i \in I}$ such that $\norm{p - a}_A < \epsilon/2$ and $\norm{p}_A = \norm{p}_u \leq \norm{a}_A \leq \norm{Z}_{\op}$.  It follows that $\norm{p(\bX) - Z'}_2 \leq \norm{p(\bX) - Z'}_{\op} \leq \norm{\pi(p) - \pi(a)}_{\op} < \epsilon / 2$.  Hence, $\norm{p(\bX) - Z}_2 < \epsilon$ and $\norm{p}_u \leq \norm{Z}_{\op}$, as desired.
	\end{proof}
	
	One more fact needed later for Lemma \ref{lemma: cond microstates f.e. embedding} is the following.
	
	\begin{fact} \label{fact: polynomial uniform continuity}
		Let $p \in \C\scal{x_i: i \in I}$, and let $\mathbf{R} \in (0,\infty)^I$.  Let $\epsilon > 0$.  Then there exists a finite $F \subseteq I$ and $\delta > 0$ such that for all tracial von Neumann algebras $\cN$ and all $\bX$, $\bY \in \cM^I$ with $\max(\norm{X_i},\norm{Y_i}) \leq R_i$ for $i \in I$ and $\norm{X_i - Y_i}_2 < \delta$ for $i \in F$, we have $\norm{p(\bX) - p(\bY)}_2 < \epsilon$.
	\end{fact}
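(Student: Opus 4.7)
The plan is a direct computation based on the triangle inequality and the non-commutative H\"older inequality (Fact \ref{fact: Holder}). Since $p$ is a non-commutative polynomial, it depends on only finitely many of the formal variables $x_i$; let $F \subseteq I$ be that finite set, and write $p = \sum_{\alpha} c_\alpha w_\alpha$ as a finite linear combination of monomials $w_\alpha$ in the variables $\{x_i\}_{i \in F}$ (treating $x_i^* = x_i$ since our tuples are self-adjoint, though otherwise one just splits into $x_i$ and $x_i^*$). By the triangle inequality it is enough to handle each monomial separately and then absorb the finitely many coefficients $c_\alpha$ at the end.

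So fix a monomial $w = x_{i_1} x_{i_2} \cdots x_{i_k}$ with $i_1, \ldots, i_k \in F$. The key identity is the telescoping decomposition
\[
X_{i_1} \cdots X_{i_k} - Y_{i_1} \cdots Y_{i_k} = \sum_{j=1}^k X_{i_1} \cdots X_{i_{j-1}} (X_{i_j} - Y_{i_j}) Y_{i_{j+1}} \cdots Y_{i_k}.
\]
Apply the $2$-norm and use Fact \ref{fact: Holder} with exponents $(\infty, \ldots, \infty, 2, \infty, \ldots, \infty)$ to each summand, so that for each $j$
\[
\bigl\| X_{i_1} \cdots X_{i_{j-1}} (X_{i_j} - Y_{i_j}) Y_{i_{j+1}} \cdots Y_{i_k} \bigr\|_2 \leq \Bigl( \prod_{\ell \neq j} R_{i_\ell} \Bigr) \, \norm{X_{i_j} - Y_{i_j}}_2.
\]
Hence $\norm{w(\bX) - w(\bY)}_2 \leq k \bigl( \prod_{\ell=1}^k R_{i_\ell} \bigr) R_{\min}^{-1} \max_{i \in F} \norm{X_i - Y_i}_2$, where $R_{\min} = \min_{i \in F} R_i$ (assuming without loss of generality all $R_i > 0$, which they are by hypothesis); more concretely we just get a bound of the form $C_w \max_{i \in F} \norm{X_i - Y_i}_2$ for a constant $C_w$ depending only on $\mathbf{R}|_F$ and the monomial $w$.

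Summing over the finitely many monomials with their coefficients yields a single constant $C_p = C_p(\mathbf{R}, p)$ such that
\[
\norm{p(\bX) - p(\bY)}_2 \leq C_p \max_{i \in F} \norm{X_i - Y_i}_2
\]
whenever $\max(\norm{X_i},\norm{Y_i}) \leq R_i$ for $i \in F$. Taking $\delta = \epsilon / (C_p + 1)$ gives the desired conclusion. There is no genuine obstacle here; the one small point worth noting is that the bound depends only on $\bR|_F$ and not on the values of $R_i$ for $i \notin F$, so it is legitimate that the conclusion holds uniformly over the (possibly infinite) index set $I$.
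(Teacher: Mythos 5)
Your argument is correct and is exactly the route the paper has in mind: the paper leaves this fact as an exercise, indicating precisely the reduction to monomials together with the non-commutative H\"older and triangle inequalities, which is what your telescoping decomposition carries out. Nothing further is needed.
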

	
	We leave the proof as an exercise.  It suffices to check the claim for monomials, and this can be done using the non-commutative H\"older's inequality and triangle inequality.
	
	\subsection{Non-commutative laws, free independence, and random matrices} \label{subsec: nc laws}
	
	A non-commutative analog of the probability distribution of a random variable is the \emph{non-commutative law} described as follows.
	
	\begin{definition}
		Let $\C \scal{x_i : i \in I}$ be the $*$-algebra of non-commutative polynomials in formal self-adjoint variables $x_i$ for $i$ in the index set $I$ (usually, we take $I$ to be $\{1,\dots,n\}$ or $\N$). For any positive numbers $\mathbf{R} = (R_i)_{i \in I}$, we define the \textit{space of non-commutative laws} in $I$ variables bounded by $\mathbf{R}$, denoted $\Sigma_{I,\mathbf{R}}$, as the space of tracial positive linear functionals
		${\lambda: \C \scal{x_i : i \in I} \rightarrow \C}$ such that for all $\ell \in \N$ and $i_1, \ldots, i_\ell \in I$, we have
		$|\lambda(x_{i_1} \cdots x_{i_\ell})| \leq R_{i_1} \dots R_{i_\ell}$. We equip $\Sigma_{I,\mathbf{R}}$ with the weak-$\ast$ topology, viewed as a subset of the dual of the vector space $\C \scal{x_i : i \in I}$.  Let $\Sigma_I = \bigcup_{\mathbf{R}} \Sigma_{I,\mathbf{R}}$ and equip it also with the weak-$*$ topology.
		
		For a self-adjoint $I$-tuple $\mathbf{X}$ from $\cM$, define the \textit{law} of $X$ as the map $\mu_{\mathbf{X}} : \C \scal{x_i: i \in I} \rightarrow \C$ given by $p(x) \mapsto \tau(p(X))$.
	\end{definition}
	
	\begin{definition}
		Given $\bX = (X_i)_{i \in I}$ a tuple of self-adjoint elements of $(\cM, \tau)$, we say that a sequence of $I$-tuples $\bX^{(n)}$ \textit{converges to $\bX$ in non-commutative law} if $\mu_{\bX^{(n)}} \rightarrow \mu_\bX$. Note that the elements of the sequence $\bX^{(n)}$ are from some tracial $\mathrm{W}^*$-algebras $(\cM^{(n)}, \tau_n)$, but these may all be different from $(\cM, \tau)$.
	\end{definition}
	
	We now describe an analog of independence in the non-commutative setting: \emph{free independence}, which is closely related to free products of von Neumann algebras.  We present some definitions and facts relevant to free independence; for further background, see for instance \cite{VDN1992}, \cite[\S 5]{AGZ2009}, \cite{MingoSpeicher}.
	
	\begin{definition}
		Let $\cM$ be tracial von Neumann algebra, and let $(\cM_i)_{i \in I}$ be a family of von Neumann subalgebras.  We say that $(\cM_i)_{i \in I}$ are \emph{freely independent} if for every $i_1$, \dots, $i_\ell \in I$ with $i_j \neq i_{j+1}$, whenever $a_j \in \cM_{i_j}$, then
		\[
		\tau \left[ (a_1 - \tau(a_1)) \dots (a_\ell - \tau(a_\ell)) \right] = 0.
		\]
		Moreover, if $\mathbf{X}_i$ for $i \in I$ are tuples (each with their own index set) from $\cM$, we say that $\mathbf{X}_i$ are freely independent if the von Neumann subalgebras $\mathrm{W}^*(\mathbf{X}_i)$ are freely independent.
	\end{definition}
	
	Given any $(\cM_i)_{i \in I}$, there exists a \emph{free product} $\cM = *_{i \in I} \cM_i$, that is, a tracial von Neumann algebra $\cM$ containing $\cM_i$'s as subalgebras that are freely independent of each other; see e.g.\ \cite[p. 351-352]{AGZ2009}.  Moreover, if $\mathbf{X}$ and $\mathbf{Y}$ are freely independent self-adjoint tuples, then the non-commutative law of the joint tuple $(\mathbf{X},\mathbf{Y})$ is uniquely determined by those of $\mathbf{X}$ and $\mathbf{Y}$; see e.g.\ \cite[\S 1.12, Proposition 13]{MingoSpeicher}.

	The analog in free probability of a tuple of Gaussian random variables is a \emph{standard free semicircular family}.  We say that $\mathbf{S} = (S_1,\dots,S_m)$ is a \emph{standard free semicircular family} if $S_1$, \dots, $S_m$ are freely independent from each other and each $S_i$ has the standard Wigner semicircular distribution, i.e.,
	\[
	\tau(p(S_i)) = \frac{1}{2\pi} \int_{-2}^2 p(x) \sqrt{4 - x^2}\,dx
	\]
	for every polynomial $p$.
	
	Voiculescu's asymptotic freeness theory shows how free independence arises in the large-$n$ limit from independence of certain random $n \times n$ matrices.  Correspondingly, the free semicircular family arises from certain Gaussian family of random matrices.  Here we use the following notation and terminology.  
	
	\begin{notation} \label{not: matrix space}
		\emph{} 
		\begin{itemize}
			\item Note that $M_n(\C)_{\sa}^m$ can be equipped with a real inner product $\ip{\mathbf{X},\mathbf{Y}}_{\tr_n} = \sum_{j=1}^m \tr_n(X_j^*Y_j)$, where $\tr_n$ is the normalized trace on $M_n(\C)$.
			\item Being a real inner product space of dimension $mn^2$, there is a linear isometry $M_n(\C)_{\sa}^m \to \R^{mn^2}$.  The \emph{Lebesgue measure} on $M_n(\C)_{\sa}^m$ is the measure obtained by transferring the Lebesgue measure on $\R^{n^2}$ by such an isometry (this is independent of the choice of isometry by invariance of Lebesgue measure).  We denote the Lebesgue measure by $\nu_n$.
			\item A \emph{self-adjoint random matrix} means a random variable on some probability space, taking values in $M_n(\C)_{\sa}$.
			\item Then a \emph{GUE matrix} is a random self-adjoint matrix with probability density $(1/Z_n) e^{-n^2 \norm{x}_2^2/2}$ with respect to Lebesgue measure, where $Z_n$ is a normalizing constant.  The GUE matrix is self-adjoint, with diagonal entries real normal random variables of mean zero and variance $1/n$, and the off-diagonal entries have real and imaginary parts that are normal random variables of mean zero and variance $1/2n$.   See also Section \ref{subsec: normalization}.
		\end{itemize}        
	\end{notation}
	
	The main results that we will need about asymptotic freeness can be summarized as follows.
	
	\begin{theorem} \label{thm: asymptotic free independence}
		Let $\cM$ be a tracial von Neumann algebra and $\mathbf{Y}$ a self-adjoint $\N$-tuple from $\cM$.  Let $S_1$, \dots, $S_m$ be a standard free semicircular family freely independent from $\mathbf{Y}$.
		
		Let $Y_1^{(n)}$, $Y_2^{(n)}$, \dots be random self-adjoint matrices such that
		\begin{enumerate}[(1)]
			\item Almost surely $\limsup_{n \to \infty} \norm{Y_j^{(n)}}_{\operatorname{op}} < \infty$.
			\item Almost surely $\mathbf{Y}^{(n)}$ converges to $\mathbf{Y}$ in non-commutative law.
		\end{enumerate}
		Let $S_1^{(n)}$, \dots, $S_m^{(n)}$ be independent $n \times n$ GUE matrices.  Then
		\begin{enumerate}[(a)]
			\item $\lim_{n \to \infty} \E \norm{S_j^{(n)}}_{\operatorname{op}}^j = 2^j$.
			\item Almost surely $(\mathbf{S}^{(n)},\mathbf{Y}^{(n)})$ converges to $(\mathbf{S},\mathbf{Y})$ in non-commutative law.
		\end{enumerate}
	\end{theorem}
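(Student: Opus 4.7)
The plan is to combine three standard ingredients from random-matrix theory: Wick's formula for expected normalized traces of monomials in GUE matrices, the genus expansion that isolates non-crossing pair partitions as the leading contribution, and Gaussian (log-Sobolev / Herbst) concentration to upgrade expectation-level convergence to almost sure convergence. Parts (a) and (b) share essentially the same moment computation, so I would treat them together.

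For (a), I would compute $\E \tr_n\!\bigl((S_j^{(n)})^{2p}\bigr)$ by Wick's formula: expanding the Gaussian expectation produces a sum over pair partitions of $\{1,\dots,2p\}$ where each pairing of genus $g$ contributes $n^{-2g}$. The limit is the Catalan number $C_p$, i.e.\ the $2p$-th moment of the semicircle on $[-2,2]$. The elementary bound $\|S_j^{(n)}\|_{\op}^{2p} \leq n\,\tr_n((S_j^{(n)})^{2p})$, with $p$ chosen to grow slowly with $n$, then gives $\limsup_n \E \|S_j^{(n)}\|_{\op}^k \leq 2^k$ for every $k$. Wigner's semicircle law for the empirical spectral distribution supplies the matching lower bound, and uniform integrability (from the same Wick bound) converts convergence in probability to $L^p$.

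For (b), I would work conditionally on $\mathbf{Y}^{(n)}$ on the full-measure event where (1) and (2) hold. For each non-commutative monomial $m$ in the variables $(\mathbf{x},\mathbf{y})$, Wick-expand $\E[\tr_n(m(\mathbf{S}^{(n)},\mathbf{Y}^{(n)}))\mid \mathbf{Y}^{(n)}]$ by pairing the $S$-letters of equal index; each pairing contributes $n^{-1}$ times a trace of a cyclic word in the $\mathbf{Y}^{(n)}$'s. Non-crossing pairings contribute $O(1)$ terms whose sum, by (2), is exactly the moment $\tau(m(\mathbf{S},\mathbf{Y}))$ computed when $\mathbf{S}$ is a standard free semicircular family freely independent of $\mathbf{Y}$; crossing pairings contribute $O(1/n^2)$ and are dominated using (1). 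This yields $\E[\tr_n(m(\mathbf{S}^{(n)},\mathbf{Y}^{(n)}))\mid \mathbf{Y}^{(n)}] \to \tau(m(\mathbf{S},\mathbf{Y}))$ almost surely. To remove the expectation, apply Gaussian concentration: on the event $\{\|\mathbf{S}^{(n)}\|_{\op} \leq R'\}$, whose complement has exponentially small probability by (a), and using (1), the map $\mathbf{s}\mapsto \tr_n(m(\mathbf{s},\mathbf{Y}^{(n)}))$ is Lipschitz with constant $O(1/n)$ in the Gaussian $\ell^2$-metric, so the Herbst inequality gives $\Prob(|\tr_n(m)-\E\tr_n(m)|>\epsilon\mid\mathbf{Y}^{(n)})\leq 2e^{-cn^2\epsilon^2}$. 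Borel--Cantelli and a countable diagonal over all monomials then yield almost sure joint non-commutative law convergence.

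The main obstacle I anticipate is the interaction between the randomness in $\mathbf{Y}^{(n)}$ and the Gaussian concentration for $\mathbf{S}^{(n)}$: the Lipschitz constants degenerate when operator norms are unbounded, so I would need to truncate using the almost sure operator-norm bound from (1), apply Fubini to disentangle the two sources of randomness, and then verify that the truncation error is negligible uniformly for each fixed monomial. Once this is handled, both the genus expansion and the concentration estimate work uniformly on the good event, and the proof closes.
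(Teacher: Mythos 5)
Your proposal is correct in outline, but it takes a genuinely different route from the paper: the paper does not re-derive asymptotic freeness at all. For part (b) it simply invokes Voiculescu's asymptotic freeness theorem \cite[Theorem 2.2]{Voiculescu1991}, and for part (a) it quotes a moment/tail estimate for $\Tr_n\bigl((S^{(n)})^{2k(n)}\bigr)$ from \cite[p.~24]{AGZ2009} with $k(n)$ growing slowly, and then applies H\"older's inequality on the probability space to get $\E\norm{S^{(n)}}_{\op}^j \leq (2n)^{j/2k(n)} 2^j \to 2^j$. Your plan instead re-proves both ingredients from scratch: the Wick/genus expansion with deterministic $\bY$-words inserted (conditioning on $\bY^{(n)}$ and using Fubini, which is legitimate since $\bS^{(n)}$ is independent of $\bY^{(n)}$), identification of the non-crossing terms with the free moments via the Kreweras-complement factorization, suppression of crossing pairings using the operator-norm bound in (1), and then Gaussian concentration plus Borel--Cantelli and a diagonal over countably many monomials to upgrade to almost sure convergence; for (a) you add the matching lower bound from the semicircle law via Fatou, which the theorem's statement does require and which your argument supplies explicitly. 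What the paper's approach buys is brevity and reliance on standard references; what yours buys is a self-contained, elementary argument in the spirit of the paper, at the cost of substantially more bookkeeping. The one step you should be careful to execute is the one you already flag: the polynomial trace is only locally Lipschitz, so the Herbst argument must be applied to a Lipschitz modification that agrees with the trace on the good event $\{\norm{\bS^{(n)}}_{\op}\leq R'\}$ (whose complement is exponentially small), rather than ``conditioning'' the log-Sobolev inequality on that event; alternatively, a second-moment (variance) Wick computation showing the conditional variance of $\tr_n(m(\bS^{(n)},\bY^{(n)}))$ is $O(1/n^2)$ lets you bypass concentration entirely and still close the Borel--Cantelli argument.
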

	
	This theorem combines several known statements in random matrix theory.  Theorem \ref{thm: asymptotic free independence} (b) is \cite[Theorem 2.2]{Voiculescu1991}.
	
	For claim (a), since the behavior of the largest eigenvalue of a GUE matrix has been studied in depth, there are several sources we could deduce this from.  For instance, \cite[p. 24]{AGZ2009} shows the following:  Let $k(n)$ be an integer such that $k(n)^{c_1} / n \to 0$ and $k(n) / \log n \to \infty$ as $n \to \infty$, where $c_1$ is a certain positive constant.  Then we have for sufficiently large $n$ that
	\[
	\mathbb{P}(\Tr_n((S^{(n)})^{2k(n)}) > (2 + \delta)^{2k(n)} ) \leq 2n 4^{k(n)}.
	\]
	Then by applying H\"older's inequality on the underlying probability space
	\[
	\mathbb{E}[ \norm{S^{(n)}}_{\operatorname{op}}^j ] \leq \mathbb{E}[ \norm{S^{(n)}}_{\operatorname{op}}^{2k(n)} ]^{j/2k(n)} \leq \mathbb{E} \left[ \Tr_n((S^{(n)})^{2k(n)} ) \right]^{j/2k(n)} \leq \left( 2n 4^{k(n)} \right)^{j/2k(n)} = (2n)^{j/2k(n)} 2^j.
	\]
	Taking the limit as $n \to \infty$, we obtain Theorem \ref{thm: asymptotic free independence} (a).
	
	\subsection{Ultrafilters, ultralimits, and ultraproducts} \label{sec: ultra prelims}
	
	As mentioned in the introduction, we define conditional microstates free entropy using ultrafilters to highlight the connection with embeddings into matrix ultraproducts.  Here we quickly review the notions of ultrafilters, ultralimits, and the tracial ultraproduct construction for von Neumann algebras.  For further reference, see \cite{Goldbring2022ultrafilters}, \cite[Appendix A]{BrownOzawa2008}, \cite[\S 5.3]{GJNS2022}, \cite[\S 5.7]{Ioana2023}.
	
	A \emph{filter} $\mathcal{U}$ on an index set $I$ is a collection of subsets of $I$ such that 
	\begin{itemize}
		\item $\emptyset \not \in \cU$, $I \in \cU$;
		\item whenever $A, B \in \cU$, then $A \cap B \in \cU$;
		\item whenever $A \subseteq B$ and $A \in \cU$, then $B \in \cU$;
		\item If in addition $\cU$ is maximal, i.e. for any $A \subseteq I$, we have either $A \in \cU$ or $I \setminus A \in \cU$, then we say $\cU$ is an \emph{ultrafilter}.
	\end{itemize}
	We say that $\cU$ is principal if $\cU = \{A \subseteq I: i \in A\}$ for some $i \in I$. Otherwise we say $\cU$ is \emph{non-principal}.

	Later on in some proofs, we will need the following easy consequence for ultrafilters on $\N$. This fact is well-known, but we provide a self-contained proof for the reader's convenience.
	
	\begin{fact} \label{fact: IIP ultrafilters exist}
		Suppose $\{A_n\}_{n \in \N}$ is a nested decreasing sequence of non-empty subsets of $\N$ such that $\bigcap_{n \in \N} A_n = \emptyset$. Then there exists a non-principal ultrafilter $\cU$ such that $A_n \in \cU$ for all $n$.
	\end{fact}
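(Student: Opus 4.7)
The plan is to build an ultrafilter by invoking the ultrafilter lemma (a standard consequence of Zorn's lemma), starting from the family $\{A_n\}_{n \in \N}$, and then verify non-principality using the hypothesis that the intersection is empty.

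First, I would observe that the collection $\mathcal{F} = \{A_n : n \in \N\}$ has the finite intersection property. Indeed, since the sequence is nested decreasing, any finite intersection $A_{n_1} \cap \dots \cap A_{n_k}$ equals $A_{\max(n_1,\dots,n_k)}$, which is non-empty by hypothesis. Therefore the family $\mathcal{F}_0$ consisting of all subsets $B \subseteq \N$ that contain some $A_n$ forms a (proper) filter on $\N$: it is closed under supersets by construction, closed under finite intersections because $A_m \cap A_n = A_{\max(m,n)}$, and does not contain $\emptyset$ by the finite intersection property above.

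Next, by the ultrafilter lemma, every proper filter extends to an ultrafilter; so let $\cU$ be an ultrafilter on $\N$ with $\mathcal{F}_0 \subseteq \cU$. In particular $A_n \in \cU$ for every $n$.

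Finally, I would check that $\cU$ is non-principal. Suppose, for contradiction, that $\cU$ is principal, so $\cU = \{B \subseteq \N : k \in B\}$ for some $k \in \N$. Then, since $A_n \in \cU$ for every $n$, we would have $k \in A_n$ for every $n$, contradicting $\bigcap_{n \in \N} A_n = \emptyset$. Hence $\cU$ is non-principal and satisfies the required property. There is no serious obstacle here; the only point worth being explicit about is the finite intersection property, which uses both the nestedness and the non-emptiness of each $A_n$.
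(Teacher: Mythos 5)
Your proof is correct and follows essentially the same route as the paper: build the filter of supersets of the $A_n$'s (using nestedness and non-emptiness for the finite intersection property), extend it to an ultrafilter by Zorn's lemma, and use $\bigcap_n A_n = \emptyset$ to rule out principality. Your explicit contradiction argument for non-principality is a slightly more detailed rendering of the same observation made in the paper.
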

	
	\begin{proof}
		Consider $\mathcal{F} = \{A_n\}_{n \in \N} \, \cup \, \{B \subseteq \N : B \supseteq A_n \text{ for some $n$} \}$. Then $\mathcal{F}$ is a filter by construction, so one can find an ultrafilter $\cU$ extending $\mathcal{F}$ by Zorn's lemma. Note that the ultrafilter $\cU$ cannot be principal, because $A_n \in \cU$ for all $n$, but $\bigcap_{n \in \N} A_n = \emptyset \not \in \cU$.
	\end{proof}
	
	In this paper, all ultrafilters will be on $\N$. Given a sequence $(x_n)_{n \in \N} \subseteq \R$, we say that \emph{$x_n$ converges to $x$ along the ultrafilter $\cU$}, denoted $\lim_{n \to \cU} x_n = x$, if for every $\epsilon > 0$, we have 
	$\{k \in \N : |x_k - x| < \epsilon\} \in \cU$. 
	This generalizes the notions of the usual limit, $\limsup$, and $\liminf$.
	
	Finally, we discuss the tracial ultraproduct construction. Given a sequence of tracial von Neumann algebras $(\cM_n, \tau_n)$, we consider the set of uniformly bounded sequences ${\{(x_n) \in \prod_\N \cM_n : \sup_n \norm{x_n} < \infty\}}$, denoted by $\ell^\infty((\cM_n)_{n \in \N})$. 
	We then take the separation-completion of $\ell^\infty((\cM_n)_{n \in \N})$ with respect to the norm $\norm{x}_2 = \tau(x^* x)^{1/2}$, where $\tau((x_n)_{n \in \N}) = \lim_{n \to \cU} \tau_n(x_n)$. The resulting object is denoted by $(\prod_\cU \cM_n, \tau)$ and is again a tracial von Neumann algebra, whose elements we write as $(x_n)_\cU$ for a representing sequence $(x_n)_{n \in \N}$. For further details on this construction, see \cite[\S 14.4]{Goldbring2022ultrafilters}.
	
	In this paper, we only focus on matrix ultraproducts $\mathbb{M}_{\cU} := \prod_\cU M_n(\C)$. Our motivation for phrasing the main results in terms of ultraproducts stems from the fact that they make approximate embeddings exact:
	\begin{fact}[{See also \cite[Lemma 5.10]{GJNS2022}}] \label{fact/def: matrix ultraproduct}
		Fix a tracial von Neumann algebra $(\cM, \tau)$ and a non-principal ultrafilter $\cU$.
		Let $\bX = (X_1, X_2,\dots)$ be a tuple from $\cM_{\sa}$. 
		Let $(\prod_\cU M_n(\C), \tr)$ be a matrix ultraproduct with trace $\tr = \lim_{n \to \cU} \tr_n$.
		If $(\bX^{(n)})_{n \in \N} = ((X_1^{(n)}, X_2^{(n)},\dots)_{n \in \N}$ is a sequence of $m$-tuples of self-adjoint matrices in $M_n(\C)_{\sa}$ such that $\sup_n \norm{X_j}_{\operatorname{op}} < \infty$ for each $j$ and $\bX^{(n)}$ converges in non-commutative law to $\bX$, then there is a unique trace-preserving embedding ${\pi: (\mathrm{W}^*(\bX), \tau) \rightarrow (\prod_\cU M_n(\C), \tr)}$ which sends $\bX$ to $(\bX^{(n)})_\cU$.
	\end{fact}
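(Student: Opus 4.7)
The strategy is to first define the embedding on the $*$-algebra generated by $\bX$ by the obvious formula $p(\bX) \mapsto (p(\bX^{(n)}))_\cU$, use convergence in non-commutative law to check that this is well-defined and trace-preserving, and then extend to $\mathrm{W}^*(\bX)$ using Lemma \ref{lem: polynomial approximation} to control operator norms while passing to $\|\cdot\|_2$-limits.

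Step 1 (definition on polynomials). Since $\sup_n \norm{X_j^{(n)}}_{\op} < \infty$ for each $j$, for any $p \in \C\scal{x_i : i \in \N}$ the sequence $(p(\bX^{(n)}))_n$ is uniformly operator-norm bounded and therefore determines an element $(p(\bX^{(n)}))_\cU \in \prod_\cU M_n(\C)$. The assignment $p \mapsto (p(\bX^{(n)}))_\cU$ is a $*$-homomorphism from $\C\scal{x_i : i \in \N}$ into the ultraproduct, and by convergence in non-commutative law,
\[
\tr\bigl((p(\bX^{(n)}))_\cU\bigr) = \lim_{n \to \cU} \tr_n(p(\bX^{(n)})) = \tau(p(\bX)).
\]
Applied to $p^*p$, this gives $\norm{(p(\bX^{(n)}))_\cU}_2 = \norm{p(\bX)}_2$. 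Hence the assignment factors through $\cM$ and descends to a trace- and $\|\cdot\|_2$-preserving $*$-homomorphism $\pi_0$ from the $*$-algebra generated by $\bX$ into $\prod_\cU M_n(\C)$.

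Step 2 (extension to $\mathrm{W}^*(\bX)$). Given $Z \in \mathrm{W}^*(\bX)$, choose $R_j \geq \max(\norm{X_j}_{\op},\, \sup_n \norm{X_j^{(n)}}_{\op})$ and apply Lemma \ref{lem: polynomial approximation} to produce a sequence of $*$-polynomials $p_k$ such that $\norm{Z - p_k(\bX)}_2 \to 0$ and, crucially, $\norm{p_k(\bY)}_{\op} \leq \norm{Z}_{\op}$ for every tracial von Neumann algebra $\cN$ and every $\bY \in \cN_{\sa}^\N$ with $\norm{Y_j} \leq R_j$. Specializing to $\cN = M_n(\C)$ and $\bY = \bX^{(n)}$, we get $\norm{p_k(\bX^{(n)})}_{\op} \leq \norm{Z}_{\op}$ for all $n$, hence $\norm{\pi_0(p_k(\bX))}_{\op} \leq \norm{Z}_{\op}$ in the ultraproduct. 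The sequence $(\pi_0(p_k(\bX)))_k$ is Cauchy in $\|\cdot\|_2$ by Step 1 and is operator-norm bounded, so since the operator-norm ball of radius $\norm{Z}_{\op}$ in the tracial von Neumann algebra $\prod_\cU M_n(\C)$ is $\|\cdot\|_2$-closed, it converges to some $\pi(Z)$ with $\norm{\pi(Z)}_{\op} \leq \norm{Z}_{\op}$. Independence from the chosen approximants, linearity, the $*$-property, and multiplicativity all follow from the $\|\cdot\|_2$-isometry on polynomials combined with the non-commutative H\"older inequality (Fact \ref{fact: Holder}) used to control products of uniformly operator-norm bounded sequences.

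Step 3 (injectivity and uniqueness). The extended map $\pi$ is trace-preserving by $\|\cdot\|_2$-continuity and Step 1, and hence injective since $\tau$ is faithful. Uniqueness is immediate: any other trace-preserving embedding must send $X_j$ to $(X_j^{(n)})_\cU$ and therefore agree with $\pi$ on polynomials in $\bX$; since trace-preserving $*$-homomorphisms between tracial von Neumann algebras are automatically normal, the $\|\cdot\|_2$-density of $\mathrm{C}^*(\bX)$ in operator-norm-bounded balls of $\mathrm{W}^*(\bX)$ used in Step 2 forces agreement everywhere. The main technical obstacle is the simultaneous $\|\cdot\|_2$-approximation and operator-norm control needed for the extension, which is exactly what Lemma \ref{lem: polynomial approximation} (a Kaplansky-density refinement via the universal $\mathrm{C}^*$-algebra of $\bR$-bounded tuples) provides.
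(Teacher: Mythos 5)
Your proposal is correct and follows essentially the same route as the paper: the heart of the matter in both is the verification on polynomials that $\tr((p(\bX^{(n)}))_\cU)=\lim_{n\to\cU}\tr_n(p(\bX^{(n)}))=\tau(p(\bX))$. The paper treats the extension from the $*$-algebra generated by $\bX$ to $\mathrm{W}^*(\bX)$ as standard (citing normality of traces and \cite[Lemma 5.10]{GJNS2022}), whereas you spell it out via Lemma \ref{lem: polynomial approximation} and $\norm{\cdot}_2$-closedness of operator-norm balls; this is a correct and complete way to fill in exactly what the paper leaves implicit.
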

	This is easy to check: indeed, since traces are normal linear functionals, it is enough to consider monomials $x_{i_1} \cdots x_{i_k}$. But then simply note that by convergence of joint moments we have that 
	\[
	\tau(X_{i_1} \cdots X_{i_k}) = \lim_{n \to \cU} \tr_n(X_{i_1}^{(n)} \cdots X_{i_k}^{(n)}) = \tr((X_{i_1}^{(n)} \cdots X_{i_k}^{(n)})_\cU) = 
	\tr(\pi(X_{i_1} \cdots X_{i_k})).
	\]
	
	\section{Background on classical entropy} \label{sec: classical entropy}
	
	\subsection{Classical Entropy and Fisher Information}
	
	We briefly review only basic definitions of classical entropy and Fisher information. Many of these facts are from \cite{Barron1996, SW1949, Stam1959}, and an exposition of entropy suited to the random matrix context is given in \cite[Chapter 7 \& 8]{MingoSpeicher}.
	
	\begin{definition}
		Let $\mu$ be a measure on $\R^m$ with absolutely continuous density $\rho$ with respect to Lebesgue measure, i.e. $d\mu = \rho \, dx$. The \textit{entropy} of $\mu$ is defined to be
		\[
		h(\mu) := \int_{\R^m} - \rho \log \rho \, dx,
		\]
		whenever the integral is defined. If $\mu$ does not have a density $\rho$, then we set $h(\mu) := - \infty$. 
		If $X$ is a random variable with distribution $\mu$, then we also write $h(X)$ in place of $h(\mu)$.
	\end{definition}
	
	
	We will need some facts about classical entropy in some of the proofs to follow.
	We list these facts here for the readers convenience; they come from \cite[Lemma B.5(i) \& (ii)]{ST2022}.
	\begin{fact}[Entropy controlled by partition] \label{fact: entropy controlled by ptn}
		Let $X$ be a random variable in $\R^m$ with law $\mu$, and let $(S_j)_{j=1}^\infty$ be a measurable partition of $\R^m$. Then
		\[
		h(\mu) \leq \sum_{j=0}^\infty \mu(S_j) \log \operatorname{Leb}(S_j) - \sum_{j=0}^\infty \mu(S_j) \log \mu(S_j),
		\]
		where $\operatorname{Leb}$ denotes the Lebesgue measure.
	\end{fact}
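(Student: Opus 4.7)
The plan is to reduce the bound to a one-cell estimate and then sum. Without loss of generality, $\mu$ admits a density $\rho$ with respect to Lebesgue measure, since otherwise $h(\mu) = -\infty$ and the inequality is vacuous. Write $p_j := \mu(S_j)$ and $\ell_j := \operatorname{Leb}(S_j)$, and split the defining integral along the partition:
\[
h(\mu) \;=\; \sum_{j=0}^{\infty} \int_{S_j} -\rho \log \rho \, dx.
\]

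On each cell $S_j$ with $p_j > 0$, I would factor $\rho = p_j \rho_j$ where $\rho_j := \rho/p_j$ is a probability density supported on $S_j$. Expanding the logarithm,
\[
\int_{S_j} -\rho \log \rho \, dx \;=\; -p_j \log p_j \;+\; p_j \int_{S_j} -\rho_j \log \rho_j \, dx.
\]
The key one-cell inequality is that $\int_{S_j} -\rho_j \log \rho_j \, dx \leq \log \ell_j$, i.e.\ the uniform density maximizes entropy among probability densities on a set of finite Lebesgue measure. This follows from Jensen's inequality applied to the convex function $t \mapsto t \log t$ against the normalized uniform measure on $S_j$, or equivalently from nonnegativity of the Kullback--Leibler divergence $D(\rho_j \,\|\, \ell_j^{-1} \mathbf{1}_{S_j}) \geq 0$. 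Substituting gives the per-cell bound
\[
\int_{S_j} -\rho \log \rho \, dx \;\leq\; p_j \log \ell_j \;-\; p_j \log p_j,
\]
and summing over $j$ yields the stated inequality.

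The degenerate cases only need a quick check: if $p_j = 0$ then both the integral and the two right-hand side terms vanish under the standard $0 \log 0 = 0$ convention; if $\ell_j = \infty$ then $\log \ell_j = +\infty$ and the cell-bound is automatic (so in particular the sum $\sum p_j \log \ell_j$ is interpreted in $[-\infty,+\infty]$). I do not anticipate any genuine obstacle here; the only minor subtlety is managing these boundary cases and the possibility that $\sum p_j \log \ell_j$ or $\sum -p_j \log p_j$ individually diverge, which is handled by the convention that makes the right-hand side well-defined in the extended reals.
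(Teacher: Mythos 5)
Your proof is correct: splitting the entropy integral over the partition, factoring $\rho = \mu(S_j)\,\rho_j$ on each cell, and bounding the per-cell entropy by $\log \operatorname{Leb}(S_j)$ via Jensen (equivalently, Gibbs' inequality) is the standard argument, and your handling of the cells with $\mu(S_j)=0$ or $\operatorname{Leb}(S_j)=\infty$ is the right bookkeeping. Note that the paper does not prove this statement itself but imports it from \cite[Lemma B.5]{ST2022}; your argument is essentially the proof given in that reference, so there is nothing to contrast.
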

	
	\begin{fact}[Entropy controlled by variance] \label{fact: entropy controlled by var}
		If $X$ is a random variable taking values in $\R^m$ with finite variance $\operatorname{Var}(X)$, then
		\[
		h(X) \leq \frac{d}{2} \log(2\pi e \operatorname{Var}(X)/d).
		\]
	\end{fact}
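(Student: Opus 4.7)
The plan is to apply Gibbs' inequality, comparing the distribution of $X$ to an appropriately chosen isotropic Gaussian with the same trace of covariance. Throughout I write $d = m$ (as is clearly intended in the statement). First, I would reduce to the mean-zero case: differential entropy is invariant under translation, as is variance, so we may assume $\E X = 0$. If $X$ has no density, then $h(X) = -\infty$ by definition and the inequality holds trivially, so suppose $X$ has density $\rho$. Set $\sigma^2 := \operatorname{Var}(X)/d = \frac{1}{d}\sum_{i=1}^d \E[X_i^2]$ and let $g(x) = (2\pi\sigma^2)^{-d/2}\exp(-\|x\|^2/(2\sigma^2))$ be the density of the isotropic Gaussian $\mathcal{N}(0,\sigma^2 I_d)$.

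Second, I would invoke the non-negativity of relative entropy (Gibbs' inequality), which follows from Jensen applied to $-\log$:
\[
0 \leq \int \rho(x) \log \frac{\rho(x)}{g(x)}\,dx = -h(\rho) - \int \rho(x) \log g(x)\,dx.
\]
Using $\log g(x) = -\tfrac{d}{2}\log(2\pi\sigma^2) - \|x\|^2/(2\sigma^2)$ together with $\E\|X\|^2 = d\sigma^2$, the right-hand integral evaluates to
\[
\int \rho(x) \log g(x)\,dx = -\tfrac{d}{2}\log(2\pi\sigma^2) - \tfrac{d}{2}.
\]
Rearranging yields
\[
h(\rho) \leq \tfrac{d}{2}\log(2\pi\sigma^2) + \tfrac{d}{2} = \tfrac{d}{2}\log(2\pi e \, \operatorname{Var}(X)/d),
\]
which is the desired bound.

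The argument has no real obstacle: the only subtle point is a small technicality in Gibbs' inequality, namely justifying $-\int \rho \log g\,dx \geq h(\rho)$ when $h(\rho)$ might be $-\infty$ or $\int \rho \log \rho$ might fail to be absolutely integrable. This is handled either by noting that $-\log g$ is bounded below by an affine function of $\|x\|^2$, so its integral against $\rho$ is well-defined in $[-\infty,\infty)$, and then the standard inequality $\int \rho \log(\rho/g)\,dx \geq 0$ (which holds whenever $\rho, g$ are probability densities) gives the bound. In spirit this is just the classical maximum-entropy characterization of the Gaussian, specialized to the isotropic case so that the determinant of the covariance is controlled directly by the trace via the choice $\sigma^2 = \operatorname{Var}(X)/d$, bypassing the need for any AM-GM step on eigenvalues of the covariance matrix.
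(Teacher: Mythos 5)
Your proof is correct, and it is the standard argument: reduce to mean zero, compare $\rho$ with the isotropic Gaussian of per-coordinate variance $\operatorname{Var}(X)/d$, and use nonnegativity of relative entropy, with the integrability subtlety handled exactly as it should be (the cross-entropy term is finite by the finite-variance hypothesis, so the decomposition $h(\rho) = -\int \rho \log g - D(\rho\,\Vert\, g)$ is legitimate even when $h(\rho) = -\infty$). The paper itself does not prove this fact but cites it from \cite[Lemma B.5]{ST2022}; your Gibbs-inequality/maximum-entropy-of-the-Gaussian derivation is the standard proof of that cited statement (and you correctly read $d=m$ and $\operatorname{Var}(X)$ as the total variance $\E\lVert X - \E X\rVert^2$, which is how the fact is used later in the paper).
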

	
	We recall the classical Fisher information of $\mu$, denoted $\mathcal{I}(\mu)$, which is defined below as the derivative at time zero of a Brownian motion starting at $\mu$. 
	
	Let $\gamma_t$ be the multivariate Gaussian measure on $\R^m$ with covariance matrix $tI$. Suppose $\mu$ is a measure with smooth density $\rho > 0$ on $\R^m$, and set $\mu_t = \mu \ast \gamma_t$ with associated density $\rho_t$. We may compute via integration by parts that $\partial_t h(\mu_t) = \int \norm{\nabla \rho_t / \rho_t}^2 \ d\mu_t$. Evaluating this at $t = 0$, we obtain that
	\[
	\mathcal{I}(\mu) = \int \norm{\nabla \rho / \rho}^2 \ d\mu = \E \left[ \norm*{- \nabla \rho / \rho }^2 \right].
	\]
	Rewriting the integral above as the expected value on the right hand side helps with moving to the non-commutative case. 
	Namely, $\Xi := - \frac{\nabla \rho}{ \rho} (\bX)$ satisfies the relation 
	\begin{equation} \label{cl-score-fxn-eqn}
		\E[ \scal{\Xi, f(\bX)}_{\C^m}] = \E[ \nabla \cdot  f(\bX)] \text{ for all } f \in C_c^\infty (\R^m;\C^m),
	\end{equation}
	see \cite[\S 12]{JekelThesis} for details. This latter notion makes sense even when $\mu$ does not have a smooth density, and motivates the following definition.
	
	\begin{definition} \label{def: classical score function}
		Suppose $\bX$ is an $\R^m$-valued random variable on the probability space $(\Omega, \mathbb{P})$. If there is a random variable $\Xi \in L^2(\Omega, \mathbb{P})$ satisfying \ref{cl-score-fxn-eqn} and each $\Xi_j$ is in the closure of $\{f(\bX) \ : \ f \in C_c^\infty (\R^m) \}$ in $L^2(\Omega, \mathbb{P})$, then we say $\Xi$ is the \textit{score function} for $\bX$.
	\end{definition}
	
	\begin{definition}
		The \textit{Fisher information} $\mathcal{I}(\mu)$ is defined as $\E[ |\Xi|^2]$ if $\bX \sim \mu$ and $\Xi$ is a score function for $\bX$. If no score function for $\bX$ exists, then we set $\mathcal{I}(\mu) := \infty$.
	\end{definition}
	
	Note that the definition of Fisher information aligns with the integral $\int \norm{\nabla \rho / \rho}^2 \ d\mu$ if $\mu$ has smooth density $\rho$, but the given definition is more general and will be directly analogous to the free Fisher information given in Definition $\ref{def: cond free Fisher info}$.
	
	We remark that the integration-by-parts relation can be extended to more general functions than $C_c^\infty$.  Specifically, if $\mu$ has finite moments, then we can use test functions of polynomial growth.  This will become important later when we relate classical and free Fisher information (see Corollary \ref{cor: IBP matrix version} and Proposition \ref{prop: cond Phi* upper bound}).
	
	\begin{lemma} \label{lem: poly growth IBP}
		Let $\bX$ be an $\R^m$-valued random variable with finite moments, and let $\Xi$ be a score function for $\bX$.  Let $f: \R^m \to \C^m$ be a smooth function such that $|f(x)| \leq A(1 + |x|^2)^k$ and $\norm{Df(x)} \leq B(1 + |x|^2)^k$ for some $A, B > 0$ and $k \in \N$.  Then $\E[ \scal{\Xi, f(\bX)}_{\C^m}] = \E[ \nabla \cdot f(\bX)]$.
	\end{lemma}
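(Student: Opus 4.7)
My plan is to reduce the polynomial-growth case to the compactly supported case by a smooth cutoff and dominated convergence. Choose a smooth cutoff $\chi \in C_c^\infty(\R^m)$ with $0 \leq \chi \leq 1$, $\chi = 1$ on the unit ball and $\chi = 0$ outside the ball of radius $2$, and set $\chi_R(x) = \chi(x/R)$. Then $\chi_R \to 1$ pointwise, $|\nabla \chi_R(x)| \leq C/R$ for some constant $C = \lVert \nabla \chi \rVert_\infty$, and $\nabla \chi_R$ is supported in the annulus $\{R \leq |x| \leq 2R\}$. Define $f_R := \chi_R f \in C_c^\infty(\R^m;\C^m)$, so by Definition \ref{def: classical score function},
\[
\E[\scal{\Xi, f_R(\bX)}_{\C^m}] = \E[\nabla \cdot f_R(\bX)] = \E[\chi_R(\bX)\, \nabla \cdot f(\bX)] + \E[\scal{\nabla \chi_R(\bX), f(\bX)}_{\C^m}].
\]
The plan is to let $R \to \infty$ and check that each of the three expectations converges to the expected value, giving the desired identity.

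For the left-hand side, $|\scal{\Xi, f_R(\bX)}| \leq |\Xi| \cdot |f(\bX)| \leq A |\Xi|(1 + |\bX|^2)^k$, and by Cauchy--Schwarz
\[
\E[|\Xi|(1+|\bX|^2)^k] \leq \E[|\Xi|^2]^{1/2}\, \E[(1+|\bX|^2)^{2k}]^{1/2} < \infty,
\]
since $\Xi \in L^2$ and $\bX$ has finite moments of all orders. Dominated convergence then yields $\E[\scal{\Xi, f_R(\bX)}] \to \E[\scal{\Xi, f(\bX)}]$. For the first term on the right-hand side, $|\chi_R(\bX) \nabla \cdot f(\bX)| \leq \sqrt{m}\, B(1+|\bX|^2)^k$, which is integrable by the moment assumption on $\bX$, so dominated convergence gives $\E[\chi_R(\bX) \nabla \cdot f(\bX)] \to \E[\nabla \cdot f(\bX)]$.

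The remaining term requires only that it vanishes in the limit. We estimate
\[
|\scal{\nabla \chi_R(\bX), f(\bX)}| \leq \frac{C}{R}\, A(1 + |\bX|^2)^k\, \mathbf{1}_{\{|\bX| \geq R\}},
\]
which tends to $0$ pointwise as $R \to \infty$ and is dominated by the integrable function $CA(1+|\bX|^2)^k$. Thus another application of dominated convergence gives $\E[\scal{\nabla \chi_R(\bX), f(\bX)}] \to 0$, and combining the three limits completes the proof. There is no real obstacle here; the only thing to be careful about is ensuring the dominating functions are integrable, which is exactly where the finite-moment hypothesis on $\bX$ and the $L^2$ hypothesis on $\Xi$ are used.
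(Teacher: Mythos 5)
Your proof is correct and follows essentially the same strategy as the paper: multiply $f$ by a smooth compactly supported cutoff, apply the integration-by-parts relation for $C_c^\infty$ test functions, and pass to the limit by dominated convergence using the polynomial-growth bounds, the finite moments of $\bX$, and $\Xi \in L^2$. The only cosmetic difference is that you split $\nabla \cdot (\chi_R f)$ into two terms and show the cross term vanishes, whereas the paper bounds the full derivative of the truncated function at once; the substance is identical.
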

	
	\begin{proof}
		Let $\phi: \R^m \to [0,1]$ be a $C_c^\infty$ function such that $\phi(0) = 1$, and for $t \in (0,1]$, let
		\[
		f_t(x) = f(x) \phi(tx).
		\]
		Note that $|f_t(x)| \leq A(1 + |x|^2)^k$ and $\lim_{t \to 0} f_t(x) = f(x)$.  Also,
		\[
		Df_t(x) = Df(x) \phi(tx) + t f(x) (\nabla \phi(x))^*,
		\]
		where $(\nabla \phi(x))^*$ is the row vector obtained by transposing $\phi(x)$,
		and hence
		\[
		\norm{D f_t(x)} \leq \norm{Df(x)} + t |f(x)| \norm{\nabla \phi}_{L^\infty} \leq (B +  A\norm{\nabla \phi}_{L^\infty}) (1 + |x|^2)^k,
		\]
		and $\lim_{t \to 0} Df_t(x) = Df(x)$.  Since $f_t \in C_c^\infty(\R^m;\C^m)$, we have
		\[
		\E [\scal{\Xi,f_t(\mathbf{X})}_{\C^m} ]= \E [\nabla \cdot f_t(\mathbf{X})].
		\]
		Now we will take $t \to 0$ and apply the dominated convergence theorem (on the underlying probability space).  Note that
		\[
		|\ip{\Xi,f_t(\mathbf{X})}_{\C^m}| \leq A |\Xi| (1 + |\mathbf{X}|^2)^k,
		\]
		and the function on the right is in $L^1$ because $\Xi$ and $(1 + |\mathbf{X}|^2)^k$ are in $L^2$; indeed, $\E (1 + |\mathbf{X}^2|)^{2k} < \infty$ because we assumed $\mathbf{X}$ has finite moments.  Hence, by the dominated convergence theorem,
		\[
		\lim_{t \to 0} \E [\scal{\Xi,f_t(\mathbf{X})}_{\C^m} ]= \E [\scal{\Xi,f(\mathbf{X})}_{\C^m}].
		\]
		Similarly,
		\[
		|\nabla \cdot f_t(\mathbf{X})| \leq m \norm{Df_t(\mathbf{X})} \leq m(B +  A\norm{\nabla \phi}_{L^\infty}) (1 + |\mathbf{X}|^2)^k,
		\]
		and the latter has finite expectation because $\mathbf{X}$ has finite moments.  Therefore, by dominated convergence,
		\[
		\E [\nabla \cdot f(\mathbf{X})] = \lim_{t \to 0} \E [\nabla \cdot f_t(\mathbf{X})] = \lim_{t \to 0} \E [\scal{\Xi, f_t(\mathbf{X})}_{\C^m} ]= \E [\scal{\Xi, f(\mathbf{X})}_{\C^m} ]. \qedhere
		\]
	\end{proof}
	
	Finally, the classical entropy can be recovered from an appropriate integral of $\mathcal{I}(\mu_t)$, where $\mu_t$ is the convolution of $\mu$ with a Gaussian measure $\gamma_t$ with covariance matrix $tI$.  This formula motivated Voiculescu's definition of $\chi^*$ (we give a conditional version of this in Definition \ref{def: relative nm free ent}), and it will be important for the proof of the main result.  The fact is standard and a proof can be found for instance in \cite[\text{Lemma } 12.1.4]{JekelThesis}.
	
	\begin{lemma} \label{lem: ent from I()}
		Let $\mu$ be a probability measure on $\R^m$ with finite variance and density $\rho$, and let $\gamma_t$ be the centered Gaussian measure with covariance matrix $tI$. Then,
		\[
		h(\mu * \gamma_t) - h(\mu) = \frac{1}{2} \int_0^t \mathcal{I}(\mu * \gamma_s) ds,
		\]
		and
		\[
		h(\mu) = \frac{1}{2} \int_0^\infty \left( \frac{m}{1 + t} - \mathcal{I}(\mu \ast \gamma_t) \right) \ dt + \frac{m}{2} \log(2\pi e).
		\]
	\end{lemma}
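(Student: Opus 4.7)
The first identity is de Bruijn's formula. For $t > 0$, the density $\rho_t$ of $\mu_t := \mu * \gamma_t$ is smooth, strictly positive, and satisfies the heat equation $\partial_t \rho_t = \frac{1}{2}\Delta \rho_t$. Differentiating $h(\mu_t) = -\int \rho_t \log \rho_t\, dx$ in $t$, using $\int \partial_t \rho_t\,dx = 0$, and integrating by parts yields
\[
\partial_t h(\mu_t) = -\frac{1}{2}\int (\Delta \rho_t) \log \rho_t\,dx = \frac{1}{2}\int \frac{|\nabla \rho_t|^2}{\rho_t}\,dx = \frac{1}{2}\mathcal{I}(\mu_t),
\]
where the boundary terms vanish because $\rho_t$ and $\nabla \rho_t$ inherit Gaussian-type tails from the $\gamma_t$ factor. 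The first identity then follows by integrating from $0$ to $t$, once one verifies continuity of $h(\mu_s)$ at $s = 0^+$; this uses lower semicontinuity of $h$ on measures of bounded second moment together with the upper bound from Fact~\ref{fact: entropy controlled by var}, which follows from the finite-variance hypothesis.

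For the second identity, compare $\mu_t$ with the Gaussian $\gamma_{1+t} = \gamma_1 * \gamma_t$. One computes directly $h(\gamma_{1+t}) = \frac{m}{2}\log(2\pi e(1+t))$ and $\mathcal{I}(\gamma_{1+t}) = m/(1+t)$, consistent with de Bruijn. Setting $G(t) := h(\mu_t) - h(\gamma_{1+t})$, applying the first identity to both $\mu_t$ and $\gamma_{1+t}$ gives $G'(t) = \tfrac{1}{2}[\mathcal{I}(\mu_t) - m/(1+t)]$. Integrating from $0$ to $\infty$ and rearranging,
\[
h(\mu) - \frac{m}{2}\log(2\pi e) = G(0) = G(\infty) + \frac{1}{2}\int_0^\infty \left[\frac{m}{1+t} - \mathcal{I}(\mu_t)\right]dt,
\]
so the claimed formula reduces to showing $G(\infty) = 0$, i.e., $h(\mu_t) - \frac{m}{2}\log(2\pi e(1+t)) \to 0$ as $t \to \infty$.

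The main obstacle is this vanishing of $G(\infty)$, which I would handle via a scaling argument. After a harmless translation, assume $\mu$ is centered. Let $X \sim \mu$ and $Z \sim \gamma_1$ be independent, so that $X + \sqrt{t}\,Z \sim \mu_t$. Since entropy transforms under scaling as $h(cY) = h(Y) + m\log|c|$,
\[
h(\mu_t) = \frac{m}{2}\log(1+t) + h\!\left( \frac{X + \sqrt{t}\,Z}{\sqrt{1+t}} \right).
\]
As $t \to \infty$, the rescaled variable on the right converges in $L^2$ to $Z$ because $\mu$ has finite variance; its density is the convolution of the pushforward of $\mu$ by $x\mapsto x/\sqrt{1+t}$ with a Gaussian of variance $t/(1+t)$, and hence admits Gaussian upper and lower envelopes that are uniform in large $t$. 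Dominated convergence on $\int \rho \log \rho$ then yields $h((X+\sqrt{t}\,Z)/\sqrt{1+t}) \to h(Z) = \frac{m}{2}\log(2\pi e)$, giving $G(\infty) = 0$ and completing the proof. The delicate point throughout is justifying the interchange of limits and integrals — the boundary behavior at $s = 0^+$ in step one, and the dominated-convergence step for entropy at $t = \infty$ — both of which rely crucially on the finite-variance hypothesis together with the regularization provided by the Gaussian convolution.
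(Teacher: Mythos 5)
The paper itself does not prove this lemma; it defers to \cite[Lemma 12.1.4]{JekelThesis}, and your outline follows the same standard route (de Bruijn's identity, then comparison of $\mu*\gamma_t$ with $\gamma_{1+t}$ as $t\to\infty$), so the skeleton is the right one. However, two of your key justifications fail exactly for the measures the lemma allows, namely finite-variance $\mu$ with heavy (polynomial) tails. First, $\rho_t=\rho*g_t$ does \emph{not} ``inherit Gaussian-type tails from the $\gamma_t$ factor'': convolution only fattens tails, so if $\rho$ decays polynomially then so does $\rho_t$, and the vanishing of the boundary terms in the integration by parts (the genuinely delicate point in de Bruijn's identity) needs a truncation/cutoff argument rather than the stated tail bound. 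Second, and more seriously since you flag $G(\infty)=0$ as the main obstacle: the density of $V_t=(X+\sqrt{t}\,Z)/\sqrt{1+t}$ does have a Gaussian \emph{lower} envelope uniformly in $t\ge 1$, but in general it has no uniform Gaussian \emph{upper} envelope --- for heavy-tailed $\mu$ the pushforward of $\mu$ under $x\mapsto x/\sqrt{1+t}$ forces only polynomial decay of $q_t(v)$ in $|v|$ --- so the dominated convergence step as written does not go through. (Also, the semicontinuity needed at $s=0^+$ is \emph{upper} semicontinuity of $h$ under weak convergence with controlled second moments, i.e.\ lower semicontinuity of relative entropy, combined with the monotonicity $h(\mu_s)\ge h(\mu)$; differential entropy is not lower semicontinuous in this setting.)

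The gap at $t=\infty$ is easily repaired without any dominated convergence: since adding an independent summand can only increase entropy, and by Fact \ref{fact: entropy controlled by var},
\[
\frac{m}{2} \log\Bigl(2\pi e\,\tfrac{t}{1+t}\Bigr) \;=\; h\Bigl(\sqrt{\tfrac{t}{1+t}}\,Z\Bigr) \;\le\; h(V_t) \;\le\; \frac{m}{2}\log\bigl(2\pi e \operatorname{Var}(V_t)/m\bigr),
\]
and $\operatorname{Var}(V_t)=(\operatorname{Var}(X)+mt)/(1+t)\to m$, so $h(V_t)\to\frac{m}{2}\log(2\pi e)$; with your scaling identity this gives $h(\mu_t)-\frac{m}{2}\log\bigl(2\pi e(1+t)\bigr)\to 0$. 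With that replacement, a correct treatment of the boundary terms (or a citation for de Bruijn's identity in the generality of finite second moment), the $s\to0^+$ continuity handled via monotonicity plus upper semicontinuity, and the integral over $[0,\infty)$ obtained as the limit of integrals over $[0,T]$ (the integrand need not be absolutely integrable a priori, and one should also account for the possibility $h(\mu)=-\infty$), your argument becomes a complete proof along the same lines as the cited reference.
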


	Fisher information has the following property with respect to sums of independent random variables.  This fact is standard and quick to prove (see for instance \cite[Lemma 12.1.3]{JekelThesis}).
	
	\begin{lemma} \label{lem: Fisher info convolution}
		Let $\mu$ and $\nu$ be measures in $\R^m$.  Then $\mathcal{I}(\mu * \nu) \leq \min(\mathcal{I}(\mu),\mathcal{I}(\nu))$.
	\end{lemma}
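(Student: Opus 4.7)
The plan is to realize $\mu * \nu$ as the law of $X+Y$ where $X \sim \mu$ and $Y \sim \nu$ are independent on some probability space, and then project the score function for $X$ onto the $\sigma$-algebra generated by $X+Y$. Without loss of generality assume $\mathcal{I}(\mu) < \infty$, since otherwise there is nothing to prove, and let $\Xi_X$ be the score function for $X$. Define
\[
\Xi := \E[\Xi_X \mid X + Y].
\]
I claim that $\Xi$ is a score function for $X+Y$ in the sense of Definition \ref{def: classical score function}.

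To verify the integration-by-parts identity \eqref{cl-score-fxn-eqn} for $\Xi$ with respect to $X+Y$, fix $f \in C_c^\infty(\R^m;\C^m)$ and condition on $Y$. Conditionally on $Y=y$, the random variable $X$ still has law $\mu$, and the function $x \mapsto f(x+y)$ lies in $C_c^\infty(\R^m;\C^m)$ with gradient $(Df)(x+y)$. Applying the score-function identity for $X$ to this shifted test function and then integrating out $Y$ using Fubini yields
\[
\E[\nabla \cdot f(X+Y)] = \E[\ip{\Xi_X, f(X+Y)}_{\C^m}] = \E[\ip{\Xi, f(X+Y)}_{\C^m}],
\]
where the last equality uses the tower property together with the fact that $f(X+Y)$ is $\sigma(X+Y)$-measurable. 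For the closure condition, $\Xi$ is by construction $\sigma(X+Y)$-measurable and square-integrable (being a conditional expectation of an $L^2$ random variable), and such functions can be approximated in $L^2$ by elements of the form $g(X+Y)$ with $g \in C_c^\infty(\R^m)$ by standard density arguments (approximate by bounded measurable, then by continuous compactly supported, then mollify).

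With $\Xi$ identified as a score function for $X+Y$, the conclusion follows from conditional Jensen's inequality (or equivalently the contractivity of $L^2$-projection):
\[
\mathcal{I}(\mu * \nu) = \E[|\Xi|^2] = \E\bigl[\,|\E[\Xi_X \mid X+Y]|^2\,\bigr] \leq \E[|\Xi_X|^2] = \mathcal{I}(\mu).
\]
The symmetric argument, projecting the score function of $Y$ instead, gives $\mathcal{I}(\mu * \nu) \leq \mathcal{I}(\nu)$, and taking the minimum proves the claim.

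I expect the only delicate point to be the justification that $\Xi$ lies in the required $L^2$-closure of $\{g(X+Y) : g \in C_c^\infty(\R^m)\}$ rather than merely being $\sigma(X+Y)$-measurable; this is a standard measure-theoretic density step but is the one place where the specific formulation of Definition \ref{def: classical score function} matters. Everything else is a routine application of conditional expectations and Fubini.
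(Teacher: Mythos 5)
Your proof is correct and is essentially the standard projection argument that the paper itself does not spell out but defers to \cite[Lemma 12.1.3]{JekelThesis}: realize $\mu*\nu$ as the law of $X+Y$ with $X,Y$ independent, take $\Xi = \E[\Xi_X \mid X+Y]$, verify the integration-by-parts identity by shifting the test function and using Fubini, and conclude by contractivity of conditional expectation in $L^2$. The closure point you flag is indeed harmless: a square-integrable $\sigma(X+Y)$-measurable variable is $h(X+Y)$ with $h \in L^2(\mu*\nu)$, and $C_c^\infty(\R^m)$ is dense in $L^2$ of any finite Borel measure on $\R^m$, so $\Xi$ lies in the required closure and your argument goes through as written.
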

	
	\begin{corollary} \label{cor: Fisher info decreasing}
		Let $\mu$ be a probability measure on $\R^m$ and let $\gamma_t$ be the standard Gaussian measure with covariance matrix $tI$.  Then $t \mapsto \mathcal{I}(\mu * \gamma_t)$ is decreasing.
	\end{corollary}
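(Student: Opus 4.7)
The plan is to deduce the corollary directly from Lemma \ref{lem: Fisher info convolution} using the semigroup property of Gaussian convolution. This is a short argument; there is essentially no obstacle beyond recognizing the right decomposition.

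First I would fix $0 \leq s < t$ and use the well-known convolution identity $\gamma_t = \gamma_s * \gamma_{t-s}$, which follows from the fact that the sum of two independent centered Gaussians in $\R^m$ with covariances $sI$ and $(t-s)I$ is a centered Gaussian with covariance $tI$. By associativity of convolution,
\[
\mu * \gamma_t \;=\; \mu * (\gamma_s * \gamma_{t-s}) \;=\; (\mu * \gamma_s) * \gamma_{t-s}.
\]

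Next I would apply Lemma \ref{lem: Fisher info convolution} with the measures $\mu * \gamma_s$ and $\gamma_{t-s}$, giving
\[
\mathcal{I}(\mu * \gamma_t) \;=\; \mathcal{I}\bigl((\mu * \gamma_s) * \gamma_{t-s}\bigr) \;\leq\; \mathcal{I}(\mu * \gamma_s),
\]
which is exactly the monotonicity claim. The $s = 0$ case is included (with $\mu * \gamma_0 = \mu$ interpreted as the limit, or simply by replacing $s$ with a small positive parameter and using that convolution with $\gamma_s$ only makes Fisher information finite and well-behaved).

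Since this is a two-line argument, the only thing to double-check is that Lemma \ref{lem: Fisher info convolution} is applicable without any smoothness/finiteness hypothesis on $\mu * \gamma_s$; but the lemma as stated takes arbitrary measures on $\R^m$, so no extra care is needed. Thus the corollary is immediate.
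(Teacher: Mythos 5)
Your argument is correct and is exactly the paper's proof: write $\gamma_t = \gamma_s * \gamma_{t-s}$ and apply Lemma \ref{lem: Fisher info convolution} to $\mu * \gamma_s$ and $\gamma_{t-s}$, giving $\mathcal{I}(\mu*\gamma_t) \leq \mathcal{I}(\mu*\gamma_s)$. Nothing further is needed.
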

	
	This follows immediately from the previous lemma since if $s < t$, then $\mathcal{I}(\mu * \gamma_t) = \mathcal{I}(\mu * \gamma_s * \gamma_{t-s}) \leq \mathcal{I}(\mu * \gamma_s)$.
	
	\subsection{Classical Conditional Entropy and Fisher Information} \label{sec: cond fisher info}
	
	\begin{definition}
		Suppose that $\bX = (X_1,\dots,X_m)$ and $\bY = (Y_1,Y_2,\dots)$ are random variables on some probability space $\Omega$.  If the joint distribution of $(\bX, \bY)$ has a disintegration as \\ $\rho(x \mid y) \,dx_1\,\dots\,dx_m \,d\mu(y)$ for some measure $\mu$ on $\R^{\N}$, then we define the \textit{conditional entropy}
		\[
		h(\bX \mid \bY) = - \int_{\Omega_Y} \int_{\Omega_X} \rho(x\mid y) \log \rho(x\mid y) \, dx_1\, \dots \,dx_m \, d\mu(y).
		\]
		If no such disintegration exists, we set $h(\mathbf{X} \mid \mathbf{Y}) = -\infty$.
	\end{definition}

	\begin{definition} \label{def: conditional fisher info} 
		If there is a random vector $\Xi \in L^2(\Omega)^m$ satisfying the integration-by-parts relation:
		for any $n \in \N$, any smooth compactly supported function $f: \R^m \times \R^n \rightarrow \R^m$, and any indices $i_1, \ldots, i_n$,
		\[
		\E \scal{\Xi, f(\bX,Y_{i_1}, \ldots Y_{i_n})} = \E[ \operatorname{div}_X f(\bX,Y_{i_1}, \ldots, Y_{i_n})],
		\]
		then $\Xi$ is unique in $L^2(\Omega)^m$ and is defined by evaluating $- \nabla_\bX \log(\rho_{\bX \mid 
			\bY})$ on the random variable $(\bX, \bY)$. In this case, we call $\Xi$ the \textit{score function for $\bX$ given $\bY$}.
		We also define the \textit{conditional Fisher information} by 
		\[
		\mathcal{I}(X \mid Y) = \E |\Xi|^2, 
		\]
		whenever the above quantity exists, and $\infty$ otherwise.
	\end{definition}
	
	When $(\bX, \bY)$ is a random variable in $\R^m \times \R^n$, the conditional Fisher information describes the rate of change of $h(\bX + t^{1/2} \bS \mid  \bY)$, where $\bS$ is a Gaussian random variable in $\R^m$ with covariance matrix $I_m$, independent from $(\bX,\bY)$. More concretely, one can show that  $\mathcal{I}(\bX + t^{1/2} \bS \mid  \bY)$ is well-defined and finite for $t > 0$ and
	\[
	\frac{d}{dt} h \left( \bX + t^{1/2} \bS \mid \bY \right) = \frac{1}{2} \mathcal{I}\left( \bX + t^{1/2} \bS \mid \bY \right).
	\]
	
	\subsection{Normalization for random matrix theory} \label{subsec: normalization}
	
	In random matrix theory, we consider probability measures on $M_n(\C)_{\sa}^m$, the space of $m$-tuples of self-adjoint matrices.  Per Notation \ref{not: matrix space}, $M_n(\C)_{\sa}^m$ is viewed as a real inner product space of dimension $mn^2$ with the inner product $\ip{\cdot,\cdot}_{\tr_n}$.  Thus, concepts such as Lebesgue measure, gradient, divergence, entropy, and Fisher's information are defined for random matrices by coordinatizing $M_n(\C)_{\sa}^m$ using an orthonormal basis with respect to $\ip{\cdot,\cdot}_{\tr_n}$.  We remark that coordinatizing using $\ip{\cdot,\cdot}_{\Tr_n}$ versus $\ip{\cdot,\cdot}_{\tr_n}$ yields \emph{different} normalizations for some of these quantities.  Indeed, an orthonormal basis for $M_n(\C)_{\sa}$ with respect to $\ip{\cdot,\cdot}_{\tr_n}$ would be
	\[
	\mathcal{E} = \{\sqrt{n} E_{j,j}\}_{j=1}^n \cup \{\sqrt{n/2}(E_{j,k} + E_{k,j}) \}_{1 \leq j < k \leq n} \cup \{i \sqrt{n/2}(E_{j,k} - E_{k,j}) \}_{1 \leq j < k \leq n},
	\]
	while an orthonormal basis with respect to $\ip{\cdot,\cdot}_{\Tr_n}$ would remove the $\sqrt{n}$ factors.  Thus, the convention using $\Tr_n$ is most convenient for entrywise computations in random matrix theory, but we will follow the convention using $\tr_n$ because the normalized trace is what relates to the free probabilistic limit as $n \to \infty$, and most of our computations are coordinate-free, i.e.\ we work with inner products and linear transformations and do not need to refer to the matrix entries.
	
	In any case, a dimensional renormalization of the entropy, score function, and Fisher information are needed in order to discuss the large-$n$ limit.  One can see this for instance from computing the entropy of a GUE; see \cite[Appendix B]{ST2022}.
	
	\begin{definition}
		Let $(\bX, \bY)$ be a random variable in $M_n(\C)_{sa}^m \times M_n(\C)_{sa}^\N$, with density $\rho_{\bX,\bY}$. Then, define the \textit{normalized conditional entropy} by
		\[
		h^{(n)}(\bX \mid \bY) := \frac{1}{n^2} h(\bX \mid \bY) + m \log n,
		\]
		and the \textit{normalized conditional Fisher information} as $\mathcal{I}^{(n)}(\bX \mid \bY) := n^{-4} \mathcal{I}(\bX \mid \bY)$, where $h$ and $\mathcal{I}$ are understood with respect to the inner product $\ip{\cdot,\cdot}_{\tr_n}$ coming from the normalized trace.
	\end{definition}
	
	Further detail about this normalization can be found in \cite[\S 16]{JekelThesis} and \cite[Appendix B]{ST2022}, which both take the classical quantities to be defined based on $\ip{\cdot,\cdot}_{\tr_n}$.  If the classical quantities are instead defined based on $\ip{\cdot,\cdot}_{\Tr_n}$, the equations change slightly.  Denoting (for the moment) the two versions of classical entropy and Fisher information by subscripts of $\tr_n$ and $\Tr_n$ respectively, we have
	\begin{align*}
		h^{(n)} &= \frac{1}{n^2} h_{\tr_n} + m \log n = \frac{1}{n^2} h_{\Tr_n} + \frac{m}{2} \log n \\
		\mathcal{I}^{(n)} &= \frac{1}{n^4} \mathcal{I}_{\tr_n} = \frac{1}{n^3} \mathcal{I}_{\Tr_n}.
	\end{align*}
	For the $\Tr_n$ version, see for instance \cite[Section 6.2]{Jekel_ConditionalEntropy2020}.  An advantage of basing everything on $\ip{\cdot,\cdot}_{\tr_n}$ as we do here is that $n^2 = \dim_{\R} M_n(\C)_{\sa}$ appears in a natural way in the formula.
	
	Using the normalized versions, the relationship between entropy and Fisher information can be stated as follows:
	\[
	\partial_t h^{(n)}(\bX + t^{1/2} \bS \mid  \bY) = \frac{1}{2n^4} \mathcal{I}(\bX + t^{1/2} \bS \mid \bY) = \frac{1}{2} \mathcal{I}^{(n)}(\bX + t^{1/2} \bS \mid  \bY),
	\]
	assuming $\bX$ has finite variance and $t > 0$.  To prove this, we must relate the GUE tuple $\mathbf{S}$ with the standard Gaussian vector in the inner product space $M_n(\C)_{\sa}^m$ with $\ip{\cdot,\cdot}_{\tr_n}$.  A standard Gaussian vector $\mathbf{G}$ can be constructed as
	\[
	\mathbf{G} = \sum_{E \in \mathcal{E}} Z_E E,
	\]
	where $\mathcal{E}$ is an orthonormal basis for $M_n(\C)_{\sa}^m$ with $\ip{\cdot,\cdot}_{\tr_n}$, and $Z_E$ are independent standard normal random variables.\footnote{Note that the definition of standard Gaussian vector depends on the choice of inner product. 
		When working with $\ip{\cdot,\cdot}_{\tr_n}$ rather than $\ip{\cdot,\cdot}_{\Tr_n}$, the standard Gaussian vector $\mathbf{G}$ will have matrix entries with variance $n$.  That is, the standard Gaussian vector in $M_n(\C)_{\sa}$ with $\ip{\cdot,\cdot}_{\tr_n}$ is $\sqrt{n}$ times the standard Gaussian vector in $M_n(\C)_{\sa}^m$ with $\ip{\cdot,\cdot}_{\Tr_n}$ which would have entries of variance $1$.}  One thus has that the total variance $\mathbb{E} \norm{\mathbf{G}}_2^2 = \dim_{\R} M_n(\C)_{\sa}^m = mn^2$.  A GUE $m$-tuple is thus obtained as $\bS = (1/n) \mathbf{G}$.  Hence,
	\[
	\partial_t h^{(n)}(\bX + t^{1/2} \bS \mid  \bY) = \frac{1}{n^2} \partial_t h(\bX + (t/n^2)^{1/2} \mathbf{G} \mid \bY ) = \frac{1}{2n^4} \mathcal{I}(\mathbf{X} + (t/n^2)^{1/2} \mathbf{G} \mid \bY).
	\]
	As a consequence,
	\begin{equation}\label{eqn: increments of h is integral of I()}
		h^{(n)}(\bX + t^{1/2} \bS \mid  \bY) - h^{(n)}(\bX \mid \bY) = \frac{1}{2} \int_0^t \mathcal{I}^{(n)} (\bX + u^{1/2} \bS \mid  \bY) \, du,
	\end{equation}
	and we may also recover the conditional entropy from the conditional Fisher information via the following integral formula, exactly as in Lemma \ref{lem: ent from I()}:
	\begin{equation} \label{eqn: recover conditional entropy from conditional I()}
		h^{(n)}(\bX \mid \bY) = \frac{1}{2} \int_0^\infty \left( \frac{m}{1+t} - \mathcal{I}^{(n)}(\bX + t^{1/2} \bS \mid \bY) \right) \, dt + \frac{m}{2} \log 2 \pi e.
	\end{equation}
	
	\section{Conditional Microstates Free Entropy} \label{sec: conditional microstates}
	
	\subsection{Definition and properties}
	
	Conditional microstates entropy $\chi(X_1, \dots, X_n \mid Y_1,\dots,Y_m)$ was first defined by Voiculescu \cite{VoiculescuFE2}.  Later Shlyakhtenko \cite{Shlyakhtenko2002} gave a different definition using the supremum of measures of relative microstate spaces over the microstates for $Y$, rather than the average as Voiculescu had done.  We will define the conditional entropy with respect to a fixed embedding into a matrix ultraproduct, and we will show that taking the supremum over the embedding and the ultrafilter gives Shlyakhtenko's conditional microstates entropy (Lemma \ref{lem: Shl-sup}).  Moreover, in Section \ref{subsec: conditional microstates} we will describe the relationship between conditional microstates entropy and conditional classical entropy analogously to the random matrix interpretation of microstates free entropy given in \cite[Proposition B.7]{ST2022}.
	
	\begin{definition}[Conditional microstates free entropy via fixed relative microstates] \label{def: cond ms free ent}
		Let $(\cM, \tau)$ be a tracial von Neumann algebra, with $\mathcal{B} \subseteq \cM$ a separable von Neumann subalgebra.
		Let ${\bX = (X_1, \ldots, X_m)}$ be an $m$-tuple of self-adjoint elements in $(\cM, \tau)$, and fix a tuple of generators $\bY = (Y_j)_{j \in \N}$ for $\mathcal{B}$.  Let $\mu := \text{law}(\bX, \bY)$, and fix a sequence $\bY^{(n)} \in M_n(\C)^\N_{\sa}$ that converges in non-commutative law to $\bY$ with $\sup_n \norm{Y_j^{(n)}}_{\op} < \infty$ for each $j$.
		Then for any neighborhood $\mathcal{O}$ of $\mu$, and any tuple $\mathbf{R} = (R_1, R_2, \ldots, R_m) \in (0, \infty)^m$, we define the \textit{conditional microstate spaces}:
		\begin{multline*}
			\Gamma_{\mathbf{R}}( \mathcal{O}  \mid  \bY^{(n)} \rightsquigarrow \bY ) := \Big\{ \bX^{(n)} \in M_n(\C)_{\sa}^m \ \big| \ \norm{X_j^{(n)}}_{\op} \leq R_j \text{ for all } 1 \leq j \leq m, 
			\text{ and } \text{law}(\bX^{(n)}, \bY^{(n)}) \in \mathcal{O} \Big\},
		\end{multline*}
		where $\nu_n$ denotes the Lebesgue measure on $M_n(\C)_{\sa}^m$ given in Notation \ref{not: matrix space}.
		
		We define the \textit{conditional microstates free entropy of $\bX$ given $\bY^{(n)} \rightsquigarrow \bY$}, \\
		denoted $\chi(\bX  \mid  \bY^{(n)} \rightsquigarrow \bY)$ via:
		\[
		\chi_{\mathbf{R}}(\bX  \mid  \bY^{(n)} \rightsquigarrow \bY) := \inf_{\mathcal{O} \text{ nbhd of } \mu} \  \limsup_{n \rightarrow \infty} \left[ \frac{1}{n^2} \log \nu_n( \Gamma_{\mathbf{R}}^{(n)}( \mathcal{O}  \mid  \bY^{(n)} \rightsquigarrow \bY)) + m \log n \right].
		\]
		Finally, define 
		\[
		\chi(\bX  \mid  \bY^{(n)} \rightsquigarrow \bY) := \sup_{\mathbf{R} \in (0,\infty)^{m}} \chi_{\mathbf{R}}(\bX  \mid  \bY^{(n)} \rightsquigarrow \bY).
		\]
		Also, for any non-principal ultrafilter $\mathcal{U}$ of $\N$, we define $\chi^{\cU}(\bX  \mid  \bY^{(n)} \rightsquigarrow \bY)$ to be the same expression as above, with the $\limsup_{n \rightarrow \infty}$ replaced with $\lim_{n \rightarrow \cU}$.
	\end{definition}
	
	We now show that the value of $\chi_{\mathbf{R}}(\bX \mid \bY^{(n)} \rightsquigarrow \bY)$ does not depend on the parameter $\mathbf{R}$ as long as $\norm{X_j}_{\op} \leq R_j$.
	
	\begin{lemma} \label{lemma: Chi_R = Chi}
		Set $r_j = \norm{X_j}_{\op}$ and $\mathbf{r} = (r_1, \ldots, r_m)$.
		Then if $\mathbf{R}, \mathbf{R}' \in (0, \infty)^m$ are such that $r_j < R_j < R'_j$ for each $j = 1, \ldots, m$, then 
		\[
		\chi_{\mathbf{R}}(\bX  \mid  \bY^{(n)} \rightsquigarrow \bY) = \chi_{\mathbf{R}'}(\bX  \mid  \bY^{(n)} \rightsquigarrow \bY).
		\]
		Hence if $R_j > r_j$ for all $j$, then
		\[
		\chi_\mathbf{R}(\bX  \mid  \bY^{(n)} \rightsquigarrow \bY) = \chi(\bX  \mid  \bY^{(n)} \rightsquigarrow \bY).
		\]
		
		\begin{proof}
			This is the conditional analogue to \cite[Proposition 2.4]{VoiculescuFE2}; since the approximating sequence $\bY^{(n)}$ is fixed, the same argument showing inclusion of microstate spaces works: one can define an appropriate piecewise linear function $g$ such that for any $n \in \N$ and neighborhood $\mathcal{O}$ of $\mu$, there is some $n' \in \N$ and another neighborhood $\mathcal{O}'$ of $\mu$ such that
			\[
			G(\Gamma_{\mathbf{R}}^{(n')}( \mathcal{O}'  \mid  \bY^{(n)} \rightsquigarrow \bY )) \subseteq \Gamma_{\mathbf{R}'}^{(n)}( \mathcal{O}  \mid  \bY^{(n)} \rightsquigarrow \bY ),
			\]
			where $G(X_1, \ldots, X_n) = (g(X_1), \ldots, g(X_n))$ is applied componentwise in the matrices.
			This establishes that $\chi_{\mathbf{R}'}(\bX  \mid  \bY^{(n)} \rightsquigarrow \bY) \leq \chi_{\mathbf{R}}(\bX  \mid  \bY^{(n)} \rightsquigarrow \bY)$. The opposite inequality follows easily from the fact that $\mathbf{R} < \mathbf{R}'$ implies $\Gamma^{(n)}_{\mathbf{R}} ( \mathcal{O}  \mid  \bY^{(n)} \rightsquigarrow \bY)) \subseteq \Gamma^{(n)}_{\mathbf{R}'}( \mathcal{O}  \mid  \bY^{(n)} \rightsquigarrow \bY))$ for any fixed $\mathcal{O}$ and $n \in \N$.
		\end{proof}
	\end{lemma}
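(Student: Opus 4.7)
I would prove the two inequalities separately. The direction $\chi_{\mathbf{R}}(\bX \mid \bY^{(n)} \rightsquigarrow \bY) \leq \chi_{\mathbf{R}'}(\bX \mid \bY^{(n)} \rightsquigarrow \bY)$ for $\mathbf{R} \leq \mathbf{R}'$ is immediate from the inclusion $\Gamma_{\mathbf{R}}^{(n)}(\mathcal{O} \mid \bY^{(n)} \rightsquigarrow \bY) \subseteq \Gamma_{\mathbf{R}'}^{(n)}(\mathcal{O} \mid \bY^{(n)} \rightsquigarrow \bY)$, valid for every neighborhood $\mathcal{O}$ of $\mu$ and every $n$; monotonicity of $\nu_n$ followed by $\limsup$ and $\inf$ gives the inequality.

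For the reverse inequality $\chi_{\mathbf{R}'} \leq \chi_{\mathbf{R}}$ when $r_j < R_j < R_j'$, I would follow the template of \cite[Proposition 2.4]{VoiculescuFE2}. The plan is to pick for each $j$ a $1$-Lipschitz piecewise linear function $g_j \colon \R \to \R$ with $g_j(t) = t$ on $[-R_j, R_j]$ and $|g_j(t)| \leq R_j$ everywhere, and to define $G \colon M_n(\C)_{\sa}^m \to M_n(\C)_{\sa}^m$ by applying $g_j$ to the $j$-th coordinate via functional calculus. Because $\|X_j\|_\op = r_j < R_j$, a moment argument (for instance, comparing $\tr_n((X_j^{(n)})^{2k})$ against $\tau(X_j^{2k})$ for large $k$, using Markov's inequality applied to the eigenvalue counting measure) shows that for $\mathcal{O}'$ sufficiently small, any $\bX^{(n)} \in \Gamma_{\mathbf{R}'}^{(n)}(\mathcal{O}' \mid \bY^{(n)} \rightsquigarrow \bY)$ has only $o(n)$ eigenvalues of each $X_j^{(n)}$ outside $[-R_j, R_j]$. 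Consequently $G(\bX^{(n)}) - \bX^{(n)}$ has rank $o(n)$ and operator norm $\leq \max_j R_j'$, hence $2$-norm $o(1)$, and so $G(\bX^{(n)}) \in \Gamma_{\mathbf{R}}^{(n)}(\mathcal{O})$ for an appropriate enlargement $\mathcal{O}$ of $\mathcal{O}'$.

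The main obstacle is that $G$ is a contraction and not a bijection, so the inclusion $G(\Gamma_{\mathbf{R}'}^{(n)}(\mathcal{O}')) \subseteq \Gamma_{\mathbf{R}}^{(n)}(\mathcal{O})$ does not directly yield a measure comparison. The standard fix is the inclusion $\Gamma_{\mathbf{R}'}^{(n)}(\mathcal{O}') \subseteq \Gamma_{\mathbf{R}}^{(n)}(\mathcal{O}) + E_n$, where $E_n \subseteq M_n(\C)_{\sa}^m$ consists of self-adjoint matrix tuples of rank $o(n)$ and operator norm bounded by $\max_j R_j'$. The set $E_n$ admits a Lebesgue volume bound of $\exp(o(n^2))$, obtained by covering it via unitary orbits of small matrices (rank-$k$ parameter space has real dimension $O(nk)$ which is $o(n^2)$ when $k = o(n)$). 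Combining the Minkowski-sum volume estimate with this bound gives $\log \nu_n(\Gamma_{\mathbf{R}'}^{(n)}(\mathcal{O}')) \leq \log \nu_n(\Gamma_{\mathbf{R}}^{(n)}(\mathcal{O})) + o(n^2)$, and normalizing by $n^2$ yields $\chi_{\mathbf{R}'} \leq \chi_{\mathbf{R}}$. Since $\bY^{(n)}$ is a fixed deterministic sequence that is not modified by $G$, the conditional nature of the setup plays no role in this volume comparison, and the argument reduces directly to Voiculescu's unconditional case. The final assertion $\chi_{\mathbf{R}} = \chi$ then follows by taking the supremum over $\mathbf{R} > \mathbf{r}$ of a common value.
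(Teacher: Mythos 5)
The easy inclusion $\Gamma_{\mathbf{R}}^{(n)} \subseteq \Gamma_{\mathbf{R}'}^{(n)}$ and the reduction of the final claim to monotonicity are fine and agree with the paper. For the substantive direction $\chi_{\mathbf{R}'} \leq \chi_{\mathbf{R}}$ your route (truncation moves only a small proportion of eigenvalues, so microstates with the larger norm bound are low-rank perturbations of microstates with the smaller bound) is genuinely different from the paper, which simply runs Voiculescu's argument from \cite[Proposition 2.4]{VoiculescuFE2}: there the truncating function can be taken to be a strictly increasing piecewise-linear homeomorphism, so that $G$ is injective on tuples with spectrum in $[-R_j',R_j']$ and the volume distortion is controlled by a Jacobian (divided-difference) estimate, with no sumsets appearing. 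Your alternative can be made to work, but the pivotal volume-comparison step is not valid as you state it.

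Concretely: (i) the set $E_n$ of tuples of rank at most $o(n)$ (for a \emph{fixed} neighborhood $\mathcal{O}'$ the eigenvalue count is really $\leq cn$ with a small constant $c = c(\mathcal{O}',k)$, not $o(n)$, and this constant must be tracked) and bounded operator norm is a lower-dimensional subset of $M_n(\C)_{\sa}^m$, so it has Lebesgue measure zero; the asserted ``Lebesgue volume bound of $\exp(o(n^2))$'' is therefore vacuous and, more importantly, not the relevant quantity. (ii) There is no ``Minkowski-sum volume estimate'' of the form $\nu_n(A+E) \leq \nu_n(A)\,e^{o(n^2)}$ deducible from smallness of $E$: in general $\nu_n(A+E)$ can exceed $\nu_n(A)$ by an arbitrarily large factor (e.g.\ when $A$ is thin precisely in the directions spanned by $E$), so the inclusion $\Gamma_{\mathbf{R}'}^{(n)}(\mathcal{O}') \subseteq \Gamma_{\mathbf{R}}^{(n)}(\mathcal{O}) + E_n$ alone does not yield $\log \nu_n(\Gamma_{\mathbf{R}'}^{(n)}(\mathcal{O}')) \leq \log \nu_n(\Gamma_{\mathbf{R}}^{(n)}(\mathcal{O})) + o(n^2)$. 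What rescues the argument is a covering-number bound rather than a volume bound: cover $E_n$ by $\exp(O(c\,n^2\log(1/\delta)))$ balls of radius $\delta$ in \emph{operator} norm (a $\norm{\cdot}_2$-ball of radius $\delta$ contains matrices of operator norm $\delta\sqrt{n}$ and would destroy both the norm constraint and the moment constraints), use translation invariance of $\nu_n$, and absorb the residual $\delta$-ball into a slightly enlarged microstate space; for this last step you need slack in the truncation, e.g.\ $|g_j| \leq R_j - \eta$ with $r_j < R_j - \eta$, so that adding an operator-norm-$\delta$ perturbation still lands in $\Gamma_{\mathbf{R}}^{(n)}(\mathcal{O})$. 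The resulting correction $O(c\,n^2\log(1/\delta))$ in the exponent is then removed by fixing $\delta$ first and letting $\mathcal{O}'$ shrink (so $c \to 0$). As written, your proposal skips exactly these points, so the key inequality is not yet justified.
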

	
	In the rest of the paper, we write $\mathbf{R} > \norm{\bX}_{\op}$ as shorthand for $R_j > \norm{X_j}_{\op}$ for all $j = 1, \ldots m$.
	We now show that the value of the conditional microstates free entropy does not depend upon the choice of approximating microstates $\bY^{(n)}$, but rather only on the choice of induced embedding.
	
	\begin{definition}[Induced embedding] \label{def: induced emb}
		Let $\mathbb{M}_\cU := \prod_\cU M_n(\C)$ be a matrix ultraproduct (as in Fact \ref{fact/def: matrix ultraproduct}). Given $\bY$ a tuple of generators for a von Neumann subalgebra $(\mathcal{B}, \tau) \subseteq (\mathcal{M}, \tau)$, and a sequence of approximating matrix tuples $\bY^{(n)} \in M_n(\C)^\N_{\sa}$ converging to $\bY$ in non-commutative law, we define the \emph{induced embedding} $\iota_{\bY^{(n)} \rightsquigarrow \bY}$ as the unique trace-preserving $*$-homomorphism $\mathcal{B} \rightarrow \mathbb{M}_{\cU}$ such that $Y_j \mapsto [Y_j^{(n)}]_\cU$, which is well-defined by Fact \ref{fact/def: matrix ultraproduct}.
	\end{definition}
	
	We want to show, similarly to \cite[Theorem 2.15]{Shlyakhtenko2002}, that our conditional microstates free entropy does not depend upon the choice of generators for the subalgebra $\cB$. However, our definition of conditional microstates free entropy works with fixed microstates $\bY^{(n)}$ for the generators $\bY$, while the relative entropy in \cite{Shlyakhtenko2002} takes a supremum over possible values of $\bY^{(n)}$. Thus, we record that our conditional microstates free entropy $\chi^\cU (\bX \mid \bY^{(n)} \rightsquigarrow \bY)$ depends only on the induced embedding rather than the specific choice of $\bY^{(n)}$ and $\bY$.
	
	\begin{lemma}[Conditional microstates free entropy depends only on the subalgebra and the embedding]
		\label{lemma: cond microstates f.e. embedding}
		Let $\bY$ and $\bZ$ be countably infinite tuples of generators for the same subalgebra $(\mathcal{B}, \tau)$ of $(\mathcal{M}, \tau)$. 
		For any sequences of approximating microstates $\bY^{(n)}$ and $\bZ^{(n)}$ for $\bY$ and $\bZ$ respectively that give rise to the same induced embedding, i.e.
		$\iota := \iota_{\bY^{(n)} \rightsquigarrow \bY} = \iota_{\bZ^{(n)} \rightsquigarrow \bZ}$,
		we have
		\[
		\chi^\cU (\bX  \mid  \bY^{(n)} \rightsquigarrow \bY) = \chi^\cU(\bX  \mid  \bZ^{(n)} \rightsquigarrow \bZ).
		\]
		In other words, the conditional microstates free entropy depends only on the subalgebra $\mathcal{B}$ and the choice of embedding $\iota: \mathcal{B} \rightarrow \mathbb{M}_\cU$.
		
		\begin{proof}
			Since the hypotheses on $\mathbf{Y}$ and $\mathbf{Z}$ are symmetric, it suffices to establish the inequality \\
			${\chi^\cU(\bX \mid \bY^{(n)} \rightsquigarrow \bY) \leq \chi^\cU(\bX \mid \bZ^{(n)} \rightsquigarrow \bZ)}$.  From the definitions of these quantities, it is enough to show that for any neighborhood $\cO$ of law$(\bX, \bZ)$, and any $\mathbf{R} > \norm{\bX}_{\op}$ there is some neighborhood $\Tilde{\cO}$ of law$(\bX, \bY)$ satisfying
			\begin{equation} \label{eqn: set inclusion}
				\Gamma_{\mathbf{R}}^{(n)} ( \Tilde{\cO} \mid \bY^{(n)} \rightsquigarrow \bY) \subseteq \Gamma_{\mathbf{R}}^{(n)} ( \cO \mid \bZ^{(n)} \rightsquigarrow \bZ).
			\end{equation}
			Without loss of generality, we can take $\cO$ to be an element of the neighborhood basis for the weak-$*$ topology on the space of laws; in other words, assume there is some finite collection of non-commutative $\ast$-polynomials $\{q_1(\mathbf{x}, \mathbf{y}), \ldots, q_k(\mathbf{x}, \mathbf{y})\}$ and some tolerances $\varepsilon_j > 0$ so that
			\begin{align*}
				\Gamma_{\mathbf{R}}^{(n)} ( \cO \mid \bZ^{(n)} \rightsquigarrow \bZ) = \{\bX^{(n)} \in M_n(\C)_{\sa}^m \ : &\ | \tr_n(q_j(\bX^{(n)}, \bZ^{(n)})) - \tau(q_j(\bX, \bZ)) | < \varepsilon_j \text{ for all } 1 \leq j \leq k \\ 
				&\text{ and } \norm{\bX^{(n)}}_{\op} \leq \mathbf{R}\}.
			\end{align*}
			Set $\varepsilon = \min_{1 \leq j \leq k} \varepsilon_j$.
			
			By Fact \ref{fact: polynomial uniform continuity}, there exists some $\delta > 0$ and finite set $F$ of indices such that for every tracial von Neumann algebra $\cN$, for all $\bX' \in \cN_{\sa}^m$, $\bZ' \in \cN_{\sa}^{\N}$, $\bW' \in \cN_{\sa}^{\N}$ satisfying $\norm{X_i'}_{\op} \leq R$ and ${\max(\norm{\bZ_i'}_{\op},\norm{\bW_i'}_{\op}) \leq \norm{\bZ_i}_{\op}}$, whenever $\norm{\bZ_i' - \bW_i'}_2 < \delta$ for $i \in F$, then
			\[
			\norm{q_i(\bX',\bZ') - q_i(\bX',\bW')}_2 < \frac{\varepsilon}{3}.
			\]
			
			By Lemma \ref{lem: polynomial approximation}, for each $i$, there exists a polynomial $p_i$ such that $\norm{p_i(\bY) - \bZ_i}_2 < \epsilon$ and such that for every tracial von Neumann algebra $\cN$ and every $\bY'$ with $\norm{Y_i'}_{\op} \leq \norm{Y_i}_{\op}$, we have ${\norm{p_i(\bY_i')}_{\op} \leq \norm{Z_i}_{\op}}$.  Because the embeddings induced by $\bY^{(n)}$ and $\bZ^{(n)}$ are the same, we have that for $\cU$-many $n$, for all $i \in F$,
			\[
			\norm{p_i(\bY^{(n)}) - Z_i^{(n)}}_2 < \delta.
			\]
			This implies that for all $\bX^{(n)} \in M_n(\C)_{\sa}^m$ with $\norm{\bX^{(n)}}_{\op} \leq \mathbf{R}$, we have $\norm{p_i(\bY^{(n)})}_{\op} \leq \norm{Z_i}_{\op}$ and hence
			\[
			|\tr_n(q_i(\bX^{(n)},\bZ^{(n)})) - \tr_n(q_i(\bX^{(n)},p_i(\bY^{(n)})))| \leq \norm{q_i(\bX^{(n)},\bZ^{(n)}) - q_i(\bX^{(n)},p_i(\bY^{(n)}))}_2 < \frac{\varepsilon}{3}.
			\]
			Similarly,
			\[
			|\tau(q_i(\bX,\bZ)) - \tau(q_i(\bX,p_i(\bY)))| < \frac{\varepsilon}{3}.
			\]
			In particular, this means that for all $\bX^{(n)} \in M_n(\C)_{\sa}^m$ with $\norm{\bX^{(n)}}_{\op} \leq \mathbf{R}$, we have
			\begin{equation} \label{eqn: triangle for cond microstates f.e. embedding}
				\begin{split}
					|\tr_n(q_j(\bX^{(n)}, \bZ^{(n)})) - \tau(q_j(\bX, \bZ))| &\leq |\tau(q_j(\bX, p(\bY))) - \tau(q_j(\bX, \bZ))| \\
					&\quad + | \tr_n(q_j(\bX^{(n)}, \, \bZ^{(n)})) - \tr_n(q_j(\bX^{(n)}, \, p(\bY^{(n)})))| \\
					&\quad + |\tr_n(q_j(\bX^{(n)}, \, p(\bY^{(n)}))) - \tau(q_j(\bX, \, p(\bY)))| \\
					&\leq \frac{2 \varepsilon}{3} + |\tr_n(q_j(\bX^{(n)}, \, p(\bY^{(n)}))) - \tau(q_j(\bX, \, p(\bY)))|
				\end{split}
			\end{equation}

			
			Finally, to make this last term less than $\varepsilon/3$, we choose $\Tilde{\cO}$. Let $\Tilde{q_j}(\mathbf{x}, \mathbf{y}) = q_j(\mathbf{x}, p(\mathbf{y}))$. Then set $\Tilde{\cO}$ to be the neighborhood of law$(\bX, \bY)$ defined by approximating $\{\Tilde{q_1}, \ldots, \Tilde{q_k}\}$ up to tolerance $\varepsilon/3$. Then by construction, the last term appearing on the left-hand side of \ref{eqn: triangle for cond microstates f.e. embedding} is less than $\varepsilon/3$.
			
			Combining these estimates, we obtain that
			\[
			|\tr_n(q_j(\bX^{(n)}, \bZ^{(n)})) - \tau(q_j(\bX, \bZ))| < \varepsilon
			\]
			for $\cU$-many $n$ whenever $\bX^{(n)} \in \Gamma_{\mathbf{R}}^{(n)} ( \Tilde{\cO} \mid \bY^{(n)} \rightsquigarrow \bY)$. Thus, the claimed set inclusion in \ref{eqn: set inclusion} holds and we establish $\chi^\cU(\bX \mid \bY^{(n)} \rightsquigarrow \bY) \leq \chi^\cU(\bX \mid \bZ^{(n)} \rightsquigarrow \bZ)$ as desired.
		\end{proof}
	\end{lemma}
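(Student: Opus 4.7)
The plan is to exploit symmetry to reduce the claim to the single inequality $\chi^\cU(\bX \mid \bY^{(n)} \rightsquigarrow \bY) \leq \chi^\cU(\bX \mid \bZ^{(n)} \rightsquigarrow \bZ)$. Unpacking Definition \ref{def: cond ms free ent}, it suffices to show that for every basic weak-$*$ neighborhood $\cO$ of $\mathrm{law}(\bX,\bZ)$ and every cutoff $\mathbf{R} > \norm{\bX}_{\op}$, there exists a neighborhood $\widetilde{\cO}$ of $\mathrm{law}(\bX,\bY)$ with
\[
\Gamma_{\mathbf{R}}^{(n)}(\widetilde{\cO} \mid \bY^{(n)} \rightsquigarrow \bY) \subseteq \Gamma_{\mathbf{R}}^{(n)}(\cO \mid \bZ^{(n)} \rightsquigarrow \bZ)
\]
for $\cU$-many $n$. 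Taking $\cO$ to be cut out by finitely many polynomial test functions $q_1,\dots,q_k$ with common tolerance $\varepsilon$, the problem reduces to controlling $|\tr_n(q_j(\bX^{(n)},\bZ^{(n)})) - \tau(q_j(\bX,\bZ))|$ uniformly for admissible $\bX^{(n)}$.

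The central device will be to replace each relevant $Z_i$ with a polynomial $p_i(\bY)$ that approximates it in $2$-norm. Because $\bY$ and $\bZ$ generate the same subalgebra $\cB$, Lemma \ref{lem: polynomial approximation} yields such $p_i$ with the crucial extra feature that $\norm{p_i(\bY')}_{\op} \leq \norm{Z_i}_{\op}$ for \emph{any} tuple $\bY'$ in \emph{any} tracial von Neumann algebra obeying the correct operator-norm cap. Armed with this uniform operator-norm control, Fact \ref{fact: polynomial uniform continuity} supplies a finite index set $F$ and a $2$-norm tolerance $\delta$ so that each $q_j(\mathbf{x},\cdot)$ changes by at most $\varepsilon/3$ when its $\bZ$-arguments are perturbed by at most $\delta$ in $2$-norm at fixed operator-norm cap.

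The hypothesis that the two induced embeddings coincide is then used to transfer $\norm{p_i(\bY) - Z_i}_2 < \delta$ from $\cM$ down to the matrix models: since $\iota_{\bY^{(n)} \rightsquigarrow \bY} = \iota_{\bZ^{(n)} \rightsquigarrow \bZ}$ sends both $Z_i$ and $p_i(\bY)$ to compatible ultralimits, it follows that $\lim_{n \to \cU} \norm{p_i(\bY^{(n)}) - Z_i^{(n)}}_2 < \delta$, hence this estimate holds on a $\cU$-large set of $n$ simultaneously for all $i \in F$. A three-term triangle inequality
\[
|\tr_n(q_j(\bX^{(n)},\bZ^{(n)})) - \tau(q_j(\bX,\bZ))| \leq A_n + B_n + C_n,
\]
interpolating through $\tr_n(q_j(\bX^{(n)},p(\bY^{(n)})))$ and $\tau(q_j(\bX,p(\bY)))$, then closes the argument: the first two terms are bounded by $\varepsilon/3$ by the $2$-norm approximation combined with uniform continuity of $q_j$ (at the matrix and von Neumann levels respectively), and the third is forced below $\varepsilon/3$ by taking $\widetilde{\cO}$ to be the neighborhood enforcing approximation of the composed polynomials $\widetilde{q}_j(\mathbf{x},\mathbf{y}) := q_j(\mathbf{x}, p(\mathbf{y}))$ to tolerance $\varepsilon/3$.

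The main obstacle is the uniform operator-norm control in the polynomial approximation step. A bare Kaplansky-density argument would give $p_i$ with $\norm{p_i(\bY)}_{\op}$ controlled only inside the ambient algebra $\cM$, which would be useless for bounding $\norm{p_i(\bY^{(n)})}_{\op}$, and in turn would prevent the application of Fact \ref{fact: polynomial uniform continuity}. This is precisely why the passage through the universal $\mathrm{C}^*$-completion encoded in Lemma \ref{lem: polynomial approximation} is essential; with that tool in hand, the rest is essentially triangle-inequality bookkeeping combined with non-commutative H\"older estimates on polynomials in operator-norm balls.
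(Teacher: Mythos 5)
Your proposal is correct and follows essentially the same route as the paper's proof: the same reduction via symmetry to a single microstate-set inclusion, the same use of Lemma \ref{lem: polynomial approximation} to replace $Z_i$ by $p_i(\bY)$ with uniform operator-norm control, the transfer of the $2$-norm estimate through the common induced embedding, Fact \ref{fact: polynomial uniform continuity}, and the three-term triangle inequality with $\widetilde{\cO}$ defined by the composed polynomials $q_j(\mathbf{x},p(\mathbf{y}))$ at tolerance $\varepsilon/3$. Your remark on why a bare Kaplansky-density argument would not suffice is exactly the point of the universal $\mathrm{C}^*$-completion in Lemma \ref{lem: polynomial approximation}.
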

	
	This lemma allows us to make the following definition.
	\begin{definition} \label{def: conditional entropy embedding}
		We define the conditional microstates entropy of $\bX = (X_1, \ldots, X_m) \in (\mathcal{M}, \tau)^m_{\sa}$ given the embedding $\iota: \mathcal{B} \rightarrow \mathbb{M}_\cU$ to be
		\[
		\chi^\cU(\bX  \mid \cB, \iota) := \chi^\cU (\bX  \mid  \bY^{(n)} \rightsquigarrow \bY),
		\]
		where $\bY$ is any set of generators of $\mathcal{B}$ and $\bY^{(n)}$ is any sequence of approximating microstates such that $\iota_{\bY^{(n)} \rightsquigarrow \bY} = \iota$.
	\end{definition}
	
	This definition still depends on the choice of embedding $\iota$.  We remark that if $\cB$ is not strongly $1$-bounded in the sense of Jung \cite{ Jung2007S1B} and Hayes \cite{Hayes2018}, then there are embeddings of $\cB$ into a matrix ultraproduct that are not equivalent by an automorphism of the matrix ultraproduct \cite[Theorem 1.1, Corollary 1.4]{JekelCovering}.  However, if we are not concerned with the particular embedding, we may also consider the supremum over all embeddings.
	
	\begin{definition} \label{def: conditional entropy ultrafilter}
		We define the conditional microstates entropy of $\bX = (X_1, \ldots, X_m) \in (\mathcal{M}, \tau)^m_{\sa}$ given $\cB$, with respect to the ultrafilter $\cU$, as
		\[
		\chi^{\cU}(\bX \mid \cB) := \sup_{\iota: \cB \to \mathbb{M}_\cU} \chi^{\cU}(\bX \mid \cB, \iota).
		\]
	\end{definition}
	
	Shlyakhtenko \cite{Shlyakhtenko2002} defined a version of conditional microstates entropy, which we will denote by ${\overline{\chi}(\bX \mid \bY)}$, where the microstate $\bY^{(n)}$ is not fixed beforehand but rather chosen to maximize the volume of the microstate sets $\Gamma_{\mathbf{R}}^{(n)}(\bX \mid \bY^{(n)} \rightsquigarrow \bY)$.
	
	\begin{definition}[Shlyakhtenko \cite{Shlyakhtenko2002}]
		Let $(\cM,\tau)$ be a tracial von Neumann algebra and $\bX = (X_1,\dots,X_m)$ and $\bY = (Y_1,Y_2,\dots)$ be self-adjoint elements of $\cM$ with operator norm $\leq R$.  For a neighborhood $\cO$ of law$(\bX,\bY)$, let $\pi_2(\cO)$ be the set of laws $\lambda|_{\C\ip{y}}$ where $\lambda: \C\ip{\mathbf{x}, \, \mathbf{y}} \to \C$ is a law in $\cO$.  Define
		\[
		\overline{\chi}_\mathbf{R}(\bX \mid \bY) = \inf_{\cO \ni \operatorname{law}(\bX,\bY)} \limsup_{n \to \infty} \frac{1}{n^2} \log \left( \sup_{\bY^{(n)} \in \Gamma_\mathbf{R}^{(n)}(\pi_2(\cO))} \nu_n(\Gamma_\mathbf{R}^{(n)}(\cO \mid \bY^{(n)} \rightsquigarrow \bY)) \right) + m \log n.
		\]
		Let $\overline{\chi}(\mathbf{X} \mid \mathbf{Y})$ be the supremum over $\mathbf{R}$.
	\end{definition}
	
	\begin{remark}
		In fact, Shlyakhtenko considered only a finite tuple $\bY = (Y_1,\dots,Y_{m'})$ and wrote the definition in terms of specific neighborhoods of matrices approximating (up to some tolerance $\varepsilon > 0$) the moments of $(\bX,\bY)$ with order bounded by $\ell \in \N$.
		For a finite tuple $\bY$, this definition agrees with what we wrote above; indeed, the quantity is monotone under inclusions of neighborhoods $\cO$, so it suffices to consider a neighborhood basis for the space of laws $\Sigma_{m+m', \mathbf{R}}$ at the point law$(\bX,\bY)$. 
		In the case where $\bY$ is an infinite tuple, a neighborhood basis could instead be given by neighborhoods that test the moments up to a certain order in each finite subset of the variables.
	\end{remark}
	
	Shlyakhtenko also showed that the entropy $\overline{\chi}_\mathbf{R}(\bX \mid \bY)$ depends only on $\bX$ and $\cB = \mathrm{W}^*(\bY)$, provided that $\mathbf{R} > \norm{\bX}_{\operatorname{op}}$.\footnote{Technically, he proved this under the assumption that $\bY$ is a finite tuple, but this does not materially affect the proof.}  Therefore, one can define $\overline{\chi}(\mathbf{X} \mid \cB)$ as the $\overline{\chi}(\mathbf{X} \mid \mathbf{Y})$ for a generating tuple $\mathbf{Y}$ of $\cB$.
	
	Here we will show that $\overline{\chi}_\mathbf{R}(\bX \mid \bY)$ represents the supremum of the entropies $\chi^{\cU}(\bX \mid \cB, \iota)$, i.e. the supremum over embeddings $\iota: \cB \rightarrow \mathbb{M}_\cU$ and over non-principal ultrafilters $\cU$.
	
	\begin{lemma} \label{lem: Shl-sup}
		Let $(\cM,\tau)$ be a tracial von Neumann algebra.  Let $\bX$ be an $m$-tuple and $\bY$ an infinite tuple with $\norm{\bX}_{\op}$ and $\norm{\bY}_{\op} \leq \mathbf{R}$.  Let $\cB = \mathrm{W}^*(\mathbf{Y})$.
		Then
		\[
		\overline{\chi}(\bX \mid \mathbf{Y}) = \overline{\chi}_\mathbf{R}(\bX \mid \bY) = \sup_{\cU} \chi^{\cU}(\bX \mid \cB) = \sup_{\cU} \sup_{\iota: \cB \to \mathbb{M}_\cU} \chi^{\cU}(\bX \mid \cB, \iota),
		\]
		where $\sup_{\cU}$ denotes the supremum over all non-principal ultrafilters on $\N$.
	\end{lemma}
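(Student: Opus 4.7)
The statement is a chain of three equalities, which I address in turn. The first, $\overline{\chi}(\bX \mid \bY) = \overline{\chi}_\mathbf{R}(\bX \mid \bY)$, follows from $\overline{\chi}(\bX \mid \bY) = \sup_\mathbf{R} \overline{\chi}_\mathbf{R}(\bX \mid \bY)$ together with Shlyakhtenko's observation (quoted just before the lemma) that $\overline{\chi}_\mathbf{R}$ is independent of $\mathbf{R}$ once $\mathbf{R} > \norm{\bX}_{\op}$. The third, $\sup_\cU \chi^{\cU}(\bX \mid \cB) = \sup_\cU \sup_\iota \chi^{\cU}(\bX \mid \cB, \iota)$, is immediate from Definition \ref{def: conditional entropy ultrafilter}. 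The content thus lies in the middle equality $\overline{\chi}_\mathbf{R}(\bX \mid \bY) = \sup_{\cU, \iota} \chi^{\cU}(\bX \mid \cB, \iota)$, which I would split into two inequalities.

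For the inequality $\geq$, I would fix an arbitrary non-principal ultrafilter $\cU$ and any embedding $\iota = \iota_{\bY^{(n)} \rightsquigarrow \bY}$. For a neighborhood $\cO$ of $\operatorname{law}(\bX, \bY)$ and an index $n$: if $\Gamma_{\mathbf{R}}^{(n)}(\cO \mid \bY^{(n)} \rightsquigarrow \bY)$ is nonempty, then any witness $\bX^{(n)}$ in this set certifies that $\bY^{(n)} \in \Gamma_{\mathbf{R}}^{(n)}(\pi_2(\cO))$, whence the volume is bounded above by $V_n(\cO) := \sup_{\bY^{(n)'}} \nu_n(\Gamma_{\mathbf{R}}^{(n)}(\cO \mid \bY^{(n)'} \rightsquigarrow \bY))$; if the set is empty, the contribution is $-\infty$ and the bound is automatic. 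Passing to $\lim_{n \to \cU}$ and then $\inf_\cO$ yields $\chi^{\cU}(\bX \mid \cB, \iota) \leq \overline{\chi}_\mathbf{R}(\bX \mid \bY)$.

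The reverse inequality is where the work happens, and it requires a diagonal construction. Fix $c < \overline{\chi}_\mathbf{R}(\bX \mid \bY)$ and a decreasing countable neighborhood basis $\cO_1 \supseteq \cO_2 \supseteq \cdots$ of $\operatorname{law}(\bX, \bY)$, available because the relevant space of laws bounded by $\mathbf{R}$ is weak-$*$ metrizable. For each $k$ the $\limsup$ defining $\overline{\chi}_\mathbf{R}(\bX \mid \bY)$ at $\cO_k$ exceeds $c$, so I pick $n_1 < n_2 < \cdots$ with $n_k \geq k$, $\frac{1}{n_k^2} \log V_{n_k}(\cO_k) + m \log n_k > c$, and $\bY^{(n_k)} \in \Gamma_{\mathbf{R}}^{(n_k)}(\pi_2(\cO_k))$ realizing at least $V_{n_k}(\cO_k)/2$. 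Define $\bY^{(n)}$ arbitrarily (with operator norms bounded by $\mathbf{R}$) at the remaining indices. Applying Fact \ref{fact: IIP ultrafilters exist} to the decreasing family $\{n_k : k \geq K\}_K$ (whose intersection is empty) yields a non-principal ultrafilter $\cU$ containing each tail. Along $\cU$, the nesting $\bY^{(n_k)} \in \Gamma_{\mathbf{R}}^{(n_k)}(\pi_2(\cO_k))$ combined with the basis property of $\{\cO_k\}$ forces $\bY^{(n)} \to \bY$ in non-commutative law, so the induced embedding $\iota$ is well-defined by Fact \ref{fact/def: matrix ultraproduct}. Finally, for any neighborhood $\cO$ pick $K$ with $\cO_K \subseteq \cO$; for $j \geq K$ the inclusion $\Gamma_{\mathbf{R}}^{(n_j)}(\cO_j \mid \bY^{(n_j)} \rightsquigarrow \bY) \subseteq \Gamma_{\mathbf{R}}^{(n_j)}(\cO \mid \bY^{(n_j)} \rightsquigarrow \bY)$ bounds the latter volume below by $V_{n_j}(\cO_j)/2$, so the normalized log exceeds $c - \log 2 / n_j^2$; taking the ultralimit along $\cU$ gives $\geq c$, and then $\inf_\cO$ yields $\chi^{\cU}(\bX \mid \cB, \iota) \geq c$, as desired.

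I expect the main obstacle to be precisely this diagonal-and-ultrafilter step: one must thread a single sequence $\bY^{(n)}$ that, along a single ultrafilter $\cU$, simultaneously realizes near-optimal microstate volumes for every neighborhood $\cO_k$ and defines a valid embedding of $\cB$ into $\mathbb{M}_\cU$. The essential tools are the countability of a neighborhood basis of $\operatorname{law}(\bX, \bY)$ and Fact \ref{fact: IIP ultrafilters exist}, which together convert the mismatch between the outer $\limsup$ in $\overline{\chi}_\mathbf{R}$ and the ultralimit in $\chi^{\cU}$ into a concrete choice of non-principal ultrafilter.
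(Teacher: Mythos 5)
Your proposal is correct and takes essentially the same approach as the paper: the easy inequality by observing that any point of $\Gamma_{\mathbf{R}}^{(n)}(\cO \mid \bY^{(n)} \rightsquigarrow \bY)$ certifies $\operatorname{law}(\bY^{(n)}) \in \pi_2(\cO)$, and the hard inequality by a diagonal choice of near-optimal microstates for $\bY$ along a shrinking countable neighborhood basis of $\operatorname{law}(\bX,\bY)$, combined with Fact \ref{fact: IIP ultrafilters exist} to produce a non-principal ultrafilter along which all these bounds hold simultaneously and the chosen microstates induce an embedding $\iota$. The only cosmetic differences are that the paper uses nested index sets $A_k$ with error $1/k$ instead of your subsequence with fixed threshold $c$ and factor-$2$ witnesses, and it obtains the neighborhood-basis property from compactness of $\Sigma_{\omega,\mathbf{R}}$ rather than metrizability.
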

	
	\begin{proof}
		($\geq$)  Fix $\cU$ and $\iota$. Let $\cO$ be a neighborhood of law$(\bX,\bY)$.  Let $\bY^{(n)}$ be a sequence of microstates for $\bY$ as in Definition \ref{def: conditional entropy embedding}.  Then
		\[
		\nu_n(\Gamma_\mathbf{R}^{(n)}(\cO \mid \bY^{(n)} \rightsquigarrow \bY)) \leq
		\sup_{\bZ^{(n)} \in \Gamma_\mathbf{R}^{(n)}(\pi_2(\cO))} \nu_n(\Gamma_\mathbf{R}^{(n)}(\cO \mid \bZ^{(n)} \rightsquigarrow \bY)).
		\]
		(In the case where the left-hand side is empty the inequality holds trivially.)  Therefore,
		\begin{multline*}
			\lim_{n \to \cU} \left[ \frac{1}{n^2} \log \nu_n(\Gamma_\mathbf{R}^{(n)}(\cO \mid \bY^{(n)} \rightsquigarrow \bY)) + m \log n \right] \\ \leq
			\limsup_{n \to \infty} \frac{1}{n^2} \log \left[ \sup_{\bZ^{(n)} \in \Gamma_\mathbf{R}^{(n)}(\pi_2(\cO))} \nu_n(\Gamma_\mathbf{R}^{(n)}(\cO \mid \bZ^{(n)} \rightsquigarrow \bY)) + m \log n \right].
		\end{multline*}
		Taking the infimum over $\cO$ and then the supremum over $\iota$ and $\cU$ completes the argument.
		
		($\leq$) Let $\cO_k$ be a nested decreasing sequence of neighborhoods of law$(\bX,\bY)$ in $\Sigma_{\omega, \mathbf{R}}$ such that ${\cO_0 = \Sigma_{\omega, \mathbf{R}}}$ and $\bigcap_{k \in \N} \cO_k = \{\text{law}(\bX,\bY)\}$.  Define $A_0 = \N$ and for $k \geq 1$,
		\[
		A_k = \left\{n \geq k: \frac{1}{n^2} \log \sup_{\bZ^{(n)} \in \Gamma_\mathbf{R}^{(n)}(\pi_2(\cO_k))} \nu_n(\Gamma_\mathbf{R}^{(n)}(\cO_k\mid \bZ^{(n)} \rightsquigarrow \bY)) + m \log n > \overline{\chi}_\mathbf{R}(\bX \mid \bY) - \frac{1}{k} \right\}
		\]
		Observe that because $\cO_{k+1} \subseteq \cO_k$, we have $A_{k+1} \subseteq A_k$.  It is a consequence of the definition of $\overline{\chi}_\mathbf{R}(\bX \mid \bY)$ that $A_k$ is nonempty.  However, $\bigcap_{k \in \N} A_k = \varnothing$.  These properties imply that there exists a non-principal ultrafilter $\cU$ on $\N$ such that $A_k \in \cU$ for all $k$ (see Fact \ref{fact: IIP ultrafilters exist}).
		
		From the definition of $A_k$, for each $n \in A_k \setminus A_{k+1}$, there exists $\bY^{(n)} \in \Gamma_\mathbf{R}^{(n)}(\pi_2(\cO_k))$ such that
		\[
		\frac{1}{n^2} \log \nu_n(\Gamma_\mathbf{R}^{(n)}(\cO_k\mid \bY^{(n)} \rightsquigarrow \bY)) + m \log n > \overline{\chi}_\mathbf{R}(\bX \mid \bY) - \frac{1}{k}.
		\]
		
		Note that law$(\bY^{(n)})$ converges to law$(\bY)$ as $n \to \cU$ because for $n \in A_k$ (which is a neighborhood of $\cU$), we have law$(\bY^{(n)}) \in \pi_2(\cO_k)$; and $\bigcap_{k \in \N} \pi_2(\cO_k) = \{\text{law}(\bY)\}$.  Thus, $\bY^{(n)}$ induces an embedding $\iota: \cB \to \mathbb{M}_\cU$.  For $n \in A_k$, we have $n \in A_{k'} \setminus A_{k'+1}$ for some $k' \geq k$ and hence
		\begin{align*}
			\frac{1}{n^2} \log \nu_n(\Gamma_\mathbf{R}^{(n)}(\cO_k\mid \bY^{(n)} \rightsquigarrow \bY)) + m \log n &\geq \frac{1}{n^2} \log \nu_n(\Gamma_\mathbf{R}^{(n)}(\cO_{k'}\mid \bY^{(n)} \rightsquigarrow \bY)) + m \log n \\
			&> \overline{\chi}_\mathbf{R}(\bX \mid \bY) - \frac{1}{k'} \\
			&\geq \overline{\chi}_\mathbf{R}(\bX \mid \bY) - \frac{1}{k}.
		\end{align*}
		
		It follows that for each $k \in \N$,
		\[
		\lim_{n \to \cU} \frac{1}{n^2} \log \nu_n(\Gamma_\mathbf{R}^{(n)}(\cO_k\mid \bY^{(n)} \rightsquigarrow \bY)) + m \log n \geq \overline{\chi}_\mathbf{R}(\bX \mid \bY) - \frac{1}{k}.
		\]
		Because $\Sigma_{\omega, \mathbf{R}}$ is compact, the $\cO_k$'s must form a neighborhood basis for $\Sigma_{\omega, \mathbf{R}}$ at law$(\bX,\bY)$ and hence
		\begin{multline*}
			\inf_{\cO \ni \text{law}(\bX,\bY)} \frac{1}{n^2} \log \nu_n(\Gamma_\mathbf{R}^{(n)}(\cO \mid \bY^{(n)} \rightsquigarrow \bY)) + m \log n \\
			= \inf_{k \in \N} \lim_{n \to \cU} \frac{1}{n^2} \log \nu_n(\Gamma_\mathbf{R}^{(n)}(\cO_k\mid \bY^{(n)} \rightsquigarrow \bY)) + m \log n \geq \overline{\chi}_\mathbf{R}(\bX \mid \bY).
		\end{multline*}
		Therefore,
		\[
		\overline{\chi}_\mathbf{R} (\bX \mid \bY) \leq \chi^{\cU}(\bX \mid \cB, \iota),
		\]
		which completes the proof.
	\end{proof}
	
	\subsection{Conditional free entropy and conditional classical entropy} \label{subsec: conditional microstates}
	
	In this section, we relate conditional free entropy with classical entropy by an analogue of \cite[Proposition B.7]{ST2022}, which expresses the free entropy as the supremum of limits of classical entropy of certain random matrix models.  The idea of free entropy as a large-$n$ limit of classical entropy goes back to the work of Voiculescu \cite{VoiculescuFE2}, and the idea to take the supremum of classical entropies of certain measures was applied by Biane and Dabrowski in their concavification of free entropy \cite[see Remark 4.5]{BD2013}. 
	The point of \cite[Proposition B.7]{ST2022} and the results here is to give a description that does not explicitly reference the microstate spaces, based on a diagonalization argument.  We will first give a random matrix characterization of $\chi^{\cU}(\bX \mid \cB, \iota)$ and then a random matrix characterization of $\chi^{\cU}(\bX \mid \cB)$.
	
	\begin{theorem}[Random matrix characterization of $\chi^\mathcal{U}(\bX \mid  \bY)$] \label{thm: rand mx interpretation of chi^U(X | Y)}
		
		Let $\bX$ be an $m$-tuple of elements from $(\cM, \tau)_{\sa}$, and let $\bY$ be a self-adjoint tuple of generators for a $*$-subalgebra $\mathcal{B} \subseteq \cM$. Fix a sequence $\bY^{(n)} \in M_n(\C)_{\sa}^\N$ such that $\sup_n \norm{Y_j^{(n)}} < \infty$ and $\mathbf{Y}^{(n)}$ converges in non-commutative law to $\mathbf{Y}$. \\
		Then $\chi^\cU (\bX \mid  \bY^{(n)} \rightsquigarrow \bY)$ is the supremum of
		\[
		\lim_{n \rightarrow \cU} \frac{1}{n^2} h(\bX^{(n)}) + m \log n = \lim_{n \rightarrow \cU} h^{(n)}(\bX^{(n)}),
		\]
		over all sequences of random variables $(\bX^{(n)})_{n \in \N}$ with $\bX^{(n)} \in M_{n}(\C)^m_{\sa}$ satisfying:
		\begin{enumerate}
			\item[(1)] the law of $(\bX^{(n)}, \bY^{(n)})$ converges to the law of $(\bX, \bY)$ in probability as $n \rightarrow \cU$.
			\item[(2)] For some $\bR \in (0, \infty)^m$, we have $\lim_{n \rightarrow \cU} \norm{\bX^{(n)}}_{\op} \leq \bR$ in probability.
			\item[(3)] There are some constants $C > 0$ and $K > 0$ such that for each $n \in \N$,
			\[
			\mathbb{P}(\norm{\bX^{(n)}}_2 \geq C + \delta)  \leq e^{-K n^2 \delta^2} \text{ for all } \delta > 0.
			\]
		\end{enumerate}
	\end{theorem}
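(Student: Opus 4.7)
The plan is to prove both inequalities separately, modeling the argument after \cite[Proposition B.7]{ST2022} but carrying along the fixed microstate sequence $\bY^{(n)}$ throughout.

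For the inequality $\chi^{\cU}(\bX \mid \bY^{(n)} \rightsquigarrow \bY) \geq \lim_{n \to \cU} h^{(n)}(\bX^{(n)})$, I would fix a sequence $(\bX^{(n)})$ satisfying (1)--(3), a neighborhood $\cO$ of $\mu = \operatorname{law}(\bX,\bY)$, and $\mathbf{R}$ with $R_j > \norm{X_j}_{\op}$. I would then partition $M_n(\C)_{\sa}^m$ into $\Gamma := \Gamma_{\mathbf{R}}^{(n)}(\cO \mid \bY^{(n)} \rightsquigarrow \bY)$ together with annular shells $A_j = \{\norm{\cdot}_2 \in [j-1,j)\} \setminus \Gamma$ for $j \in \N$, and apply Fact \ref{fact: entropy controlled by ptn}. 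Conditions (1) and (2) imply $\Prob(\bX^{(n)} \in \Gamma) \to 1$ along $\cU$, so the principal term $\mu(\Gamma)\log \nu_n(\Gamma)$ produces (after normalization by $1/n^2$ and adding $m\log n$) exactly the free-entropy expression in the limit. The shell contributions $\mu(A_j)\log\nu_n(A_j) - \mu(A_j)\log\mu(A_j)$ must be shown to vanish after normalization: a volume estimate gives $\log \nu_n(A_j) \leq mn^2 \log j - (mn^2/2)\log(n^2/2) + O(n^2)$, while (3) gives $\mu(A_j) \leq e^{-Kn^2(j-C-1)^2}$ for $j > C+1$, so the products are summably small uniformly in $n$, and the Shannon term $-\sum \mu(A_j)\log\mu(A_j)$ is bounded. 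Taking the infimum over $\cO$ and then the supremum over $\mathbf{R}$, and invoking Lemma \ref{lemma: Chi_R = Chi}, yields the desired bound.

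For the reverse inequality, I would construct realizing sequences by taking $\bX^{(n)}_{\cO,\mathbf{R}}$ to be the uniform probability measure on $\Gamma_{\mathbf{R}}^{(n)}(\cO \mid \bY^{(n)} \rightsquigarrow \bY)$ whenever that set has positive Lebesgue measure. Such a random variable has classical entropy exactly $\log \nu_n(\Gamma_{\mathbf{R}}^{(n)}(\cO \mid \bY^{(n)} \rightsquigarrow \bY))$, and it automatically satisfies conditions (1) (every realization lies in the neighborhood $\cO$), (2) (operator norm is bounded by $\mathbf{R}$ by construction), and (3) (the $2$-norm is deterministically bounded by $\sqrt{m}\,\max_j R_j$, so any Gaussian tail is vacuous for $\delta$ large enough, and trivially satisfied by taking $C$ large). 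I would then fix $\mathbf{R} > \norm{\bX}_{\op}$, a decreasing neighborhood basis $\cO_k$ of $\mu$ with $\bigcap_k \cO_k = \{\mu\}$, and perform a standard diagonalization: on a set $A_k \in \cU$ of indices (chosen nested via a Fact~\ref{fact: IIP ultrafilters exist}-style argument) the uniform distribution on $\Gamma_{\mathbf{R}}^{(n)}(\cO_k \mid \bY^{(n)} \rightsquigarrow \bY)$ produces normalized entropies within $1/k$ of $\chi^{\cU}_{\mathbf{R}}$, yielding a single sequence whose ultralimit equals $\chi^{\cU}_{\mathbf{R}}$.

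The main obstacle is the estimate in the first direction controlling the contribution of the complement of the microstate set. The annular decomposition and its interaction with the hypotheses (2) and (3) is the delicate point: I need condition (3)'s Gaussian tail to beat the dimension-$mn^2$ volume growth of balls in $M_n(\C)_{\sa}^m$, which requires the normalization $n^2$ in both the exponent of the tail bound and the Lebesgue volume to line up correctly. Once this technical estimate is in hand, the rest of the proof is essentially a combination of the partition inequality, convergence in probability of the laws, and a standard diagonalization over the neighborhood basis.
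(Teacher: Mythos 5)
Your overall strategy coincides with the paper's: the upper bound is obtained from Fact \ref{fact: entropy controlled by ptn} applied to a partition consisting of the conditional microstate set together with $2$-norm shells, with the Gaussian tail in hypothesis (3) beating the $mn^2$-dimensional volume growth of balls, and the lower bound is obtained by placing the uniform distribution on $\Gamma_{\mathbf{R}}^{(n)}(\cO_k \mid \bY^{(n)} \rightsquigarrow \bY)$ and diagonalizing over a nested neighborhood basis along sets $A_k \in \cU$. Both halves, including the Stirling volume estimate and the verification of (1)--(3) for the uniform models, match the paper's proof.

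There is, however, one step in your first direction that fails as written. You fix the cutoff $\mathbf{R}$ with $R_j > \norm{X_j}_{\op}$ and then assert that (1) and (2) imply $\Prob\bigl(\bX^{(n)} \in \Gamma_{\mathbf{R}}^{(n)}(\cO \mid \bY^{(n)} \rightsquigarrow \bY)\bigr) \to 1$ along $\cU$. That implication is false for this choice of $\mathbf{R}$: convergence of the law in probability does not control operator norms (a single outlier eigenvalue contributes $O(1/n)$ to every moment but keeps $\norm{X_j^{(n)}}_{\op}$ large), and hypothesis (2) only bounds the norms in probability by \emph{some} tuple $\bR$, which may be much larger than $\norm{\bX}_{\op}$. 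With your cutoff, the event $\{\norm{X_j^{(n)}}_{\op} \le R_j\}$ can have probability $0$ for every $n$, and the principal term of the partition estimate collapses; taking a supremum over your (too small) $\mathbf{R}$'s afterwards, or invoking Lemma \ref{lemma: Chi_R = Chi}, does not repair the failed probability estimate. The fix is the one the paper makes: run the argument with a cutoff $\mathbf{R}'$ satisfying $R_j' > \max(R_j, 1)$, where $\bR$ is the tuple from hypothesis (2); then $\Prob\bigl(\bX^{(n)} \in \Gamma_{\mathbf{R}'}^{(n)}(\cO \mid \bY^{(n)} \rightsquigarrow \bY)\bigr) \to 1$ does hold, and the conclusion $\lim_{n \to \cU} h^{(n)}(\bX^{(n)}) \le \chi_{\mathbf{R}'}^{\cU}(\bX \mid \bY^{(n)} \rightsquigarrow \bY) \le \chi^{\cU}(\bX \mid \bY^{(n)} \rightsquigarrow \bY)$ follows directly from the definition of $\chi^{\cU}$ as a supremum over cutoffs, so Lemma \ref{lemma: Chi_R = Chi} is not needed in this direction at all. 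It is needed, and correctly used by you, only in the converse direction to identify $\chi_{\mathbf{R}}^{\cU} = \chi^{\cU}$ for your fixed $\mathbf{R} > \norm{\bX}_{\op}$; there you should also record the reduction to $\chi^{\cU}(\bX \mid \bY^{(n)} \rightsquigarrow \bY) > -\infty$, which is what guarantees (through the definition of the sets $A_k$) that the microstate sets carrying the uniform measures have positive volume.
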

	
	\begin{remark}
		``Convergence in probability as $n \to \cU$'' is understood as follows.  Let $Z_n$ be a sequence of random variables indexed by $n \in \N$.  Then $\lim_{n \to \cU} Z_n \leq c$ in probability means that for each $\epsilon > 0$, $\lim_{n \to \cU} P(Z_n > c + \epsilon) = 0$.
	\end{remark}
	
	\begin{remark}
		The hypothesis (3) can be replaced by some weaker tail bounds if desired.  The motivation for the specific form of the tail bound used here is that it arises naturally from concentration estimates for certain random matrix models (see e.g.\ \cite[\S 2.3 and \S 4.4]{AGZ2009}).  For instance, it is easy to check that the Gaussian random matrices $S_j^{(n)}$ satisfy such an estimate.   This is how the theorem was used in \cite[Appendix B]{ST2022}.  However, for the purposes of our main result in this paper, the specific form of tail bound is irrelevant because we only need to apply the theorem to matrices $\bX^{(n)}$ that are uniformly bounded in operator norm, hence also uniformly bounded in $\norm{\cdot}_2$, which trivially satisfy (3); see Remark \ref{rem: uniformly bounded random matrix models} and proof of Theorem \ref{thm: cond main}.
	\end{remark}

	\begin{proof}[Proof of Theorem \ref{thm: rand mx interpretation of chi^U(X | Y)}]
		Based on Lemma \ref{lemma: cond microstates f.e. embedding}, we may assume without loss of generality that $\norm{Y_j}_{\op} = 1$ and $\norm{Y_j^{(n)}}_{\op} = 1$ for all $j \in \N$. 
		Suppose that $(\bX^{(n)})_{n \in \N}$ is any sequence satisfying the conditions (1)-(3). As the $\bY^{(n)}$ are fixed, the distribution of $(\bX^{(n)}, \bY^{(n)})$ depends only on $\bX^{(n)}$. Then one can use the argument of \cite[Proposition B.7]{ST2022}  (replacing the usual microstate spaces with the conditional microstate spaces and the limit with the ultralimit), to show that $\lim_{n \rightarrow \cU} h^{(n)}(\bX^{(n)}) \leq \chi^\cU(\bX \mid  \bY^{(n)} \rightsquigarrow \bY)$. 
		We repeat the argument here for the reader's convenience. 
		
		First fix $\mathbf{R}'$ so that for all $j = 1, \ldots, m$, $R'_j > \max(R_j, 1)$. Let $\mathcal{O}$ be a neighborhood of $\text{law}(\bX, \bY)$ in $\Sigma_{\omega, \bR'}$.
		For each $n \in \N$, we define the following partition of $M_{n}(\C)^m_{\sa}$:
		\begin{align*}
			S_0^{(n)} &:= \Gamma_{\bR'}^{(n)}( \cO \mid  \bY^{(n)} \rightsquigarrow \bY) \\
			S_1^{(n)} &:= B(0, C+1) \setminus S_0^{(n)} \\
			S_j^{(n)} &:= B(0, C + j) \setminus B(0, C + j - 1) \text{ for all $j \geq 2$},
		\end{align*}
		where $B(0, K)$ denotes the 2-norm ball of radius $K$ centered at $0$.
		To simplify notation, for each $n \in \N$ we write $\nu_{n}$ for the Lebesgue measure on ${M_{n}(\C)^m_{\sa}}$.
		Then, applying Fact \ref{fact: entropy controlled by ptn} to the density of ${(\bX^{(n)}, \, \bY^{(n)})}$, we have 
		\begin{align*}
			h_{M_{n}(\C)^m_{\sa}} (\bX^{(n)}, \,  \bY^{(n)}) 
			&\leq  \sum_{j=0}^\infty \mu^{(n)}(S_j^{(n)}) \log \nu_n (S_j^{(n)}) - \mu^{(n)} (S_j^{(n)}) \log \mu^{(n)} (S_j^{(n)}),
		\end{align*}
		where we write $\mu^{(n)}$ to denote the probability measure associated to $(\bX^{(n)}, \, \bY^{(n)})$. (Note that as $\bY^{(n)}$ is fixed, this depends only on $\bX^{(n)}$).
		Subsequently,
		\begin{equation} \label{eqn: ptn-bound}
			h^{(n)} (\bX^{(n)}, \,  \bY^{(n)}) \leq 
			\sum_{j=0}^\infty  H_j^{(n)} (\cO \mid  \bY^{(n)} \rightsquigarrow \bY), 
		\end{equation}
		where
		\[
		H_j^{(n)}(\cO \mid  \bY^{(n)} \rightsquigarrow \bY) : = \mu^{(n)}(S_j^{(n)}) \left( \frac{1}{n^2} \log \nu_n (S_j^{(n)}) + m \log n \right) - \mu^{(n)}(S_j^{(n)}) \cdot \frac{1}{n^2} \log \mu^{(n)}(S_j^{(n)}).
		\]
		Note that this quantity depends on the neighborhood $\cO$ only for the indices $j \in \{0, 1\}$. 
		
		We now bound each term of the sum in \ref{eqn: ptn-bound}. When $j = 0$, we have
		\begin{align*}
			H_0^{(n)}(\cO \mid  \bY^{(n)} \rightsquigarrow \bY) &= \mu^{(n)}( \Gamma_{\bR'}^{(n)}( \cO \mid  \bY^{(n)} \rightsquigarrow \bY) )
			\left( \frac{1}{n^2} \log \nu_n (\Gamma_{\bR'}^{(n)}( \cO \mid  \bY^{(n)} \rightsquigarrow \bY)) + m \log n \right) \\
			&\quad - \mu^{(n)}(\Gamma_{\bR'}^{(n)}( \cO \mid  \bY^{(n)} \rightsquigarrow \bY)) \left( \frac{1}{n^2} \log \mu^{(n)}(\Gamma_{\bR'}^{(n)}( \cO \mid  \bY^{(n)} \rightsquigarrow \bY)) \right).
		\end{align*}
		Since $(\bX^{(n)}, \bY^{(n)})$ converges in non-commutative law to $(\bX, \bY)$ and $-t \log t \leq e^{-1}$ for any $t > 0$, we have that the second term goes to $0$ when we take the ultralimit in $n$.  For the first term, note that by property (1),
		\[
		\lim_{n \rightarrow \cU} \mu^{(n)} ( \Gamma^{(n)}_{\bR'}(\cO \mid  \bY^{(n)} \rightsquigarrow \bY) ) = 1.
		\]
		Then, by the definition of $\chi^\cU(\bX \mid  \bY^{(n)} \rightsquigarrow \bY)$: for any $\epsilon > 0$ there is some neighborhood $\cO_\epsilon$ such that 
		\begin{equation} \label{eqn: bound for H_0}
			\lim_{n \rightarrow \cU}  H_0^{(n)} (\cO_\epsilon \mid  \bY^{(n)} \rightsquigarrow \bY)   \leq \chi^\cU (\bX \mid  \bY^{(n)} \rightsquigarrow \bY) + \epsilon.
		\end{equation}
		For bounding the other terms, we first note that upon identifying $M_{n}(\C)^m_{\sa}$ with $\C^{m n^2}$ and applying Stirling's formula, we have
		\begin{equation} \label{eqn: stirling estimate}
			\frac{1}{n^2} \log \nu_n ( B(0, r)) = - m \log n + m \log r + O(m) \text{ for all $r > 0$}.
		\end{equation}
		Then for the term with $j = 1$, note that $S_1^{(n)} \subseteq B(0, C+1)$, so we have
		\begin{align*}
			H_1^{(n)}(\cO \mid  \bY^{(n)} \rightsquigarrow \bY) &\leq  \mu^{(n)}(S_1^{(n)}) \left( - m \log n + m \log (C + 1) + O(m) + m \log n \right) -  \frac{1}{n^2} \mu^{(n)}(S_1^{(n)}) \log  \mu^{(n)}(S_1^{(n)}) \\
			&= \mu^{(n)}(S_1^{(n)}) \left(m \log (C + 1) + O(m) \right) -  \frac{1}{n^2} \mu^{(n)}(S_1^{(n)}) \log  \mu^{(n)}(S_1^{(n)}).
		\end{align*}
		Again since $-t \log t \leq e^{-1}$ for $t >0$, the second term goes to zero in the ultralimit.
		Then again by property (1), we have $\mu^{(n)}(S_1^{(n)}) \rightarrow 0$ as $n \rightarrow \cU$, so that:
		\[
		\lim_{n \rightarrow \cU}  H_1^{(n)} (\cO \mid  \bY^{(n)} \rightarrow \bY) \leq \lim_{n \rightarrow \cU} \mu^{(n)}(S_1^{(n)}) (m \log(C + 1) + O(m)) = 0.
		\]
		Finally, for $j \geq 2$, we apply equation (\ref{eqn: stirling estimate}) along with the exponential tail bound in property (3) and the fact that $-t \log t$ is increasing for $t \leq e^{-1}$ to obtain
		\begin{align*}
			\sum_{j=2}^\infty H_j^{(n)} (\cO \mid  \bY^{(n)} \rightsquigarrow \bY)  &\leq \sum_{j=2}^\infty  \mu^{(n)}(S_j^{(n)}) (m \log (C+j) + O(m)) + \frac{1}{n^2} \int - e^{-1} \log(e^{-1})  \\
			&\leq  \sum_{j=2}^\infty e^{-K n^2 (j-1)^2} (m \log (C+j) + O(m)) + \frac{1}{e n^2} \rightarrow 0 \text{ as $n \rightarrow \cU$}.
		\end{align*}
		Finally, combining these upper bounds and sending $\epsilon \rightarrow 0$ in equation (\ref{eqn: bound for H_0}), we obtain that for any sequence $(\bX^{(n)})_{n \in \N}$ satisfying the conditions (1)-(3),
		\[
		\lim_{n \rightarrow \cU}   h^{(n)}(\bX^{(n)}, \,  \bY^{(n)})  \leq \lim_{n \rightarrow \cU} \sum_{j=0}^\infty    H_j^{(n)} (\cO \mid  \bY^{(n)} \rightsquigarrow \bY) 
		\leq \chi^\cU (\bX \mid  \bY^{(n)} \rightsquigarrow \bY).
		\]

		For the other direction, we construct a sequence $(\bX^{(n)})_{n \in \N}$ so that
		\[
		\chi^\cU (\bX \mid  \bY^{(n)} \rightsquigarrow \bY) \leq \lim_{n \rightarrow \cU} h^{(n)}(\bX^{(n)}).
		\]
		Without loss of generality, assume $\chi^\cU( \bX \mid  \bY^{(n)} \rightsquigarrow \bY) > - \infty$. Fix $\bR > \norm{\bX}_{\infty}$, and $\cO$ a neighborhood of law$(\bX, \bY)$.
		Let $(\cO_k)_{k \in \N}$ be a sequence of nested neighborhoods of law$(\bX, \bY)$ in $\Sigma_{\omega, \bR}$, shrinking to law$(\bX, \bY)$ as $k \rightarrow \infty$.
		
		Let $A_0 = \N$ and for $k \geq 1$ let
		\[
		A_k = \biggl\{n \geq k: \frac{1}{n^2} \log \nu_n(\Gamma_\bR^{(n)}(\cO_k \mid \mathbf{Y}^{(n)} \rightsquigarrow \mathbf{Y})) + m \log n
		> \chi_\bR^\cU (\bX \mid  \bY^{(n)} \rightsquigarrow \bY) - \frac{1}{k} \biggr\}.
		\]
		Note that $A_k \in \cU$ because
		\[
		\lim_{n \to \cU} \frac{1}{n^2} \log \nu_n(\Gamma_\bR^{(n)}(\cO_k \mid \mathbf{Y}^{(n)} \rightsquigarrow \mathbf{Y})) + m \log n \geq \chi_\bR^\cU (\bX \mid  \bY^{(n)} \rightsquigarrow \bY).
		\]
		Moreover, since the $\cO_k$'s are nested, we have $A_{k+1} \subseteq A_k$; also, $\bigcap_{k \in \N} A_k = \varnothing$.  For each $k$, for $n \in A_k \setminus A_{k+1}$, let $\mu^{(n)}$ be the uniform measure on $\Gamma_\bR^{(n)} (\cO_k \mid  \bY^{(n)} \rightsquigarrow \bY)$, and let $\bX^{(n)}$ be a random matrix tuple in $M_n(\C)^m_{\sa}$ with distribution $\mu^{(n)}$.  Thus, for $n \in A_k \setminus A_{k+1},$
		\[
		h^{(n)}(\bX^{(n)}) = \frac{1}{n^2} \log \nu_n(\Gamma_\bR^{(n)}(\cO_k \mid \mathbf{Y}^{(n)} \rightsquigarrow \mathbf{Y})) + m \log n.
		\]
		Hence, using the definition and nestedness of the $A_k$'s,
		\[
		h^{(n)}(\bX^{(n)}) \geq \chi_\bR^\cU (\bX \mid  \bY^{(n)} \rightsquigarrow \bY) - \frac{1}{k} \text{ for } n \in A_k.
		\]
		Thus, as $A_k \in \cU$, we have
		\[
		\lim_{n \to \cU} h^{(n)}(\bX^{(n)}) \geq \chi_\bR^{\cU}(\bX \mid \bY^{(n)} \rightsquigarrow \bY) - \frac{1}{k}.
		\]
		Since $k$ was arbitrary,
		\[
		\lim_{n \to \cU} h^{(n)}(\bX^{(n)}) \geq \chi_\bR^{\cU}(\bX \mid \bY^{(n)} \rightsquigarrow \bY) = \chi^{\cU}(\bX \mid \bY^{(n)} \rightsquigarrow \bY).
		\]
		
		It remains to check that $\bX^{(n)}$ satisfies (1)-(3).  Note that by construction $\norm{\bX^{(n)}}_{\op} \leq \bR$ so that (2) and (3) hold.  Furthermore, to show that law$(\bX^{(n)},\bY^{(n)})$ converges in probability to law$(\bX,\bY)$, fix a neighborhood $\cO$ of law$(\bX,\bY)$.  Since $\Sigma_{d, \bR}$ is compact, there exists $k$ such that $\cO_k \subseteq \cO$. For all $n \in A_k$, we have law$(\bX^{(n)},\bY^{(n)}) \in \cO_k \subseteq \cO$.
	\end{proof}
	
	\begin{remark}
		The same proof works to show the random matrix interpretation of $\chi(\bX \mid  \bY)$ holds with $\lim_{n \rightarrow \cU}$ replaced by subsequential limits.
		More explicitly, one can establish that given fixed microstates $\bY^{(n)}$ for $\bY$, we have $\chi(\bX \mid  \bY^{(n)} \rightsquigarrow \bY)$ is the supremum of
		\[
		\lim_{\ell \rightarrow \infty} \frac{1}{n_\ell^2} h(\bX^{(\ell)}, \bY^{(n_\ell)}) + m \log n_\ell = \lim_{\ell \rightarrow \infty} h^{(n_\ell)}(\bX^{(\ell)}, \bY^{(n_\ell)}),
		\]
		over sequences $n_\ell \to \infty$ and random variables $\bX^{(\ell)} \in M_{n_\ell}(\C)^m_{\sa}$ satisfying (1)-(3).
	\end{remark}

	\begin{remark} \label{rem: uniformly bounded random matrix models}
		In the last theorem, note for each $n$ we chose the matrix tuples $\bX^{(n)}$ uniformly at random from $\Gamma_\bR^{(n)} (\cO_k \mid  \bY^{(n)} \rightsquigarrow \bY)$ for $n \in A_k \setminus A_{k+1}$. (Recall $\bigcup_{k \in \N} (A_k \setminus A_{k+1}) = A_0 = \N$.) Thus, we may always choose the collection $\{\bX^{(n)}\}_{n \in \N}$ to be unitarily invariant in distribution and uniformly bounded in operator norm.
	\end{remark}

	\begin{theorem} \label{thm: rand mx conditional entropy 2}
		Let $\bX$ be an $m$-tuple of elements from $(\cM, \tau)_{\sa}$, and let $\bY$ be a self-adjoint tuple of generators for a $*$-subalgebra $\mathcal{B} \subseteq \cM$.  Then $\chi^\cU (\bX \mid  \cB)$ is the supremum of
		\[
		\lim_{n \to \cU} \frac{1}{n^2} h(\bX^{(n)} \mid  \bY^{(n)} ) + m \log n = \lim_{n \rightarrow \cU} h^{(n)}(\bX^{(n)} \mid  \bY^{(n)}),
		\]
		over all sequences of random variables $(\bX^{(n)},\bY^{(n)})_{n \in \N}$ with $\bX^{(n)} \in M_n(\C)^m_{\sa}$ and $\bY^{(n)} \in M_n(\C)_{\sa}^{\N}$ satisfying:
		\begin{enumerate}
			\item[(1)] The law of $(\bX^{(n)}, \bY^{(n)})$ converges to the law of $(\bX, \bY)$ in probability as $n \to \cU$.
			\item[(2)] For some $\bR \in (0, \infty)^m$, we have $\lim_{n \rightarrow \cU} \norm{\bX_j^{(n)}}_{\op} \leq \bR$ in probability.
			\item[(3)] There are some constants $C > 0$ and $K > 0$ such that for each $n \in \N$ and all values of $y$,
			\[
			\mathbb{P}(\norm{\bX^{(n)}}_2 \geq C + \delta  \mid  \bY^{(n)} = y)  \leq e^{-K n^2 \delta^2} \text{ for all } \delta > 0.
			\]
			\item[(4)] For each $j$, there is some $R_j'$ such that $\lim_{n \to \cU} \norm{Y_j^{(n)}}_{\operatorname{op}} \leq R_j'$.
		\end{enumerate}
	\end{theorem}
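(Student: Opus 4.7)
The overall strategy is to reduce this to the already-proved Theorem~\ref{thm: rand mx interpretation of chi^U(X | Y)} (where $\bY^{(n)}$ is deterministic) via a disintegration and selection argument. Since $\chi^\cU(\bX \mid \cB) = \sup_\iota \chi^\cU(\bX \mid \cB,\iota)$ and each $\chi^\cU(\bX \mid \cB,\iota)$ equals $\chi^\cU(\bX \mid \bY^{(n)} \rightsquigarrow \bY)$ for any inducing sequence $\bY^{(n)}$, our job is to show that allowing $\bY^{(n)}$ to be random does not change the supremum.

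The inequality $\chi^\cU(\bX \mid \cB) \leq \sup$ is immediate: fix an embedding $\iota$ induced by deterministic $\bY^{(n)}$ and apply Theorem~\ref{thm: rand mx interpretation of chi^U(X | Y)} to produce a random $\bX^{(n)}$ satisfying its conditions (1)--(3) with $\lim_{n\to\cU} h^{(n)}(\bX^{(n)}) \geq \chi^\cU(\bX \mid \cB,\iota) - \epsilon$. Because $\bY^{(n)}$ is constant, $h^{(n)}(\bX^{(n)} \mid \bY^{(n)}) = h^{(n)}(\bX^{(n)})$, and deterministic $\bY^{(n)}$ trivially satisfies (4). Supremizing over $\iota$ and letting $\epsilon \to 0$ gives the bound.

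The reverse inequality is the substantive step. Given a random model $(\bX^{(n)}, \bY^{(n)})$ satisfying (1)--(4) with $\lim_{n\to\cU} h^{(n)}(\bX^{(n)} \mid \bY^{(n)}) = L$, I would disintegrate
\[
h^{(n)}(\bX^{(n)} \mid \bY^{(n)}) = \int h^{(n)}(\bX^{(n)} \mid \bY^{(n)} = y)\,d\mu_{\bY^{(n)}}(y),
\]
and select a deterministic $y^{(n)}$ so that the conditional distribution of $\bX^{(n)}$ at $\bY^{(n)}=y^{(n)}$, together with the sequence $y^{(n)}$, satisfies all the hypotheses of Theorem~\ref{thm: rand mx interpretation of chi^U(X | Y)} while losing only $o(1)$ entropy. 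Concretely, I want $y^{(n)}$ with (a) $\operatorname{law}(y^{(n)}) \to \operatorname{law}(\bY)$; (b) the law of $(\bX^{(n)} \mid \bY^{(n)}=y^{(n)}, y^{(n)})$ lying in a prescribed neighborhood of $\operatorname{law}(\bX,\bY)$ with high probability; (c) $h^{(n)}(\bX^{(n)} \mid \bY^{(n)} = y^{(n)}) \geq H_n - \delta$ where $H_n = h^{(n)}(\bX^{(n)} \mid \bY^{(n)})$; and (d) the bounds (2) and (3) carrying through to the conditional distribution. Conditions (a), (b), (d) hold on $\mu_{\bY^{(n)}}$-sets of measure approaching $1$ via assumptions (1), (2), (4) and Markov's inequality (using that (3) is already a \emph{pointwise} conditional statement). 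For (c), the pointwise upper bound $h^{(n)}(\bX^{(n)} \mid \bY^{(n)} = y) \leq M$ provided by Fact~\ref{fact: entropy controlled by var} together with assumption (2) gives, by an elementary averaging argument, $\mu_{\bY^{(n)}}\{y : h^{(n)}(\bX^{(n)} \mid \bY^{(n)} = y) \geq H_n - \delta\} \geq \delta/(M - H_n + \delta)$, a positive quantity bounded below uniformly in $n$. Intersecting with the sets from (a), (b), (d) leaves a nonempty slice from which to pick $y^{(n)}$. Diagonalizing over $\delta_k \downarrow 0$ and a decreasing neighborhood basis $\cO_k$ (following the ultrafilter bookkeeping in the second half of the proof of Theorem~\ref{thm: rand mx interpretation of chi^U(X | Y)} and the construction in Fact~\ref{fact: IIP ultrafilters exist}) yields a single deterministic sequence $y^{(n)}$ inducing an embedding $\iota$ with $\chi^\cU(\bX \mid y^{(n)} \rightsquigarrow \bY) \geq L$ by Theorem~\ref{thm: rand mx interpretation of chi^U(X | Y)}.

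The main obstacle is orchestrating the diagonalization so that the single sequence $y^{(n)}$ simultaneously makes (a)--(d) hold with vanishing error as $n \to \cU$, while also inducing an honest embedding $\iota: \cB \to \mathbb{M}_\cU$. In particular, one must be careful that the set where (c) holds (whose measure is bounded below but may not tend to $1$) still intersects the "high-probability" sets for (a), (b), (d); and that the $y^{(n)}$ selected for $n$ in decreasing ultrafilter-large sets remains compatible with the convergence in noncommutative law needed to define $\iota$ via Fact~\ref{fact/def: matrix ultraproduct}.
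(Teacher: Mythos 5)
Your proposal is correct and follows essentially the same route as the paper: the easy direction is identical, and the reverse inequality likewise proceeds by disintegrating over $\bY^{(n)}$, using hypothesis (3) together with Fact \ref{fact: entropy controlled by var} to get an entropy upper bound $M$ uniform in the fiber and in $n$, selecting a deterministic sequence $y^{(n)}$ by the same nested-neighborhood, $A_k \in \cU$ bookkeeping, and then invoking Theorem \ref{thm: rand mx interpretation of chi^U(X | Y)}. The only (immaterial) difference is the selection rule: you choose a fiber where the conditional entropy is within $\delta$ of its average $H_n$, using the averaging bound $\mu\{h \geq H_n - \delta\} \geq \delta/(M - H_n + \delta)$ (which requires the harmless reduction to $\lim_{n\to\cU} H_n > -\infty$ so the bound is uniformly positive, after which intersecting with the measure-$\to 1$ sets for (a), (b), (d) is nonempty for $\cU$-many $n$), whereas the paper picks a near-maximal fiber inside the high-probability set $\Omega_{n,k}$ and bounds the average above by that near-maximum plus $M/k$.
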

	
	\begin{proof}
		First, we show that $\chi^{\cU}(\bX \mid \cB)$ is less than or equal to the supremum.  Recall $\chi^{\cU}(\bX \mid \cB)$ was defined as the supremum of $\chi^{\cU}(\bX \mid \cB, \iota)$ over all embeddings $\iota$.  By the previous proposition, after fixing microstates $\bY^{(n)}$ that induce the embedding $\iota$, this quantity in turn is the supremum over matrix models $\bX^{(n)}$ satisfying (1)-(3) of Theorem \ref{thm: rand mx interpretation of chi^U(X | Y)}. Note that if $\bX^{(n)}$ and $\bY^{(n)}$ satisfy (1)-(3) of the previous theorem, where $\bY^{(n)}$ is deterministic, then $(\bX^{(n)},\bY^{(n)})$ also satisfy (1)-(4) of this theorem.  Moreover, when $\bY^{(n)}$ is deterministic, we have $h^{(n)}(\bX^{(n)} \mid \bY^{(n)}) = h^{(n)}(\bX^{(n)})$.  Thus, $\chi^{\cU}(\bX \mid \cB)$ is less than or equal to the supremum asserted in this theorem.
		
		For the reverse inequality, suppose $(\bX^{(n)},\bY^{(n)})$ satisfy (1)-(4) of this theorem.  By standard facts about conditional distributions and Borel probability spaces (see e.g. \cite[Chapters V.9, V.10]{feller1991introduction} and \cite[\S 8.3]{Klenke2014}), we may assume without loss of generality that all the $\bY^{(n)}$'s are random variables on a fixed probability space $(\Omega, \mathbb{P}_0)$, and that $\bX^{(n)}$ is a random variable on $M_n(\C)_{\sa}^m \times \Omega$ which can be sampled by first sampling $\omega$ and then sampling using the conditional distribution $\mu_\omega^{(n)}$ of $\bX^{(n)}$ given $\bY^{(n)}(\omega)$.  For each $\omega$, we denote by $\bX^{(n)}(\omega)$ the random variable on $M_n(\C)$ chosen according to this conditional distribution.  We use $\mathbb{P}_0$ for the probability measure on the space where $\mathbf{Y}^{(n)}$ is defined, as above, and we write $\mathbb{P}$ for probability on the (implicit) larger probability space where $\mathbf{X}^{(n)}$ and $\mathbf{Y}^{(n)}$ live.
		
		Since $h^{(n)}(\bX^{(n)} \mid \bY^{(n)}) = \int_\Omega h^{(n)}(\bX^{(n)}(\omega))\,d\mathbb{P}_0(\omega)$, the idea of the argument is now to apply Theorem \ref{thm: rand mx interpretation of chi^U(X | Y)} pointwise to each $\omega$.  However, we must be careful because we only assumed convergence in probability, and the limits are with respect to the ultrafilter $\cU$.  Thus, roughly speaking, we want to arrange that the hypotheses of Theorem \ref{thm: rand mx interpretation of chi^U(X | Y)} hold uniformly for $\omega$ in a set of large measure.
		
		Note condition (3) implies an upper bound on the second moment $E \norm{\bX^{(n)}(\omega)}_2^2$ that is uniform in $\omega$; specifically,
		\begin{align*}
			\E \norm{X^{(n)}(\omega)}_2^2 &= \sum_{j=1}^{\infty} \norm{X^{(n)}(\omega)}_2^2 \cdot \mathbf{1}_{\{C+j \leq \norm{X^{(n)}(\omega)}_2^2 < C + j + 1) \, \mid  \, \bY^{(n)}(\omega) \}} + \norm{X^{(n)}(\omega)}_2^2 \cdot \mathbf{1}_{\{ \norm{X^{(n)}(\omega)}_2^2 < C+1 \, \mid \, \bY^{(n)}(\omega) \}} \\
			&\leq \sum_{j=1}^\infty (C + j + 1) e^{-K n^2 j^2} + (C+1) =: C' < \infty,
		\end{align*}
		and this upper bound $C'$ is independent of $\omega$.  Hence, by Fact \ref{fact: entropy controlled by var}, there is an upper bound $M$ on the entropy $h^{(n)}(\bX^{(n)}(\omega))$ that is uniform in $\omega$. 
		
		Let $(\cO_k)$ be a sequence of neighborhoods in $\Sigma_{\omega,\bR,\bR'}$ shrinking to $\operatorname{law}(\bX,\bY)$.  For each $n$ and $\omega$, write
		\[
		\Gamma_{n,k}(\omega) = \{\bX \in M_n(\C)_{\sa}^m: \norm{\bX}_\infty < R_j + 1/k \text{ and } \operatorname{law}(\bX,\bY^{(n)}(\omega)) \in \cO_k \}.
		\]
		Let
		\begin{align*}
			\Omega_{n,k} = \{\omega \in \Omega: & \norm{Y_j(\omega)}_\infty < R_j' + 1/k \text{ for } j = 1, \dots k, \\
			& \mu_\omega^{(n)}\bigl(\Gamma_{n,k}(\omega)) > 1 - 1/k \}.
		\end{align*}
		Note that $\lim_{n \to \cU} \mathbb{P}_0(\Omega_{n,k}) = 1$; indeed,
		\begin{align*}
			\mathbb{P}_0(\Omega_{n,k}^c) &\leq \mathbb{P}_0(\norm{Y_j(\omega)}_\infty > R_j + 1/k) + \mathbb{P}_0(\mu_\omega^{(n)}\bigl(\Gamma_{n,k}(\omega)^c) \geq 1/k) \\
			&\leq 
			\mathbb{P}_0(\norm{Y_j(\omega)}_\infty > R_j + 1/k) + k \int_\Omega \mu_\omega^{(n)}\bigl(\Gamma_{n,k}(\omega)^c) \,d\mathbb{P}_0,
		\end{align*}
		where we have used the union bound and Markov's inequality.  Now $\mathbb{P}_0(\norm{Y_j(\omega)}_\infty > R_j + 1/k) \to 0$ as $n \to \cU$ by assumption (4).  Meanwhile, by the definition of the conditional distribution,
		\[
		\int_{\Omega} \mu_\omega^{(n)}\bigl(\Gamma_{n,k}(\omega)^c) \,d\mathbb{P}_0 = \mathbb{P}(\{\bX \in M_n(\C)_{\sa}^m: \norm{\bX}_\infty < R_j + 1/k \text{ and } \operatorname{law}(\bX,\bY^{(n)}(\omega)) \in \cO_k \}^c),
		\]
		which converges to zero as $n \to \cU$ by our assumptions (1) and (2).  Let $\Omega_{0,k} = \Omega$ and $A_0 = \N$ and for $k \geq 1$, let
		\[
		A_k = \{n \geq k: \mathbb{P}_0(\Omega_{n,k}) \geq 1 - 1/k \}.
		\]
		Since $\mathbb{P}_0(\Omega_{n,k}) \to 1$ as $n \to \cU$, we have that $A_k \in \cU$.  Also the intersection of the $A_k$'s is empty by construction.  For each $k \in \N$, for each $n \in A_k \setminus A_{k+1}$, fix some $\omega_n \in \Omega_{n,k}$ such that
		\[
		h^{(n)}(\bX^{(n)}(\omega_n)) \geq \sup_{\omega \in \Omega_{n,k}} h^{(n)}(\bX^{(n)}(\omega)) - 1/n.
		\]
		From the definition of $\Omega_{n,k}$, we see that for $n \in A_k \setminus A_{k+1}$, we have
		\[
		\norm{Y_j(\omega_n)}_\infty < R_j' + 1/k \text{ for } j = 1, \dots k, \qquad  \text{and } \qquad \mu_{\omega_n}^{(n)}\bigl(\Gamma_{n,k}(\omega_n)) > 1 - 1/k.
		\]
		Since the $A_k$'s are nested, it follows that the same inequality holds for all $n \in A_k$.  Therefore, by the definition of $\Gamma_{n,k}$, we get that $\bX^{(n)}(\omega_n)$ and $\bY^{(n)}(\omega_n)$ satisfy the assumptions of Theorem \ref{thm: rand mx interpretation of chi^U(X | Y)}, and so
		\[
		\lim_{n \to \cU} h^{(n)}(\bX^{(n)}(\omega_n)) \leq \chi^{\cU}(\bX \mid \cB).
		\]
		Now observe by the definition of conditional entropy, if $n \in A_k \setminus A_{k+1}$, then
		\begin{align*}
			h^{(n)}(\bX^{(n)} \mid \bY^{(n)}) &= \int_\Omega h^{(n)}(\bX^{(n)}(\omega))\,d\mathbb{P}_0(\omega) \\
			&= \int_{\Omega_{n,k}} h^{(n)}(\bX^{(n)}(\omega)) \,d\mathbb{P}_0(\omega) + \int_{\Omega_{n,k}^c} h^{(n)}(\bX^{(n)}(\omega)) \,d\mathbb{P}_0(\omega) \\
			&\leq \mathbb{P}_0(\Omega_{n,k}) (h^{(n)}(\bX^{(n)}(\omega_n)) + 1/n) + \mathbb{P}_0(\Omega_{n,k}^c) M \\
			&\leq h^{(n)}(\bX^{(n)}(\omega_n)) + 1/n + M/k.
		\end{align*}
		Again, because the $A_k$'s are nested, we get $h^{(n)}(\bX^{(n)} \mid \mathbf{Y}^{(n)}) \leq h^{(n)}(\bX^{(n)}(\omega_n)) + 1/n + M/k$ for all $n \in A_k$, not only for $n \in A_k \setminus A_{k+1}$.  Hence,
		\[
		\lim_{n \to \cU} h^{(n)}(\bX^{(n)} \mid \bY^{(n)}) \leq \lim_{n \to \cU} h^{(n)}(\bX^{(n)}(\omega_n)) + 1/k + M/k \leq \chi^{\cU}(\bX \mid \cB) + 1/k + M/k.
		\]
		Since $k$ was arbitrary, we get $\lim_{n \to \cU} h^{(n)}(\bX^{(n)} \mid \bY^{(n)}) \leq \chi^{\cU}(\bX \mid \cB)$ as desired.
	\end{proof}
	
	\section{Conditional Non-Microstates Free Entropy} \label{sec: conditional non-microstates}
	
	In this section, we develop the notions of free score function and free Fisher information in order to define a conditional non-microstates free entropy, in analogue with the classical conditional entropy defined using Fisher information in Section \ref{sec: cond fisher info}.
	The following definitions are from \cite{VoiculescuFE5}, but phrased in terms of formal polynomials rather than von Neumann subalgebras with algebraically free generators.
	
	\begin{notation}
		Throughout this section, we fix $\vec x$ to denote the indeterminates $x_1, x_2, \ldots x_m$, and $\vec y$ to denote the infinite tuple of indeterminates $y_1, y_2, \ldots$.
	\end{notation}
	
	\begin{definition}
		Let $\cB$ be a unital $*$-algebra, and
		let $\cB \scal{\vec x}$ be the algebra of polynomials in $m$ non-commuting variables with coefficients from $\cB$. For $i = 1, \ldots, m$, we define the \emph{partial non-commutative derivatives $\partial_i$} or \emph{free difference quotient} as linear maps:
		\[
		\partial_i : \cB \scal{\vec x} \rightarrow \cB \scal{\vec x} \otimes \cB \scal{\vec x}
		\]
		by sending $\cB$ to $0$ and $x_j$ to $\delta_{ij} 1 \otimes 1$, and extending by the Leibniz rule
		\[
		\partial_i(P_1 P_2) = \partial_i(P_1) \cdot (1 \otimes P_2)  + (P_1 \otimes 1) \cdot \partial_i(P_2) \qquad (\text{for } P_1, P_2 \in \cB \scal{\vec x}).
		\]
		We also extend the non-commutative derivatives $\partial_i$ for $i = 1, \ldots, m$ to the algebra of polynomials in countably many non-commuting variables $\cB \scal{\vec x, \vec y}.$ 
	\end{definition}
	
	As motivation for the free score function, we recall the following relationship between the free difference quotient and differentiation of functions on the matrix algebra.  In Lemma \ref{lem: NC polynomial derivative}, Lemma \ref{lem: NC polynomial divergence}, and Corollary \ref{cor: IBP matrix version}, the subalgebra $\cB$ is absent. Instead, we use an infinite collection of parameters $Y_1$, $Y_2$, \dots which we will take to be matrix approximations for generators of $\cB$. (In spirit, we are replacing $\cB \scal{\vec x}$ with $\C \scal{\vec x, Y_1, Y_2, \ldots}$ when $Y_1$, $Y_2$, \dots are self-adjoint generators of $\cB$ and the $x_j$'s are formal variables; see Remark \ref{rmk: C(x,y) vs B(x)}).  
	In the next two lemmas, we use the following notation:  For an algebra $A$, let $\#: (A \otimes A) \times A \to A$ be the bilinear map $(a \otimes b,c) \mapsto acb =: (a \otimes b) \# c$.
	
	\begin{lemma} \label{lem: NC polynomial derivative}
		Let $f \in \C \scal{\vec x, \vec y}$.  Let $f^{(n)}$ denote the function $M_n(\C)_{\sa}^{\N} \to M_n(\C)$ given by evaluation of $f$.  Let $D_j f^{(n)}$ denote the Fr\'echet derivative of $f^{(n)}$ as a function of $X_j$ mapping $M_n(\C)_{\sa} \to M_n(\C) \cong M_n(\C)_{\sa} \otimes_{\R} \C$, that is, $D_j f^{(n)}(X_1,\dots,X_m,Y_1,Y_2,\dots)$ is the linear transformation $M_n(\C) \to M_n(\C)$ such that for $A \in M_n(\C)_{\sa}$,
		\[
		D_j f^{(n)}(X_1,\dots,X_m,Y_1,Y_2,\dots)[A] = \frac{d}{dt} \bigr|_{t=0} f^{(n)}(X_1,\dots,X_{j-1},X_j+tA,X_{j+1},\dots,X_m,Y_1,Y_2,\dots).
		\]
		Then we have
		\[
		D_j f^{(n)}(X_1,\dots,X_m,Y_1,Y_2,\dots)[A] = (\partial_j f)^{(n)}(X_1,\dots,X_m,Y_1,Y_2,\dots) \# A,
		\]
		where $(\partial_j f)^{(n)}$ is function $M_n(\C)_{\sa}^{\N} \to M_n(\C) \otimes M_n(\C)$ given by evaluation of both tensorands of $\partial_j f$ as polynomials on the input matrices $X_1$, \dots, $X_m$, $Y_1$, $Y_2$, \dots.
	\end{lemma}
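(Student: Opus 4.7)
The plan is to prove the identity by induction on the structure of $f \in \C\langle \vec{x}, \vec{y}\rangle$, exploiting the fact that both sides of the claimed equation depend linearly on $f$ and satisfy a compatible product rule. Since $\C\langle \vec x, \vec y\rangle$ is spanned by monomials in the indeterminates, it suffices to check: (i) base cases on the generators, (ii) linearity, and (iii) the Leibniz-type behavior under multiplication.

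For the base cases, first note that when $f \in \C$ is a scalar, both $D_j f^{(n)}$ and $\partial_j f$ vanish by definition, so the identity holds trivially. When $f = x_k$, we have $f^{(n)}(X_1, \dots, X_m, Y_1, Y_2, \dots) = X_k$, so $D_j f^{(n)}[A] = \delta_{jk} A$; on the other side, $\partial_j x_k = \delta_{jk}(1 \otimes 1)$, and $(1 \otimes 1) \# A = A$, so again we get $\delta_{jk}A$. When $f = y_k$, the function $f^{(n)}$ does not depend on $X_j$, so $D_j f^{(n)}[A] = 0$; correspondingly $\partial_j y_k = 0$, and the identity holds.

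For the inductive step, assume the identity holds for $f_1, f_2 \in \C\langle \vec x, \vec y\rangle$. By the ordinary product rule for Fréchet derivatives applied to the matrix-valued map $(X_1,\dots,X_m) \mapsto f_1^{(n)} \cdot f_2^{(n)}$, we get
\[
D_j (f_1 f_2)^{(n)}[A] = \bigl(D_j f_1^{(n)}[A]\bigr) \cdot f_2^{(n)} + f_1^{(n)} \cdot \bigl(D_j f_2^{(n)}[A]\bigr).
\]
On the other hand, using the Leibniz rule for $\partial_j$ together with the easily verified compatibility
\[
\bigl((a \otimes b) \cdot (1 \otimes c)\bigr) \# A = a A b c, \qquad \bigl((a \otimes 1) \cdot (b \otimes c)\bigr) \# A = a b A c,
\]
we obtain
\[
\bigl(\partial_j (f_1 f_2)\bigr)^{(n)} \# A = \bigl((\partial_j f_1)^{(n)} \# A\bigr) \cdot f_2^{(n)} + f_1^{(n)} \cdot \bigl((\partial_j f_2)^{(n)} \# A\bigr).
\]
By the inductive hypothesis applied to each summand, the two expressions agree. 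Combined with the linearity of both sides in $f$, this closes the induction and yields the claim for all non-commutative polynomials $f$.

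I do not anticipate any genuine obstacle here; the only delicate point is verifying that the $\#$ operation intertwines correctly with the product structure on $\C\langle \vec x, \vec y\rangle \otimes \C\langle \vec x, \vec y\rangle$, i.e.\ that the two compatibility identities above hold after evaluation on matrices. Once this bookkeeping is in place, the induction is entirely mechanical.
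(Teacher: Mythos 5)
Your proof is correct, and it follows essentially the same route as the paper, which simply notes that the identity is verified on monomials by direct computation and extended by linearity (citing \cite[Lemma 14.1.3]{JekelThesis} for details); your structural induction via the Leibniz rule together with the two compatibility identities for $\#$ is just an organized way of carrying out that same monomial computation. The base cases and both $\#$-compatibility checks are accurate, so no gap remains.
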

	
	This lemma is proved for monomials by direct computation and then extended by linearity.  For details, see e.g. \cite[Lemma 14.1.3]{JekelThesis}.  Similar statements were shown earlier in \cite{Cebron2013} and \cite[\S 3]{DHK2013}.
	
	\begin{lemma} \label{lem: NC polynomial divergence}
		Let $f \in \C\scal{\vec x, \vec y}$.  Then $\frac{1}{n^2} \nabla_j \cdot f^{(n)} = \tr_n \otimes \tr_n((\partial_j f)^{(n)})$, where $\nabla_j \cdot$ denotes the divergence operator on $f^{(n)}$ as a function of $X_j$.
	\end{lemma}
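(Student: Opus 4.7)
My plan is to reduce the identity to a single algebraic fact about an orthonormal basis of $M_n(\C)_{\sa}$ with respect to $\langle\cdot,\cdot\rangle_{\tr_n}$; concretely I will use the basis $\mathcal{E}$ displayed in Notation \ref{not: matrix space}.

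First, by Lemma \ref{lem: NC polynomial derivative}, the Fr\'echet derivative of $f^{(n)}$ in the $X_j$-direction is $D_j f^{(n)}[A] = (\partial_j f)^{(n)}\# A$. Since $f$ is a polynomial, $(\partial_j f)^{(n)}$ is a finite sum of elementary tensors $\sum_k U_k\otimes V_k \in M_n(\C)\otimes M_n(\C)$, so $D_j f^{(n)}[A] = \sum_k U_k A V_k$. Unfolding the definition of divergence with respect to $\langle\cdot,\cdot\rangle_{\tr_n}$, extended $\C$-linearly so that it makes sense for the $M_n(\C)$-valued map $f^{(n)}$, and using that each $E\in\mathcal{E}$ is self-adjoint, I obtain
\[
\nabla_j\cdot f^{(n)} \;=\; \sum_{E\in\mathcal{E}} \tr_n\bigl(E\,D_j f^{(n)}[E]\bigr) \;=\; \sum_k \sum_{E\in\mathcal{E}} \tr_n(E U_k E V_k).
\]

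The main technical step is to establish the identity
\[
\sum_{E\in\mathcal{E}} E U E \;=\; n^2\,\tr_n(U)\,I \qquad \text{for every } U\in M_n(\C).
\]
Once this is in hand, pulling $V_k$ inside and applying linearity of $\tr_n$ gives $\sum_{E}\tr_n(E U_k E V_k) = n^2\tr_n(U_k)\tr_n(V_k)$; summing over $k$ yields $\nabla_j\cdot f^{(n)} = n^2(\tr_n\otimes\tr_n)((\partial_j f)^{(n)})$, which is the claim after dividing by $n^2$.

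The displayed identity is the crux, and I would verify it by direct computation on the three families of basis elements in $\mathcal{E}$. The diagonal elements $\sqrt{n}\,E_{jj}$ contribute $n\sum_j U_{jj}E_{jj}$. For each pair $j<k$, the Hermitian vector $\sqrt{n/2}(E_{jk}+E_{kj})$ and the anti-Hermitian vector $i\sqrt{n/2}(E_{jk}-E_{kj})$ together contribute $n(U_{jj}E_{kk}+U_{kk}E_{jj})$: the off-diagonal entries $U_{jk},U_{kj}$ appear with opposite signs in the two pieces and cancel, while the diagonal entries $U_{jj},U_{kk}$ add. Summing over $j<k$ yields $n\sum_a(\Tr(U)-U_{aa})E_{aa}$, which when combined with the diagonal contribution gives $n\Tr(U)\,I = n^2\tr_n(U)\,I$, as required. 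A more conceptual alternative is to note that $U\mapsto\sum_E EUE$ is equivariant under conjugation by unitaries, so by Schur's lemma it has the form $aU + b\,\tr_n(U)I$; the coefficients are then pinned down to $a=0$, $b=n^2$ by testing on $U=I$ (using $\sum_E E^2 = n^2 I$) and on a non-scalar $U$.
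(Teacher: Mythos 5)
Your proof is correct, and it differs from the paper's in how the key trace identity is established. The paper likewise starts from Lemma \ref{lem: NC polynomial derivative} and the fact that the divergence is the trace of the Fr\'echet derivative, but it then argues abstractly: the map $\Phi: M_n(\C)\otimes M_n(\C)^{\operatorname{op}}\to B(M_n(\C))$, $\Phi(A\otimes B)[C]=ACB$, is an isomorphism, and by uniqueness of the trace on $M_n(\C)\otimes M_n(\C)^{\operatorname{op}}\cong M_{n^2}(\C)$ one gets $\Tr_{B(M_n(\C))}[\Phi(T)]=\Tr_n\otimes\Tr_n(T)$, which gives the lemma after dividing by $n^2$. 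You instead compute the same trace concretely in the $\tr_n$-orthonormal basis $\mathcal{E}$ of $M_n(\C)_{\sa}$, reducing everything to the identity $\sum_{E\in\mathcal{E}}EUE=n^2\tr_n(U)I$; your entrywise verification (cancellation of the off-diagonal contributions, and the normalization $\sum_E E^2=n^2I$) checks out, as does the final assembly $\sum_E\tr_n(EU_kEV_k)=n^2\tr_n(U_k)\tr_n(V_k)$. The paper's route is shorter and basis-free; yours is more elementary and has the side benefit of making explicit the real-versus-complex bookkeeping the paper glosses over, namely that the divergence is defined through the real inner product space $(M_n(\C)_{\sa},\ip{\cdot,\cdot}_{\tr_n})$ while $(\partial_j f)^{(n)}\#(\cdot)$ acts complex-linearly on $M_n(\C)$, which your formula $\nabla_j\cdot f^{(n)}=\sum_{E\in\mathcal{E}}\tr_n\bigl(E\,D_jf^{(n)}[E]\bigr)$ (using self-adjointness of each $E$) reconciles directly. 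Your Schur-lemma alternative is also fine, with the minor caveat that pinning down $a=0$ requires evaluating on one explicit non-scalar $U$, i.e.\ a small computation of the kind your main argument already contains.
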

	
	\begin{proof}
		Recall that the divergence is the trace of the Fr\'echet derivative.  Note that there is an isomorphism $\Phi$ from $M_n(\C) \otimes M_n(\C)^{\operatorname{op}}$ to the space of linear operators $B(M_n(\C))$ given by $\Phi(A \otimes B)[C] = (A \otimes B) \# C$.  Because of the uniqueness of the trace on $M_n(\C) \otimes M_n(\C)^{\operatorname{op}} \cong M_{n^2}(\C)$, it follows that $\Tr_{B(M_n(\C))}[\Phi(T)] = \Tr_n \otimes \Tr_n(T)$ for $T \in M_n(\C) \otimes M_n(\C)^{\operatorname{op}}$.  Therefore,
		\[
		\nabla_j \cdot f^{(n)} = \Tr_{B(M_n(\C))}(D_j f^{(n)}) = \Tr_n \otimes \Tr_n((\partial_j f)^{(n)}).
		\]
		Dividing by $n^2$ proves the asserted formula.
	\end{proof}
	
	This yields the following integration-by-parts formula for random matrix models.  This is closely related to the Schwinger-Dyson equation for free Gibbs laws; see e.g.\ \cite[\S 4.3.20]{GuionnetParkCity}.
	
	\begin{corollary} \label{cor: IBP matrix version}
		Let $\mathbf{X}$ be an $M_n(\C)_{\sa}^m$-valued random variable with finite moments, and let $\Xi$ be a score function for $\mathbf{X}$.  Let $Y_1$, $Y_2$, \dots be deterministic self-adjoint matrices.  Let $f_1, \dots, f_m \in \C\scal{\vec x, \vec y}$ and $f = (f_1,\dots,f_m)$.  Then
		\[
		\frac{1}{n^2} \E \scal{\Xi_j, \, f^{(n)}(X_1,\dots,X_m,Y_1,Y_2,\dots)}_{\tr_n} = \E \tr_n \otimes \tr_n [(\partial_j f)^{(n)}(X_1,\dots,X_m,Y_1,Y_2,\dots)]
		\]
	\end{corollary}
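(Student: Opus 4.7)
The plan is to deduce this corollary from the classical polynomial-growth integration-by-parts formula in Lemma \ref{lem: poly growth IBP}, applied to $\mathbf{X}$ viewed as a random variable in the Euclidean space $(M_n(\C)_{\sa}^m, \ip{\cdot,\cdot}_{\tr_n})$. By the conventions of Notation \ref{not: matrix space} and \S \ref{subsec: normalization}, the Lebesgue measure, gradient, divergence, and score function $\Xi$ for $\mathbf{X}$ are all understood with respect to this inner product, so the classical identity $\E\scal{\Xi, g(\mathbf{X})}_{\tr_n} = \E[\nabla \cdot g(\mathbf{X})]$ for nice enough test functions $g : M_n(\C)_{\sa}^m \to M_n(\C)^m$ is directly available.

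First I would fix an index $j$ and define the test function $g : M_n(\C)_{\sa}^m \to M_n(\C)^m$ by placing $f^{(n)}(\,\cdot\,, Y_1, Y_2, \dots)$ in the $j$-th slot and $0$ elsewhere, so that $\scal{\Xi, g(\mathbf{X})}_{\tr_n} = \scal{\Xi_j, f^{(n)}(\mathbf{X}, Y_1, Y_2, \dots)}_{\tr_n}$ and the divergence of $g$ reduces to $\nabla_{X_j} \cdot f^{(n)}$. Because $f$ is a non-commutative polynomial and the $Y_i$'s are fixed matrices, $g$ is a polynomial function in the real coordinates of the entries of $\mathbf{X}$; therefore $|g(\mathbf{X})|$ and $\norm{Dg(\mathbf{X})}$ are each bounded above by a polynomial in $|\mathbf{X}|$, and in particular they satisfy the growth hypothesis of Lemma \ref{lem: poly growth IBP}. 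Together with the assumption that $\mathbf{X}$ has finite moments, that lemma yields
\[
\E \scal{\Xi_j, \, f^{(n)}(\mathbf{X}, Y_1, Y_2, \dots)}_{\tr_n} = \E\!\left[ \nabla_{X_j} \cdot f^{(n)}(\mathbf{X}, Y_1, Y_2, \dots) \right].
\]

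Next I would invoke Lemma \ref{lem: NC polynomial divergence}, which identifies $\nabla_{X_j} \cdot f^{(n)}$ with $n^2 \cdot (\tr_n \otimes \tr_n)\!\left((\partial_j f)^{(n)}\right)$; substituting this into the previous equation and dividing by $n^2$ gives exactly the stated identity. The only thing that requires care is the normalization: using $\ip{\cdot,\cdot}_{\tr_n}$ rather than $\ip{\cdot,\cdot}_{\Tr_n}$ inflates the orthonormal basis of $M_n(\C)_{\sa}$ by a factor of $\sqrt{n}$, producing the $n^2$ appearing in Lemma \ref{lem: NC polynomial divergence} which then cancels against the $1/n^2$ on the left-hand side of the corollary. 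There is no substantive obstacle beyond keeping this factor straight; the real work has already been done in Lemmas \ref{lem: poly growth IBP} and \ref{lem: NC polynomial divergence}, and the corollary is a one-line application combining them.
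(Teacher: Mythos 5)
Your proposal is correct and follows essentially the same route as the paper: apply the polynomial-growth integration-by-parts formula of Lemma \ref{lem: poly growth IBP} to the polynomial test function $f^{(n)}(\cdot,Y_1,Y_2,\dots)$ (the paper applies it to the full tuple and then decomposes the divergence as $\sum_j \nabla_j \cdot f_j^{(n)}$, while you restrict to the $j$-th slot from the start, a cosmetic difference), and then identify $\nabla_j \cdot f^{(n)} = n^2\, \tr_n \otimes \tr_n\bigl((\partial_j f)^{(n)}\bigr)$ via Lemma \ref{lem: NC polynomial divergence}. Your remark about the $\tr_n$ versus $\Tr_n$ normalization matches the paper's conventions, so no gap remains.
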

	
	\begin{proof}
		Because $f^{(n)}$ is a polynomial function, Lemma \ref{lem: poly growth IBP} shows that
		\[
		\E \scal{\Xi, f^{(n)}(X_1,\dots,X_m,Y_1,Y_2,\dots) }_{\tr_n} = \E \nabla_X \cdot f^{(n)}(X_1,\dots,X_m,Y_1,Y_2,\dots)
		\]
		Note that $\nabla_X \cdot f^{(n)}$, the divergence with respect to $X$, can be expressed as $\sum_{j=1}^m \nabla_j \cdot f_j^{(n)}$.  Finally, we apply Lemma \ref{lem: NC polynomial divergence} to evaluate $\nabla_j \cdot f_j^{(n)}$.
	\end{proof}
	
	Voiculescu's free score function is defined to satisfy an analog of this formula on $\cB$-valued non-commutative polynomials.

	
	\begin{definition}
		Let $\mathcal{B}$ be a unital $*$-subalgebra of $(\mathcal{M}, \tau)$, and $\bX = (X_1, \ldots, X_m)$ an $m$-tuple of self-adjoint elements in $(\mathcal{M}, \tau)$. 
		Let $\cB \scal{\bX} \subset \cM$ denote the subalgebra generated by $\cB$ and $\bX$.
		An element $\xi = (\xi_1, \ldots, \xi_m) \in L^2(\mathrm{W}^*(\cB \scal{\bX}))^m$ is the \emph{free score function of $\bX$ with respect to $\mathcal{B}$} (also called the conjugate variable of $\bX$ with respect to $\cB$), if
		\[
		\tau(\xi_i p (\bX)) = \tau \otimes \tau( \partial_i p(\bX)) \text{ for all } p(x) \in \cB\scal{\vec x} \text{ and } 1 \leq i \leq m.
		\]
		Such an element $\xi$ will be denoted $\mathscr{J}(\bX: \cB)$.
	\end{definition}

	\begin{definition} \label{def: cond free Fisher info}                
		Let $\mathcal{B} \subset (\mathcal{M}, \tau)$ be a unital $*$-subalgebra and $\bX = (X_1, \ldots, X_m)$ an $m$-tuple of self-adjoint elements of $\mathcal{M}$. 
		We define the \emph{relative free Fisher information of $\bX$ with respect to $\mathcal{B}$} to be
		\[
		\Phi^*(\bX: \mathcal{B}) = 
		\norm{\mathscr{J}(\bX: \cB \scal{\bX})}_2^2 = 
		\sum_{1 \leq j \leq m} \norm{\mathscr{J}(X_j: \cB \scal{\bX}) }_2^2.
		\]
		Otherwise, we define $\Phi^*(\bX: \cB) = \infty$. We will also use the notation $\Phi^*(\bX: \bY)$ when $\bY$ is a self-adjoint tuple which generates $\cB$.
	\end{definition}
	
	\begin{lemma} \label{lemma: cond tau-char-of-phi*}
		Let $\bX = (X_1, X_2, \ldots, X_m)$ be a tuple from $(\cM, \tau)_{\sa}$. Then
		\[
		\Phi^*(\bX : \cB) = \sup \{ | \tau \otimes \tau (\partial f (\bX))|^2 \ : \ f(x) \in \cB\scal{\vec x}^m, \ \norm{f(\bX)}_2 \leq 1 \}.
		\]
	\end{lemma}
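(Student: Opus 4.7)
The plan is to recognize both sides as the squared norm of the linear functional $L \colon \cB\scal{\vec x}^m \to \C$ given by $L(f) = \tau \otimes \tau(\partial f(\bX)) := \sum_{i=1}^m \tau \otimes \tau(\partial_i f_i(\bX))$, where the domain carries the seminorm $f \mapsto \norm{f(\bX)}_2 := \bigl(\sum_i \norm{f_i(\bX)}_2^2\bigr)^{1/2}$. The defining relation for $\mathscr{J}(\bX:\cB\scal{\bX})$ identifies $L$ with the $L^2$-inner product against the score function when it exists, and otherwise forces $L$ to be unbounded; either way the usual Hilbert-space duality $\norm{\xi}_2 = \sup_{\norm{\eta}_2 \leq 1} |\scal{\xi,\eta}|$ gives the desired formula.

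If $\Phi^*(\bX:\cB) < \infty$, write $\xi = \mathscr{J}(\bX:\cB\scal{\bX}) \in L^2(\mathrm{W}^*(\cB\scal{\bX}))^m$. For any $f \in \cB\scal{\vec x}^m$, applying the defining relation to each component and summing yields
\[
\tau \otimes \tau(\partial f(\bX)) = \sum_{i=1}^m \tau(\xi_i f_i(\bX)) = \scal{\xi, f(\bX)}_{L^2},
\]
and Cauchy-Schwarz delivers $|\tau\otimes\tau(\partial f(\bX))|^2 \leq \norm{\xi}_2^2 \norm{f(\bX)}_2^2 = \Phi^*(\bX:\cB) \norm{f(\bX)}_2^2$, proving the $\geq$ direction. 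For the matching lower bound I would use density of $\{p(\bX) : p \in \cB\scal{\vec x}\}$ in $L^2(\mathrm{W}^*(\cB\scal{\bX}))$ (a consequence of Kaplansky density plus spectral truncation) to approximate each $\xi_i / \norm{\xi}_2$ in $L^2$ by $f_i(\bX)$ with $f_i \in \cB\scal{\vec x}$; a mild rescaling enforces $\norm{f(\bX)}_2 \leq 1$, after which $|\scal{\xi, f(\bX)}|^2$ can be made arbitrarily close to $\norm{\xi}_2^2 = \Phi^*(\bX:\cB)$.

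If $\Phi^*(\bX:\cB) = \infty$, then at least one of the functionals $L_i \colon p \mapsto \tau\otimes\tau(\partial_i p(\bX))$ on $\cB\scal{\vec x}$ is unbounded with respect to the seminorm $\norm{p(\bX)}_2$. Indeed, if every $L_i$ were bounded, extending each by continuity to the $L^2$-closure and invoking Riesz would produce a tuple $\xi$ satisfying the conjugate variable relation, contradicting the nonexistence of the score function. Picking $p_n \in \cB\scal{\vec x}$ with $\norm{p_n(\bX)}_2 \leq 1$ and $|L_i(p_n)| \to \infty$, placing $p_n$ in the $i$th slot of $f$ and zeros elsewhere gives $\norm{f(\bX)}_2 \leq 1$ with $|\tau\otimes\tau(\partial f(\bX))|^2 \to \infty$, so the supremum equals $\infty$. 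The one subtle point is ensuring that the Riesz step produces each $\xi_i$ inside $L^2(\mathrm{W}^*(\cB\scal{\bX}))$ as the score-function definition requires; this is automatic because $L_i$ is defined on the subspace $\{p(\bX) : p \in \cB\scal{\vec x}\}$ whose $L^2$-closure is precisely this Hilbert space, so the Riesz representer lies there.
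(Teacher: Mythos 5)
Your proposal is correct and follows essentially the same route as the paper: both directions are handled by the same Hilbert-space duality, with Cauchy--Schwarz plus density of $\{f(\bX) : f \in \cB\scal{\vec x}^m\}$ giving the equality when the conjugate variable exists, and a boundedness/Riesz argument showing that finiteness of the supremum forces existence of the conjugate variable (your contrapositive phrasing of the infinite case is logically the same as the paper's). Your remark that the Riesz representer automatically lies in $L^2(\mathrm{W}^*(\cB\scal{\bX}))$ is a welcome clarification of a point the paper passes over lightly.
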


	\begin{proof}
		First, if both sides are infinite, then there is nothing to show; it suffices to consider the cases where either $\Phi^*(\bX :  \cB)$ or the right-hand side above are finite.
		
		\noindent 
		If $\Phi^*(\bX: \cB) < \infty$, then by Definition \ref{def: cond free Fisher info},
		there is a conjugate variable $\xi := \mathscr{J}(\bX: \cB\scal{\bX}) \in L^2(\cM)^m$ satisfying
		\[
		\scal{\xi, f(\bX)}_\tau = \tau \otimes \tau(\partial f(\bX)) \text{ for all } f(x) \in \cB \scal{\vec x}^m.
		\]
		The result then follows:
		\begin{align*}
			\Phi^*(\bX :  \cB) = \norm{\xi}_{2}^2 &= \sup \{| \scal{\xi, f(\bX)}_\tau|^2 \ : \ \norm{f(\bX)}_2 \leq 1\} \\
			&= \sup \{ |\tau \otimes \tau (\partial f(\bX)) |^2 \ : \ \norm{f(\bX)}_2 \leq 1 \}.
		\end{align*}
		
		\noindent 
		On the other hand, if the supremum in the right-hand side above is finite, then letting $C < \infty$ be such that
		\[
		\sup \{ | \tau \otimes \tau (\partial f (\bX))|^2 \ : \ f(x) \in \cB \scal{\vec x}^m, \ \norm{f(\bX)}_{2} \leq 1 \} \leq C < \infty,
		\]
		then we have for all polynomials $f(x) \in \cB \scal{\vec x}^m$,
		\[
		| \tau \otimes \tau (\partial f(\bX)) | \leq C \norm{f(\bX)}_2.
		\]
		Because non-commutative polynomials are dense in $L^2(\cM)$, there is a bounded linear functional $\phi: L^2(\cM) \to \C$ extending $f(\bX) \mapsto \tau \otimes \tau(\partial f(\bX))$, and by the Riesz representation theorem, there is a vector $\xi \in L^2(\cM)$ satisfying 
		\[
		\scal{\xi, f(\bX)}_\tau = \phi(f(\bX)) = \tau \otimes \tau (\partial f(\bX)) \text{ for all } f(x) \in \cB \scal{\vec x}^m.
		\]
		Then $\xi$ is a conjugate variable for $\bX$ and by definition $\Phi^*(\bX :  \cB) < \infty$, so the equality follows by the argument given above.
	\end{proof}
	
	\begin{remark} \label{rmk: C(x,y) vs B(x)}
		The supremum on the right-hand side of Lemma \ref{lemma: cond tau-char-of-phi*} can be rephrased in terms of formal polynomials $p(\vec x, \vec y) \in \C \scal{\vec x, \vec y}$. 
		
		Indeed, since $\mathrm{W}^*(\bY) = \cB$, given any $f(\vec x) \in \cB \scal{\vec x}^m$, and any $\cB$-valued coefficient $c$ of $f$, we can choose a sequence of polynomials $p^c_n(\vec y) \in \C \scal{\vec y}^m$ so that $\norm{c - p^c_n(\bY)}_2 \rightarrow 0.$
		Also by the Kaplansky density theorem (similar to Lemma \ref{lem: polynomial approximation}) we can choose these to be uniformly bounded in operator norm, in particular $\norm{p^c_n(\bY)}_{\op} \leq \norm{c}_{\op}$.
		By taking the sum of polynomial approximations of the $\cB$-valued coefficients of monomials in $f(\vec x)$, we obtain polynomial approximations $p^f_n(\vec x, \bY)$ satisfying
		\[
		\norm{f(\bX) - p^f_n(\bX, \bY)}_2 \rightarrow 0,
		\]
		with $\norm{p^f_n(\bX, \bY)}_{\op}$ uniformly bounded. 
		
		Now write $f(\vec x) = \lim_n p^f_n(\vec x, \bY)$, and note 
		\begin{align*}
			\sup_{\substack{ f(\vec x) \in \cB \scal{\vec x}^m \\ \norm{f}_2 \leq 1}} \left\{ \left| \tau \otimes \tau \left( \partial f (\bX) \right) \right|^2 \right\} &= \sup_{\substack{ f(\vec x) \in \cB \scal{\vec x}^m \\ \norm{f}_2 \leq 1}} \left\{ \left| \tau \otimes \tau \left( \partial (\lim_n p^f_n(\bX, \bY)) \right) \right|^2 \right\} \\
			&= \sup_{\substack{ f(\vec x) \in \cB \scal{\vec x}^m \\ \norm{f}_2 \leq 1}} \left\{ \left| \tau \otimes \tau \left( \lim_n \partial p^f_n(\bX, \bY) \right) \right|^2 \right\} \\
			&\leq \sup_{\substack{ p(\vec x, \vec y) \in \C \scal{\vec x, \vec y}^m \\ \norm{p}_2 \leq 1}} \left\{ \left| \tau \otimes \tau \left( \partial p (\bX, \bY) \right) \right|^2 \right\},
		\end{align*}
		where we can exchange the limit with the derivative in the second equality because for each of the $\cB$-valued coefficients $c$ in the polynomial $f$, $p_n^c(\vec y)$ is uniformly bounded in operator norm and converges in $\norm{\cdot}_2$ to $c$.  The other inequality is immediate as there is a natural identification of $\{p(\vec x, \bY) \ : \ p(\vec x, \vec y) \in \C \scal{ \vec x, \vec y}\}$ as elements of $\mathcal{B} \scal{\vec x}$ (by considering any $\bY$-terms as coefficients).
		Thus, we have that  
		\[
		\sup_{\substack{ f(x) \in \cB \scal{\vec x}^m \\ \norm{f}_2 \leq 1}} \left\{ \left| \tau \otimes \tau \left( \partial f (\bX) \right) \right|^2 \right\} = \sup_{\substack{ p(x, y) \in \C \scal{\vec x, \vec y}^m \\ \norm{p}_2 \leq 1}} \left\{ \left| \tau \otimes \tau \left( \partial p (\bX, \bY) \right) \right|^2 \right\}.
		\]
	\end{remark}

	\begin{definition} \label{def: relative nm free ent}
		Let $\cB \subseteq \cM$ be a unital $*$-subalgebra generated by the tuple of self-adjoint elements $\bY$, and let $\bX$ an $m$-tuple of self-adjoint elements of $\cM$. We define the \emph{relative non-microstates free entropy of $\bX$ with respect to $\cB$} as
		\[
		\chi^*(\bX  :   \cB) = \frac{1}{2} \int_0^\infty \left( \frac{m}{1 + t} - \Phi^*(\bX + t^{1/2} \mathbf{S} :  \cB) \right) \, dt + \frac{m}{2} \log 2\pi e,
		\]
		where $\mathbf{S} = (S_1, \ldots, S_m)$ is an $m$-tuple of freely independent standard semicircular variables which are also free from $\cB\scal{\bX}$. We also write $\chi^*(\bX :  \bY)$ for this quantity.
	\end{definition}

	\section{Proof of the Main Result} \label{sec: main proof}

	The next step toward proving the main result in Theorem \ref{thm: cond main} is the following proposition relating the free and the classical Fisher information.  The intuition for this statement is related to \cite[\S 4.6]{ShlyakhtenkoParkCity}.
	
	\begin{proposition} \label{prop: cond Phi* upper bound}
		Let $\bX = (X_1, \ldots, X_m)$ be an $m$-tuple of self-adjoint elements in $(\cM, \tau)$, and fix $\bY$ a tuple of generators for a von Neumann subalgebra $\mathcal{B} \subseteq \cM$. 
		Fix a sequence $\bY^{(n)} \in M_n(\C)^\N_{\sa}$ of deterministic microstates for $\bY$ such that $\norm{Y_j^{(n)}}$ is uniformly bounded for each $j$.
		Suppose that ${\bX^{(n)} \in M_{n}(\C)^{m}_{\sa}}$ is a sequence of random matrix tuples with finite moments such that the law of $(\bX^{(n)},\bY^{(n)})$ converges in probability to the law of $(\bX,\bY)$ and for each $k$, we have  \\
		${\lim_{n \to \cU} \E \tr_n((X_j^{(n)})^{2k}) < \infty}$ and $\lim_{n \to \cU} \tr_n((Y_j^{(n)})^{2k}) < \infty$. 
		Then
		\begin{equation} \label{eq: cond Phi* upper bound}
			\Phi^*(\bX :  \bY) \leq \lim_{n \rightarrow \cU} \frac{1}{n^4}  \ \mathcal{I}(\bX^{(n)}) = \lim_{n \to \cU} \mathcal{I}^{(n)} (\bX^{(n)}).
		\end{equation}
	\end{proposition}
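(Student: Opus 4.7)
The plan is to combine the polynomial-supremum characterization of $\Phi^*(\bX : \bY)$ from Lemma \ref{lemma: cond tau-char-of-phi*} and Remark \ref{rmk: C(x,y) vs B(x)} with the matrix integration-by-parts identity of Corollary \ref{cor: IBP matrix version}. First I would assume without loss of generality that $\lim_{n \to \cU} \mathcal{I}^{(n)}(\bX^{(n)}) < \infty$, since otherwise \eqref{eq: cond Phi* upper bound} is trivial; after passing to a sub-ultrafilter if necessary, the classical score function $\Xi^{(n)}$ for $\bX^{(n)}$ (relative to the $\tr_n$-inner product on $M_n(\C)_{\sa}^m$) exists for $\cU$-many $n$. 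By the polynomial characterization of $\Phi^*$, it then suffices to prove that for every $p = (p_1, \ldots, p_m) \in \C \scal{\vec x, \vec y}^m$ with $\norm{p(\bX, \bY)}_2 \leq 1$,
\[
|\tau \otimes \tau(\partial p(\bX, \bY))|^2 \leq \lim_{n \to \cU} \mathcal{I}^{(n)}(\bX^{(n)}),
\]
where $\partial p := \sum_j \partial_j p_j$.

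For each fixed $n$, since $\bY^{(n)}$ is deterministic and $\bX^{(n)}$ has finite moments, Corollary \ref{cor: IBP matrix version} summed over $j$ gives
\[
\E \, (\tr_n \otimes \tr_n)[\partial p^{(n)}(\bX^{(n)}, \bY^{(n)})] = \frac{1}{n^2} \sum_j \E \ip{\Xi_j^{(n)}, p_j^{(n)}(\bX^{(n)}, \bY^{(n)})}_{\tr_n}.
\]
Applying Cauchy-Schwarz first in the $\tr_n$-inner product on $M_n(\C)_{\sa}^m$ and then on the underlying probability space, together with the normalization $\mathcal{I}^{(n)}(\bX^{(n)}) = n^{-4}\, \E \norm{\Xi^{(n)}}_{\tr_n}^2$, yields
\[
\bigl|\E\,(\tr_n \otimes \tr_n)[\partial p^{(n)}]\bigr|^2 \leq \mathcal{I}^{(n)}(\bX^{(n)}) \cdot \E \norm{p^{(n)}(\bX^{(n)}, \bY^{(n)})}_2^2.
\]

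The remaining step is to take the ultralimit $n \to \cU$. Since $(\bX^{(n)}, \bY^{(n)})$ converges to $(\bX, \bY)$ in non-commutative law in probability, for every joint polynomial $q$ we have $\tr_n(q(\bX^{(n)}, \bY^{(n)})) \to \tau(q(\bX, \bY))$ in probability. Combined with the hypothesized moment bounds, the non-commutative H\"older inequality implies that $\tr_n(q(\bX^{(n)}, \bY^{(n)}))$ is uniformly $L^r$-bounded along $\cU$ for every finite $r$, so uniform integrability upgrades convergence in probability to convergence in expectation. Applying this with $q = p^*p$ controls the right-hand factor, and applying it to each tensor term $a \otimes b$ of $\partial p$ controls the left-hand side (since $\tr_n \otimes \tr_n[a \otimes b] = \tr_n(a)\tr_n(b)$ is a product of two uniformly $L^r$-bounded traces and hence converges in $L^1$ to $\tau(a)\tau(b)$). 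Passing to the limit gives $|\tau \otimes \tau(\partial p(\bX, \bY))|^2 \leq \lim_{n \to \cU} \mathcal{I}^{(n)}(\bX^{(n)}) \cdot \norm{p(\bX, \bY)}_2^2$, and taking the supremum over admissible $p$ finishes the proof.

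The principal obstacle I anticipate is not the algebra, which is essentially automatic from the matrix integration-by-parts identity and Cauchy-Schwarz, but the uniform-integrability step described above. Carefully exploiting the bounds $\lim_{n \to \cU} \E \tr_n((X_j^{(n)})^{2k}) < \infty$ and $\lim_{n \to \cU} \tr_n((Y_j^{(n)})^{2k}) < \infty$ to obtain the required $L^r$-bounds on $\tr_n(q(\bX^{(n)}, \bY^{(n)}))$ calls for applying H\"older first on the matrix algebra, to bound $|\tr_n(q)|$ by a product of single-variable moment traces, and then on the probability space.
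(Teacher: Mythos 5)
Your proposal is correct and follows essentially the same route as the paper's proof: the polynomial characterization of $\Phi^*$ from Lemma \ref{lemma: cond tau-char-of-phi*} and Remark \ref{rmk: C(x,y) vs B(x)}, the matrix integration-by-parts of Corollary \ref{cor: IBP matrix version}, Cauchy--Schwarz, and then a uniform-integrability argument (H\"older first on $M_n(\C)$, then on the probability space) to pass expectations of traces to the ultralimit. The only cosmetic difference is that no passage to a sub-ultrafilter is needed, since finiteness of $\lim_{n\to\cU}\mathcal{I}^{(n)}(\bX^{(n)})$ already guarantees the score function exists for $\cU$-many $n$.
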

	
	Our first step toward proving this is to observe that
	\[
	\lim_{n \to \cU} \E[\tr_n(p(\bX^{(n)},\bY^{(n)})] = \tau(p(\bX,\bY)).
	\]
	Here we use the following standard fact from probability theory.
	
	\begin{lemma} \label{lem: uniform integrability}
		Suppose $Z_n$ is a sequence of complex random variables and $Z_n$ converges in probability to a constant $c$.  If $\lim_{n \to \cU} \norm{Z_n}_{L^2} < \infty$, then $\lim_{n \to \cU} \mathbb{E}[Z_n] = c$.
	\end{lemma}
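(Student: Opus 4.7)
The plan is to adapt the standard proof that $L^2$-boundedness plus convergence in probability implies convergence of expectations, by running the argument along the ultrafilter $\cU$ rather than along $n \to \infty$. The essential split is between the event where $Z_n$ is close to $c$ (where convergence in probability takes over) and its complement (where the $L^2$ bound combined with Cauchy–Schwarz prevents large values from contributing much mass).

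Concretely, first I would fix $\epsilon > 0$ and decompose
\[
\mathbb{E}|Z_n - c| = \mathbb{E}\bigl[|Z_n - c| \mathbf{1}_{\{|Z_n-c| \le \epsilon\}}\bigr] + \mathbb{E}\bigl[|Z_n - c| \mathbf{1}_{\{|Z_n-c| > \epsilon\}}\bigr].
\]
The first summand is at most $\epsilon$. For the second, apply Cauchy–Schwarz on the underlying probability space to obtain
\[
\mathbb{E}\bigl[|Z_n - c| \mathbf{1}_{\{|Z_n-c| > \epsilon\}}\bigr] \le \norm{Z_n - c}_{L^2}\, \mathbb{P}(|Z_n - c| > \epsilon)^{1/2}.
\]
By the triangle inequality $\norm{Z_n - c}_{L^2} \le \norm{Z_n}_{L^2} + |c|$, whose ultrafilter limit is finite by hypothesis; in particular, fixing any $M$ larger than $\lim_{n \to \cU}\norm{Z_n}_{L^2} + |c|$, the set $\{n : \norm{Z_n - c}_{L^2} \le M\}$ lies in $\cU$. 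Meanwhile, convergence of $Z_n$ to $c$ in probability gives $\lim_{n \to \cU} \mathbb{P}(|Z_n - c| > \epsilon) = 0$.

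Combining these, $\lim_{n \to \cU} \mathbb{E}|Z_n - c| \le \epsilon + M \cdot 0 = \epsilon$. Since $\epsilon > 0$ was arbitrary, $\lim_{n \to \cU} \mathbb{E}|Z_n - c| = 0$, and then $|\mathbb{E}[Z_n] - c| \le \mathbb{E}|Z_n - c|$ yields $\lim_{n \to \cU} \mathbb{E}[Z_n] = c$. There is no real obstacle; the only subtlety is to phrase $L^2$-boundedness along $\cU$ in terms of a set in $\cU$ on which a uniform bound holds, so that Cauchy–Schwarz can be applied uniformly inside the ultralimit.
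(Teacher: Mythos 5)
Your proof is correct, and it takes a mildly different route from the paper's. You split $\E|Z_n-c|$ into two pieces according to whether $|Z_n-c|\le\epsilon$, and on the exceptional event you apply Cauchy--Schwarz, $\E[|Z_n-c|\mathbf{1}_{\{|Z_n-c|>\epsilon\}}]\le \norm{Z_n-c}_{L^2}\,\Prob(|Z_n-c|>\epsilon)^{1/2}$, so the bad term dies because the probability tends to $0$ along $\cU$ while the $L^2$ norm stays bounded on a set of $\cU$ (a point you correctly flag and handle). The paper instead first reduces to $c=0$ and uses a three-way truncation with two parameters $0<\delta<t$: the middle piece is bounded by $t\,\Prob(|Z_n|\ge\delta)$, which vanishes along $\cU$ by convergence in probability, and the far tail is bounded by $\norm{Z_n}_{L^2}^2/t$ via a Markov/Chebyshev-type estimate, which is killed only at the end by letting $t\to\infty$ after the ultralimit. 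Your argument is shorter, needs a single parameter, and actually gives the slightly stronger conclusion $\lim_{n\to\cU}\E|Z_n-c|=0$; its one extra bookkeeping step is extracting a set in $\cU$ on which $\norm{Z_n-c}_{L^2}\le M$ so Cauchy--Schwarz can be passed through the ultralimit. The paper's truncation argument keeps the quantity $\norm{Z_n}_{L^2}^2/t$ inside the limit (so no such extraction is needed) and is the version that would survive under weaker hypotheses in the spirit of uniform integrability, but for the $L^2$ hypothesis actually assumed, both arguments are complete and equally valid.
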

	
	\begin{proof}
		Note that $Z_n - c$ is also bounded in $L^2$ and converges to zero in probability.  Hence, we can assume without loss of generality that $c = 0$.  Note that for $0 < \delta < t$,
		\begin{align*}
			|\E Z_n| \leq \mathbb{E} |Z_n| &= \mathbb{E} \left( |Z_n| 1_{|Z_n| < \delta} \right) + \E\left( |Z_n| 1_{\delta \leq |Z_n| \leq t} \right) + \E \left( |Z_n| 1_{|Z_n| > t} \right) \\
			&\leq \delta + t P(|Z_n| \geq \delta) + \E \left( \frac{|Z_n|^2}{t} 1_{|Z_n| \geq t} \right) \\
			&\leq \delta + t P(|Z_n| \geq \delta) + \frac{\norm{Z_n}_{L^2}^2}{t}.
		\end{align*}
		Hence,
		\[
		\lim_{n \to \cU} |\E Z_n| \leq \delta + 0 + \frac{1}{t} \lim_{n \to \cU} \norm{Z_n}_{L^2}^2.
		\]
		Since $\delta$ and $t$ were arbitrary, $\lim_{n \to \cU} |\E Z_n| = 0$.
	\end{proof}
	
	\begin{proof}[Proof of Proposition \ref{prop: cond Phi* upper bound}]
		For readability, we write $\vec x = (x_1, \ldots, x_m)$ and $\vec y = (y_1, y_2, \ldots)$. We claim that for every $p \in \C \ip{\vec x, \vec y}$, we have
		\begin{equation} \label{eq: convergence of expectation 1}
			\lim_{n \to \cU} \E[\tr_n(p(\bX^{(n)},\bY^{(n)})] = \tau(p(\bX,\bY)).
		\end{equation}
		and
		\begin{equation} \label{eq: convergence of expectation 2}
			\lim_{n \to \cU} \E[\tr_n \otimes \tr_n(\partial_{x_i} p(\bX^{(n)},\bY^{(n)}))] = \tau \otimes \tau(\partial_{x_i} p(\bX,\bY)).
		\end{equation}
		By our assumptions, $\tr_n(p(\bX^{(n)},\bY^{(n)}))$ converges in probability to $\tau(p(\bX,\bY))$ and $\tr_n \otimes \tr_n(\partial_{x_i} p(\bX^{(n)},\bY^{(n)}))$ converges in probability to $\tau \otimes \tau(\partial_{x_i} p(\bX,\bY))$.  Hence, by Lemma \ref{lem: uniform integrability}, it suffices to show that $\tr_n(p(\bX^{(n)},\bY^{(n)}))$ and $\tr_n \otimes \tr_n(\partial_{x_i} p(\bX^{(n)},\bY^{(n)}))$ are bounded in $L^2$ of the probability space as $n \to \cU$.  By linearity, it suffices to prove that $\tr_n(p(\mathbf{X}^{(n)},\bY^{(n)}))$ and $\tr_n(p(\bX^{(n)},\bY^{(n)}) \tr_n(q(\bX^{(n)},\bY^{(n)}))$ are bounded in $L^2$ for monomials $p$ and $q$.  Consider a monomial $A_1^{(n)} \dots A_k^{(n)}$ where each $A_j$ is either $X_{i_j}^{(n)}$ or $Y_{i_j}^{(n)}$.  By the non-commutative H\"older's inequality (Fact \ref{fact: Holder}),
		\[
		|\tr_n(A_1^{(n)} \dots A_k^{(n)})|^2 \leq (\tr_n(|A_1^{(n)}|^k)^{2/k} \dots \tr_n(|A_k^{(n)}|^k)^{2/k})^2 \leq \tr_n((A_1^{(n)})^{2k})^{1/k} \dots \tr_n((A_k^{(n)})^{2k})^{1/k}.
		\]
		Then using the classical H\"older's inequality,
		\begin{align*}
			\E |\tr_n(A_1^{(n)} \dots A_k^{(n)})|^2 &\leq \E \left[ \tr_n((A_1^{(n)})^{2k})^{1/k} \dots \tr_n((A_k^{(n)})^{2k})^{1/k} \right] \\
			&\leq \left( \E \tr_n((A_1^{(n)})^{2k}) \right)^{1/k} \dots \left( \E \tr_n((A_k^{(n)})^{2k}) \right)^{1/k}.
		\end{align*}
		By our assumption on the moments, the limit of this quantity as $n \to \cU$ is finite.  Similarly, if $q$ is a monomial $B_1^{(n)} \cdots B_\ell^{(n)}$ where each $B_j^{(n)}$ is one of the $X^{(n)}$'s or $Y^{(n)}$'s, then
		\begin{align*}
			\E |&\tr_n(A_1^{(n)} \dots A_k^{(n)}) \tr_n(B_1^{(n)} \dots B_\ell^{(n)})|^2 \\
			&\leq \E \left[ \tr_n((A_1^{(n)})^{2k})^{1/k} \dots \tr_n((A_k^{(n)})^{2k})^{1/k} \tr_n((B_1^{(n)})^{2\ell})^{1/\ell} \dots \tr_n((B_\ell^{(n)})^{2\ell})^{1/\ell} \right] \\
			&\leq \left( \E \tr_n((A_1^{(n)})^{2k})^2 \right)^{1/2k} \dots \left( \E \tr_n((A_k^{(n)})^{2k})^2 \right)^{1/2k}  \left( \E \tr_n((B_1^{(n)})^{2\ell})^2 \right)^{1/2\ell} \dots \left( \E \tr_n((B_\ell^{(n)})^{2\ell})^2 \right)^{1/2\ell} \\
			&\leq \left( \E \tr_n((A_1^{(n)})^{4k}) \right)^{1/2k} \dots \left( \E \tr_n((A_k^{(n)})^{4k}) \right)^{1/2k}  \left( \E \tr_n((B_1^{(n)})^{4\ell}) \right)^{1/2\ell} \dots \left( \E \tr_n((B_\ell^{(n)})^{4\ell}) \right)^{1/2\ell}
		\end{align*}
		This establishes \eqref{eq: convergence of expectation 1} and \eqref{eq: convergence of expectation 2}.
		
		The main inequality (\ref{eq: cond Phi* upper bound}) that we want to prove is trivial in the case that $\lim_{n \to \cU} \frac{1}{n^4} \mathcal{I}(\bX^{(n)})) = \infty$, so assume that the limit is finite.  Then for $\cU$-many $n$ there is a score function $\Xi^{(n)}$ for $\bX^{(n)}$ as in Definition \ref{def: classical score function}.  Then by Corollary \ref{cor: IBP matrix version}, we have
		\begin{equation} \label{eqn: IBP matrix version}
			\E \scal{\Xi^{(n)}, p(\bX^{(n)},\bY^{(n)})}_{\tr_n} 
			= \sum_{j=1}^m \E \left[ n^2 \ \tr_{n} \otimes \tr_{n} \left( \partial_{x_j} p \left( \bX^{(n)}, \bY^{(n)} \right) \right) \right].
		\end{equation}
		By Lemma \ref{lemma: cond tau-char-of-phi*} and Remark \ref{rmk: C(x,y) vs B(x)},
		\[
		\Phi^*(\bX :  \bY) = \sup_{\substack{ f(x) \in \cB \scal{\vec x}^m \\ \norm{f}_2 \leq 1}} \left\{ \left| \tau \otimes \tau \left( \partial f (\bX) \right) \right|^2 \right\} = \sup_{\substack{ p(x, y) \in \C \scal{\vec x, \vec y}^m \\ \norm{p(\bX,\bY)}_2 \leq 1}} \left\{ \left| \tau \otimes \tau \left( \partial p (\bX, \bY) \right) \right|^2 \right\}.
		\]   
		Fix $p \in \C\ip{\vec x, \vec y}$ with $\norm{p(\bX,\bY)}_2 \leq 1$.  Then
		\[
		\tau \otimes \tau(\partial p(\bX,\bY)) = \lim_{n \to \cU} \mathbb{E}[\tr_n \otimes \tr_n(\partial p(\bX^{(n)},\bY^{(n)}))].
		\]
		For each $n$, by applying the integration by parts relation (\ref{eqn: IBP matrix version}) and Cauchy-Schwarz inequality,
		\begin{align*}
			\left| \E \left[ \tr_{n} \otimes \tr_{n} \left( \partial_X p \left( \bX^{(n)}, \bY^{(n)} \right) \right) \right] \right| &= \left| \frac{1}{n^2} \E \scal{\Xi^{(n)}, p(\bX^{(n)}, \bY^{(n)})}_{\tr_n} \right| \\
			&\leq \frac{1}{n^2} \ [\E \norm{p(\bX^{(n)}, \bY^{(n)})}_2^2 \  \norm{\Xi^{(n)}}_2^2]^{1/2} \\
			&= \frac{1}{n^2} \ (\E \norm{p(\bX^{(n)}, \bY^{(n)})}_2^2)^{1/2} \ \mathcal{I}(\bX^{(n)})^{1/2}.
		\end{align*}
		Thus, in the limit we obtain
		\begin{align*}
			|\tau \otimes \tau(\partial p(\bX,\bY))| &= \lim_{n \to \cU} |\mathbb{E}[\tr_n \otimes \tr_n(\partial p(\bX^{(n)},\bY^{(n)}))] | \\
			&\leq \lim_{n \to \cU} (\E \norm{p(\bX^{(n)},\bY^{(n)})}_2^2 )^{1/2} \left( \frac{1}{n^4} \mathcal{I}(\bX^{(n)}) \right)^{1/2} \\
			&= \left( \norm{p(\bX,\bY)}_2^2 \lim_{n \to \cU} \frac{1}{n^4} \mathcal{I}(\bX^{(n)}) \right)^{1/2} \\
			&\leq \left( \lim_{n \to \cU} \frac{1}{n^4} \mathcal{I}(\bX^{(n)}) \right)^{1/2}
		\end{align*}
		By squaring this equation and taking the supremum over $p$ with $\norm{p(\bX,\bY)}_2 \leq 1$, we obtain \\ 
		${\Phi^*(\bX : \bY) \leq \lim_{n \to \cU} \frac{1}{n^4} \mathcal{I}(\bX^{(n)})}$ as desired.
	\end{proof}
	
	Since we must integrate the Fisher information to obtain the entropy, we will use the following fact to exchange the ultralimit and integral (which is difficult to do in general).
	
	\begin{lemma} \label{lem: swap ultralimit and integral}
		Let $f_n: [0,T] \to [0,\infty]$ be a sequence of decreasing nonnegative functions for some fixed $0 < T < \infty$.  Let $\cU$ be a free ultrafilter on $\N$.  Then $f = \lim_{n \to \cU} f_n$ is a decreasing function (hence measurable) and $\int_0^T f \leq \lim_{n \to \cU} \int_0^T f_n$.
	\end{lemma}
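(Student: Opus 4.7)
The plan is to verify monotonicity of the limit function first, and then establish the integral inequality via Riemann lower sums that exploit the decreasing hypothesis, splitting into cases depending on whether $f$ blows up near $0$.

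First I would check that $f$ is decreasing. If $0 \leq s \leq t \leq T$, then $f_n(s) \geq f_n(t)$ for every $n$. Since ultralimits preserve inequalities between sequences in $[0,\infty]$, we get $f(s) \geq f(t)$. A decreasing function on $[0,T]$ is automatically Borel measurable (its sublevel sets are intervals), so $\int_0^T f$ makes sense as an element of $[0,\infty]$.

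For the integral inequality, the main idea is that if $0 < s_0 < s_1 < \dots < s_N = T$ is any partition and each $f_n$ is decreasing, then $f_n(t) \geq f_n(s_{k+1})$ on $[s_k, s_{k+1}]$, so
\[
\int_0^T f_n \;\geq\; \int_{s_0}^T f_n \;\geq\; \sum_{k=0}^{N-1} (s_{k+1} - s_k) f_n(s_{k+1}).
\]
Passing to the ultralimit on both sides (the right-hand side being a finite sum, ultralimits commute with it) yields
\[
\lim_{n \to \cU} \int_0^T f_n \;\geq\; \sum_{k=0}^{N-1} (s_{k+1} - s_k) f(s_{k+1}).
\]
Now I would separate two cases. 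If $f(t_0) = +\infty$ for some $t_0 > 0$, then choosing any partition with $s_1 = t_0$ makes the right-hand side infinite (since $f$ is decreasing, $f(s_{k+1}) = \infty$ for $s_{k+1} \leq t_0$), and the claimed inequality becomes trivial. Otherwise $f$ is finite on $(0,T]$, hence bounded by $f(\epsilon) < \infty$ on each interval $[\epsilon, T]$ with $\epsilon > 0$, so $f$ is Riemann integrable on $[\epsilon, T]$. Fixing $\epsilon \in (0,T)$ and taking partitions of $[\epsilon, T]$ with mesh tending to zero, the right endpoint Riemann sums $\sum_k (s_{k+1} - s_k) f(s_{k+1})$ converge up to $\int_\epsilon^T f$. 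Combined with the inequality above, this gives
\[
\lim_{n \to \cU} \int_0^T f_n \;\geq\; \int_\epsilon^T f.
\]

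Finally, I would let $\epsilon \to 0^+$ and invoke monotone convergence (the integrands $f \cdot \mathbf{1}_{[\epsilon,T]}$ increase pointwise to $f \cdot \mathbf{1}_{(0,T]}$, whose integral equals $\int_0^T f$ because $\{0\}$ is Lebesgue null) to conclude $\lim_{n \to \cU} \int_0^T f_n \geq \int_0^T f$. The only subtlety worth watching is that ultralimits do not satisfy Fatou's lemma in general, so the argument relies crucially on the monotonicity hypothesis to bound $\int f_n$ below by a finite sum of pointwise values, which is the one operation that does commute with the ultralimit; no step of the argument should present any real obstacle beyond the case split at $t = 0$.
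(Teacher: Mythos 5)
Your proof is correct and takes essentially the same route as the paper: both bound $\int_0^T f_n$ from below by right-endpoint Riemann sums (which is where the decreasing hypothesis enters), use that ultralimits commute with finitely many pointwise evaluations, and then approximate $\int_0^T f$ from below by such sums. The only cosmetic difference is the handling of possible infinite values of $f$ --- you split into cases and let $\epsilon \to 0$ with monotone convergence, while the paper truncates $f$ at level $2^k$ via the step functions $g_k$; both devices work.
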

	
	\begin{proof}
		Let
		\[
		g_k(t) = \sum_{j=1}^{2^k} \min\left( f \left( \frac{Tj}{2^k} \right), 2^k \right) \mathbf{1}_{[T(j-1)/2^k,Tj/2^k)}(t).
		\]
		Then $g_k \leq g_{k+1} \leq 2^{k+1}$ and $\lim_{k \to \infty} g_k = f$ almost everywhere (specifically, at all points of continuity of $f$, which is cocountable because $f$ is monotone).  Hence, $\int_0^T f = \sup_k \int_0^T g_k$.  Fix $k \in \N$ and $\epsilon > 0$.  Since $f(t) = \lim_{n \to \cU} f_n(t)$, there is a set $A \in \mathcal{U}$ such that
		\[
		n \in A \implies f_n \left( \frac{Tj}{2^k} \right) \geq \min\left( f \left( \frac{Tj}{2^k} \right), 2^k \right) - \epsilon \text{ for } j = 1, \dots, 2^k.
		\]
		Since $f_n$ is decreasing, $\int_0^T f_n$ is bounded by below by any right-endpoint Riemann sum, so for $n \in A$,
		\[
		\int_0^T f_n(t)\,dt \geq \frac{T}{2^k} \sum_{j=1}^{2^k} f_n \left( \frac{Tj}{2^k} \right) \geq \frac{T}{2^k} \sum_{j=1}^{2k} \left( \min\left( f_n \left( \frac{Tj}{2^k} \right), 2^k \right) - \epsilon \right) = \int_0^T g_k(t)\,dt - T \epsilon. 
		\]
		Since $\epsilon$ and $k$ were arbitrary, $\lim_{n \to \cU} \int_0^T f_n \geq \sup_k \int_0^T g_k = \int_0^T f$.
	\end{proof}

	\begin{theorem} \label{thm: cond main}
		Let $\bX = (X_1, \ldots, X_m)$ be an $m$-tuple of self-adjoint elements in $(\cM, \tau)$, let $\cB$ be a separable von Neumann subalgebra, and fix a countable tuple of generators $\bY$ for $\mathcal{B}$. Fix a sequence $\bY^{(n)} \in M_n(\C)^\N_{\sa}$ converging in non-commutative law to $\bY$. Then for any ultrafilter $\cU$ on $\N$:
		\begin{equation} \label{eq: first main theorem}
			\chi^\cU(\bX \mid  \bY^{(n)} \rightsquigarrow \bY) \leq \chi^*(\bX :  \cB).
		\end{equation}
		Hence, by taking the supremum over all sequences $\bY^{(n)} \rightsquigarrow \bY$, and the supremum over ultrafilters,
		\begin{equation} \label{eq: second main theorem}
			\chi^{\cU}(\bX \mid \cB) \leq \chi^*(\bX : \cB), \qquad \overline{\chi}(\bX \mid \cB) \leq \chi^*(\bX : \cB).
		\end{equation}
	\end{theorem}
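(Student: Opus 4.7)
The plan is to combine the random matrix characterization of $\chi^\cU(\bX \mid \bY^{(n)} \rightsquigarrow \bY)$ in Theorem \ref{thm: rand mx interpretation of chi^U(X | Y)} with the classical integral formula \eqref{eqn: recover conditional entropy from conditional I()} and the Fisher information bound of Proposition \ref{prop: cond Phi* upper bound}, using Lemma \ref{lem: swap ultralimit and integral} to exchange the ultralimit with an integral. By Theorem \ref{thm: rand mx interpretation of chi^U(X | Y)} and Remark \ref{rem: uniformly bounded random matrix models}, it suffices to fix a random matrix model $\bX^{(n)} \in M_n(\C)_{\sa}^m$ that is uniformly bounded in operator norm and for which the joint law with $\bY^{(n)}$ converges in probability to $(\bX, \bY)$, and to show that $\lim_{n \to \cU} h^{(n)}(\bX^{(n)}) \leq \chi^*(\bX : \cB)$; the conclusion \eqref{eq: second main theorem} will then follow from Lemma \ref{lem: Shl-sup}.

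To bring in free Fisher information, I introduce an independent GUE $m$-tuple $\bS^{(n)}$ on the same probability space, and let $\bS$ be a standard semicircular $m$-tuple free from $(\bX, \bY)$ in the limit. By Theorem \ref{thm: asymptotic free independence}, for each $t > 0$ the joint tuple $(\bX^{(n)} + t^{1/2}\bS^{(n)}, \bY^{(n)})$ converges in non-commutative law, in probability, to $(\bX + t^{1/2}\bS, \bY)$, with uniformly bounded moments in $n$. Proposition \ref{prop: cond Phi* upper bound} then yields, for each $t > 0$,
\[
\Phi^*(\bX + t^{1/2}\bS : \bY) \leq \lim_{n \to \cU} \mathcal{I}^{(n)}(\bX^{(n)} + t^{1/2}\bS^{(n)}).
\]

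Because $\bY^{(n)}$ is deterministic, the conditional and unconditional entropies of $\bX^{(n)}$ coincide, so the integral formula \eqref{eqn: recover conditional entropy from conditional I()} together with Definition \ref{def: relative nm free ent} gives
\[
\chi^*(\bX : \cB) - h^{(n)}(\bX^{(n)}) = \frac{1}{2}\int_0^\infty \left[\mathcal{I}^{(n)}(\bX^{(n)} + t^{1/2}\bS^{(n)}) - \Phi^*(\bX + t^{1/2}\bS : \bY)\right] dt.
\]
For any $T > 0$, the function $t \mapsto \mathcal{I}^{(n)}(\bX^{(n)} + t^{1/2}\bS^{(n)})$ is nonnegative and decreasing on $[0,T]$ by Corollary \ref{cor: Fisher info decreasing}, so Lemma \ref{lem: swap ultralimit and integral} combined with the pointwise bound above gives
\[
\lim_{n \to \cU} \int_0^T \left[\mathcal{I}^{(n)}(\bX^{(n)} + t^{1/2}\bS^{(n)}) - \Phi^*(\bX + t^{1/2}\bS : \bY)\right] dt \geq 0.
\]

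To finish, I control the tail on $[T, \infty)$ uniformly in $n$. By Stam's inequality (Lemma \ref{lem: Fisher info convolution}) we have $\mathcal{I}^{(n)}(\bX^{(n)} + t^{1/2}\bS^{(n)}) \leq m/t$, while a Cram\'er-Rao type lower bound yields $\mathcal{I}^{(n)}(\bX^{(n)} + t^{1/2}\bS^{(n)}) \geq m^2/(V_0 + tm)$, where $V_0$ is a uniform bound on $\E \norm{\bX^{(n)}}_2^2$ coming from the operator-norm bound. Together these imply $|m/(1+t) - \mathcal{I}^{(n)}(\bX^{(n)} + t^{1/2}\bS^{(n)})| = O(1/t^2)$ uniformly in $n$; the analogous free Stam and free Cram\'er-Rao estimates give the same $O(1/t^2)$ bound for $|m/(1+t) - \Phi^*(\bX + t^{1/2}\bS : \bY)|$. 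Hence the tail contributes $O(1/T)$, and letting $T \to \infty$ yields $\lim_{n \to \cU} h^{(n)}(\bX^{(n)}) \leq \chi^*(\bX : \cB)$. The main technical obstacle is precisely this uniform tail control: the two individual integrals $\int_0^\infty m/(1+t)\,dt$ and $\int_0^\infty \mathcal{I}^{(n)} dt$ both diverge, so one must bound the integrand of their difference pointwise in $t$, which is what makes the Cram\'er-Rao lower bound (and hence the uniform second moment bound on $\bX^{(n)}$ furnished by the operator-norm hypothesis) indispensable.
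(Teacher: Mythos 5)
Your proposal is correct in its overall architecture and in fact coincides with the paper's proof through the key steps: fix a uniformly operator-norm-bounded model realizing $\chi^{\cU}(\bX\mid\bY^{(n)}\rightsquigarrow\bY)$ via Theorem \ref{thm: rand mx interpretation of chi^U(X | Y)} and Remark \ref{rem: uniformly bounded random matrix models}, perturb by GUE, apply Proposition \ref{prop: cond Phi* upper bound} pointwise in $t$, and swap ultralimit and integral on $[0,T]$ via Lemma \ref{lem: swap ultralimit and integral} and Corollary \ref{cor: Fisher info decreasing}. Where you genuinely diverge is the endgame: you use the full integral formula \eqref{eqn: recover conditional entropy from conditional I()} and control the tail $[T,\infty)$ uniformly in $n$ by classical and free Stam and Cram\'er--Rao inequalities, whereas the paper never needs any lower bound on Fisher information: it uses only the finite-interval increment \eqref{eqn: increments of h is integral of I()}, rearranges so that the large-$t$ contribution is absorbed into $h^{(n)}(\bX^{(n)}+t^{1/2}\bS^{(n)})$, bounds that by the Gaussian maximum-entropy estimate (Fact \ref{fact: entropy controlled by var}), and only then lets $t\to\infty$ against the $\frac m2\log t$ asymptotics built into the definition of $\chi^*$. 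Your route works, but note two costs: (i) the free Stam and free Cram\'er--Rao inequalities (and the classical Cram\'er--Rao bound in your normalization) are true but are not established anywhere in the paper, so they must be imported from Voiculescu's work and checked against the $\tr_n$-normalization conventions of \S\ref{subsec: normalization}; (ii) the displayed identity $\chi^*(\bX:\cB)-h^{(n)}(\bX^{(n)})=\frac12\int_0^\infty[\mathcal{I}^{(n)}-\Phi^*]\,dt$ combines two improper integrals whose integrands are each only $O(1/t)$ near $t=0$ and which may individually diverge to $-\infty$, so as written it risks an $\infty-\infty$; this is easily repaired by keeping the two formulas separate on $[0,T]$ (where $\int_0^T\mathcal{I}^{(n)}$ is finite by \eqref{eqn: increments of h is integral of I()}) and only merging the tails on $[T,\infty)$, where your $O(1/t^2)$ bounds give absolute integrability. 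The paper's rearrangement buys a proof that is self-contained given the facts already stated; your version buys a cleaner conceptual statement (the entropy deficit is exactly half the integrated Fisher information deficit) at the price of extra external inputs and the integrability bookkeeping.
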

	
	\begin{proof}
		First, note that if $\chi^{\cU}(\bX \mid \cB) = -\infty$, there is nothing to prove.  Otherwise, by Theorem \ref{thm: rand mx interpretation of chi^U(X | Y)} and Remark \ref{rem: uniformly bounded random matrix models}, there is some $\bR \in (0, \infty)^m$ and a sequence $\bX^{(n)}$ of random variables in $M_{n}(\C)^{m}_{\sa}$ such that:
		\begin{itemize}
			\item $\norm{\bX^{(n)}}_{\operatorname{op}} \leq \bR$ for all $j$ and $n$, 
			\item $(\bX^{(n)}, \bY^{(n)})$ converges to $(\bX, \bY)$ in non-commutative law, and
			\item $ \displaystyle
			\chi^{\cU}(\mathbf{X} \mid \mathbf{Y}^{(n)} \rightsquigarrow \mathbf{Y}) = \lim_{n \to \cU} \left( \frac{1}{n^2} h(\bX^{(n)}) + m \log n \right) = \lim_{n \to \cU} h^{(n)}(\bX^{(n)}). $
		\end{itemize}

		Let $\mathbf{S} = (S_1, S_2, \ldots, S_m)$ be a tuple of freely independent semicircular random variables, also freely independent from $\bX$ and $\bY$, in some larger tracial von Neumann algebra $\tilde{\cM}$ containing $\cM$. 
		Also, let $\mathbf{S}^{(n)} \in M_{n}(\C)^m_{\sa}$ be a sequence of $m$-tuples of independent GUE random matrices.  We claim that for each $u > 0$, the random matrices $\bX^{(n)} + u^{1/2} \mathbf{S}^{(n)}$ and $\mathbf{Y}^{(n)}$ satisfy the hypotheses of Proposition \ref{prop: cond Phi* upper bound}.  Indeed, using Theorem \ref{thm: asymptotic free independence} (b), $(\mathbf{S}^{(n)},\bX^{(n)},\mathbf{Y}^{(n)})$ converges in non-commutative law almost surely to $(\mathbf{S},\mathbf{X},\mathbf{Y})$. 
		Since any polynomial in $(\mathbf{X} + u^{1/2} \mathbf{S},\mathbf{Y})$ is also a polynomial in $(\mathbf{S},\mathbf{X},\mathbf{Y})$, this implies that $(\bX^{(n)} + u^{1/2} \mathbf{S}^{(n)},\mathbf{Y}^{(n)})$ converges in non-commutative law almost surely to $(\mathbf{X} + u^{1/2} \mathbf{S},\mathbf{Y})$ .  
		Furthermore, using Theorem \ref{thm: asymptotic free independence} (a) and the fact that $\norm{\bX^{(n)}}_{\operatorname{op}} \leq \bR$, we have $\lim_{n \to \cU} \E \norm{X_j^{(n)} + u^{1/2} S_j^{(n)}}_{\operatorname{op}}^k < \infty$ for each $k$.  This implies $\lim_{n \to \cU} \E \tr_n[(X_j^{(n)} + u^{1/2} S_j^{(n)})^{2k}] < \infty$.  Therefore, we can apply Proposition \ref{prop: cond Phi* upper bound} to $(\bX^{(n)} + u^{1/2} \mathbf{S}^{(n)},\mathbf{Y}^{(n)})$ to obtain
		\[
		\Phi^*(\bX + u^{1/2} \mathbf{S} :  \bY ) \leq \lim_{n \rightarrow \cU} \frac{1}{n^4} \  \mathcal{I}(\bX^{(n)} + u^{1/2} \mathbf{S}^{(n)}).
		\]
		
		By Corollary \ref{cor: Fisher info decreasing}, the function $\mathcal{I}(\bX^{(n)} + u^{1/2} \mathbf{S}^{(n)})$ is decreasing in $u$. By restricting to a bounded interval $[0,t]$, integrating over $u$ in this interval, and applying Lemma \ref{lem: swap ultralimit and integral}, we obtain
		\begin{align*}
			\frac{1}{2} \int_0^t \Phi^*(\bX + u^{1/2} \mathbf{S}  :  \bY ) \, du &\leq \lim_{n \rightarrow \cU} \frac{1}{2 n^4} \int_0^t \mathcal{I}^{(n)}(\bX^{(n)} + u^{1/2} \mathbf{S}^{(n)} ) \, du \\
			&= \lim_{n \to \cU} \frac{1}{n^4} \left(h^{(n)}(\bX^{(n)} + t^{1/2} \mathbf{S}^{(n)}) - h^{(n)}(\bX^{(n)}) \right),
		\end{align*}
		where the last equality follows from \eqref{eqn: increments of h is integral of I()}.  Rearranging this to isolate the term approximating ${\chi^{\cU}(\mathbf{X} \mid \cB)}$, we write
		\begin{align*}
			\chi^{\cU}(\mathbf{X} \mid \mathbf{Y}^{(n)} \rightsquigarrow \mathbf{Y}) &= \lim_{n \to \cU}  h^{(n)}(\bX^{(n)})  \\
			&\leq \lim_{n \to \cU} h^{(n)}(\mathbf{X}^{(n)} + t^{1/2} \mathbf{S}^{(n)}) - \frac{1}{2} \int_0^t \Phi^*(\bX + u^{1/2} \mathbf{S} :  \mathcal{B}) \, du.
		\end{align*}
		We may bound the first term on the right-hand side by applying Fact \ref{fact: entropy controlled by var} to $\bX^{(n)} + t^{1/2} \mathbf{S}^{(n)}$: 
		\begin{align*}
			h^{(n)}(\bX^{(n)} + t^{1/2} \mathbf{S}^{(n)}) &= \frac{1}{n^2} h(\bX^{(n)} + t^{1/2} \mathbf{S}^{(n)}) + m \log n \\
			&\leq \frac{m}{2} \left( \log(\E \norm{\bX^{(n)}}_2^2 + m \cdot t) + \log\left( \frac{2\pi e}{m n^2} \right) + 2 \log n \right) \\
			&= \frac{m}{2} \log  \left( \frac{(\E \norm{\bX^{(n)}}_2^2 + m \cdot t)(2\pi e)(n^2)}{m n^2} \right) \\
			&\leq \frac{m}{2} \log \left( C + t \right) + \frac{m}{2} \log(2\pi e),
		\end{align*}
		where $C := \frac{1}{m} \sup_n \E \norm{\mathbf{X}^{(n)}}_2^2 < \infty$.  Therefore, 
		\begin{align*}
			\chi^{\cU}(\mathbf{X} \mid \mathbf{Y}^{(n)} \rightsquigarrow \mathbf{Y}) &\leq \frac{m}{2} \log \left( C + t \right) + \frac{m}{2} \log(2\pi e) - \frac{1}{2} \int_0^t \Phi^*(\bX + u^{1/2} \mathbf{S} :  \mathcal{B}) \, du \\
			&\leq \frac{m}{2} \log( C + t) - \frac{m}{2} \log(t) + \frac{1}{2} \int_0^t \left( \frac{m}{1+u} - \Phi^*(\bX + u^{1/2} \mathbf{S} :  \mathcal{B}) \right)\,du + \frac{m}{2} \log(2\pi e).
		\end{align*}
		Taking $t \to \infty$, we obtain the desired inequality $\chi^{\cU}(\mathbf{X} \mid \mathbf{Y}^{(n)} \rightsquigarrow \mathbf{Y}) \leq \chi^*(\mathbf{X} : \cB)$ in \eqref{eq: first main theorem}.  Then \eqref{eq: second main theorem} follows from Definition \ref{def: conditional entropy embedding}, Definition \ref{def: conditional entropy ultrafilter}, and Lemma \ref{lem: Shl-sup}.
	\end{proof}

	\bibliographystyle{plain}
	\bibliography{free-entropy-inequality}
	
\end{document}